\newcommand{\be}{\begin{equation}}
\newcommand{\ee}{\end{equation}}
\newcommand{\bp}{\begin{proof}}
\newcommand{\ep}{\end{proof}}
\newcommand{\bel}{\begin{equation}\label}
\newcommand{\eeq}{\end{equation}}
\newcommand{\bea}{\begin{eqnarray}}
\newcommand{\eea}{\end{eqnarray}}
\newcommand{\bee}{\begin{eqnarray*}}
\newcommand{\eee}{\end{eqnarray*}}
\newcommand{\ben}{\begin{enumerate}}
\newcommand{\een}{\end{enumerate}}
\newcommand{\R}{\mathbb R}
\newcommand{\Z}{\mathbb Z}
\newcommand{\lop}{\mathcal{L}(\partial_x)}
\newcommand{\pop}{p(\partial_x)}
\newcommand{\japa}{\langle x\rangle}
\newcommand{\sgn}{{\rm sgn}}
\newcommand{\T}{\mathbb{T}}
\newcommand{\sign}{{\rm sgn}}
\newcommand{\ji}{\langle}
\newcommand{\jd}{\rangle}
\newtheorem{theorem}{Theorem}[section]
\newtheorem*{theorem-a}{Theorem A}
\newtheorem*{theorem-b}{Theorem B}
\newtheorem{lemma}[theorem]{Lemma}
\newtheorem{corollary}[theorem]{Corollary}
\theoremstyle{remark}
\newtheorem{remark}{Remark}[section]
\theoremstyle{definition}
\numberwithin{equation}{section}
\title[On properties of solutions to the ILW equation]{On decay and asymptotic properties of solutions to the Intermediate Long Wave equation}
\author{Felipe Linares}
\address[F. Linares]{IMPA\\
Instituto Matem\'atica Pura e Aplicada\\
Estrada Dona Castorina 110\\
22460-320, Rio de Janeiro, RJ\\Brazil}
\email{linares@impa.br}
\author{Gustavo Ponce}
\address[G. Ponce]{Department  of Mathematics\\
University of California\\
Santa Barbara, CA 93106\\
USA.}
\email{ponce@math.ucsb.edu}
\keywords{intermediate long wave equation, decay estimates, asymptotic dynamics}
\subjclass{35Q53, 35B40}
\begin{abstract} 
%In this 
%\end{abstract}
\begin{document}
%%%%%%%%%%%%%%%%%%
%Author w_Nformatiw_N
%%%%%%%%%%%%%%%%%%%%
%%%%%%%%%%%%%%
%\begin{abstract} 
%\end{abstract}

%\maketitle

\begin{abstract} 
We consider solutions to the initial value problem associated to the intermediate long wave (ILW) equation. We establish persistence properties of the solution flow in weighted Sobolev spaces, and  show that they  are sharp. We also deal with the long time 
dynamics of large solutions to the ILW equation. Using virial techniques, we describe regions of space 
where the energy of the  solution must decay to zero along sequences of times. Moreover, in the case of exterior regions, we 
prove complete decay for any sequence of times. The remaining regions not treated here are essentially the strong dispersion and soliton regions.
\end{abstract}

\maketitle

\section{Introduction}

This work is concerned with  the Intermediate Long Wave (ILW) equation 
\be\label{ILW} 
\begin{aligned}
 \partial_t u + \mathcal T_\delta \partial_x^2  u + \frac1{\delta}\partial_x u  + u\partial_x u = 0,\qquad (x,t) \in & ~ \R\times \R.
\end{aligned}
\ee
where $u=u(t,x)$ is a real-valued function, 
\be\label{T}
\mathcal T_\delta f(x) := -\, \frac1{2\delta} \,\hbox{p.v.} \int \coth\left(\frac{\pi(x-y)}{2\delta}\right)f(y)dy,
\ee
and $\delta>0$ is a parameter.

Thus, $\mathcal T_\delta$ is a Fourier multiplier of order zero, in the sense that $\partial_x\mathcal T_\delta$ is the multiplier with symbol
\be\label{1.3} 
\sigma(\partial_x\mathcal T_\delta)=\widehat{\partial_x\mathcal T_\delta} (\xi)= - 2\pi \xi \,\hbox{coth}\,(2\pi \delta \xi),
\ee
so
\be\label{1.3b} 
\sigma\Big(\partial_x^2 \mathcal T_{\delta}+\frac{\partial_x}{\delta}\Big)=-i\Big((2\pi\xi)^2\coth(2\pi\delta \xi)-\frac{2\pi\xi}{\delta}\Big):=-i \Omega_{\delta}(2\pi\xi).
\ee

The  ILW equation \eqref{ILW} describes  long internal gravity waves in a two layer stratified fluid, the lower layer having a large finite depth represented by the parameter $\,\delta$, see 
 \cite{Jo}, \cite{KKD}, \cite{JE}, \cite{CL}, \cite{SAK},  \cite{BLS}, and  \cite{CGK} for formal and rigorous derivations of the model.
 
 \medskip
 
In \cite{ABFS}, it was proven that solutions $u(x,t)=u_{\delta}(x,t)$ of the ILW  equation converge, as $\delta \to \infty$ (infinite depth limit), to solutions of the Benjamin-Ono (BO) equation, see \cite{Be}, \cite{On},
 \be\label{BO} 
\partial_t u + \mathcal H\partial_x^2  u   + u\partial_x u = 0,
\ee
with the same initial data. In \eqref{BO}, $\mathcal H$ stands for the Hilbert transform
\begin{equation}
\label{hita}
\begin{aligned}
%\begin{split}
\mathcal H f(x) :=&~ \frac{1}{\pi} {\rm p.v.}\Big(\frac{1}{x}\ast f\Big)(x)
\\
:=& ~ \frac{1}{\pi}\lim_{\epsilon\downarrow 0}\int\limits_{|y|\ge \epsilon} \frac{f(x-y)}{y}\,dy=(-i\,\sgn(\xi) \widehat{f}(\xi))^{\vee}(x).
%\end{split}
\end{aligned}
\end{equation}

Also, in \cite{ABFS} it was shown that  if $u_{\delta}(x,t)$ denotes the solution of the ILW equation \eqref{ILW}, then
\be
\label{scaleKdV}
v_{\delta}(x,t)=\,\frac{3}{\delta} \,u_{\delta}\left(x,\frac{3}{\delta} t\right)
\ee
converges  as $\delta\to 0$ (shallow-water limit) to the solution  of the Korteweg-de Vries (KdV) equation \cite{KdV}
\be\label{KdV} 
\partial_t u + \partial_x^3  u  + u\partial_x u = 0,
\ee
with the same initial data. For recent results in this regard see also \cite{MoVe} and \cite{MPV}.

\medskip

The ILW equation is known to be completely integrable.  The Inverse Scattering formalism was considered in \cite{KAS}, \cite{KSA}, where one finds the Lax pair of the ILW equation, but  no rigorous theory for solving the Cauchy problem by this method is known (see   \cite{KPW} for recent progress on the direct scattering problem). For further comments on general properties of the ILW equation we refer to \cite{JE}, \cite{KAS}, \cite{KKD}, \cite{SAF}, \cite{KPV19} and a recent survey \cite{Sa}.

\medskip
The ILW equation have been also  obtained as a one-dimensional,  uni-directional reduction of a class of ILW {\it systems} derived in \cite{BLS}, \cite{CGK}, see also  \cite{X}.

\medskip
Formally,  real solutions of \eqref{ILW} satisfy  infinitely many conservation laws, due to its integrability, see  \cite{LR}, \cite{Mat3}.  The first three are the following  ones:
\begin{equation}
\label{CL}
\begin{aligned}
I_1(u)&:=\int u(x,t)dx=I_1(u_0),\quad \;\;\;I_2(u):=\int u^2(x,t)dx=I_2(u_0),\\
I_3(u)&:=\int\left( u\mathcal T_\delta \partial_x u +\frac1{\delta} u^2 -\frac13u^3\right)(x,t)dx=I_3(u_0)\\
&~ \qquad  \text{(conservation of the Hamiltonian)}.
\end{aligned}
\end{equation}

The fourth  invariant controls the $H^1$-norm of the solution and is given by

\begin{equation}\begin{split} 
I_4(u)=   &\int_{-\infty}^\infty \Bigg( \frac{1}{4}u^4+\frac{3}{2}u^2\mathcal T_\delta(u_x)+\frac{1}{2}u_x^2+\frac{3}{2}[\mathcal T_\delta(u_x)]^2\\
&\qquad \quad +\frac{1}{\delta} \left( \frac{3}{2}u^3+\frac{9}{2}u\mathcal T_\delta(u_x) \right) +\frac{3}{2\delta^2} u^2\Bigg)dx.
\end{split}
\end{equation}

In general, the $I_k$ invariant, with $k\geq 2$, controls the $H^{(k-2)/2}$-norm of the solution. This is also the case for the BO equation, see \cite{Sa}. For the general structure of the invariants see, for instance, \cite{ABFS}.

It is known that  \eqref{ILW} possesses  soliton (or solitary wave) solutions of the form \cite{Jo}
\be\label{Soliton}
u(x,t)= Q_{\delta,c}(x-ct),  \quad c>\frac1{\delta},
\ee
where $Q_{\delta,c}$  solves
\begin{equation}
\label{solitons}
%\partial_x\left(
 \partial_x\mathcal T_\delta  Q_{\delta,c}  +\left( \frac1{\delta} -c\right) Q_{\delta,c}  + \frac12 Q_{\delta,c}^2=0.
 %\right) =0.
\end{equation}
In particular,  $Q_{\delta,c}$ is exponentially localized in space. For the explicit form of $Q_{\delta,c}$, uniqueness, and its stability we refer to 
\cite{A}, \cite{AT}.

The  well-posedness of the initial value problem (IVP) associated to the ILW equation \eqref{ILW} has been studied in 
several works. Using the classical Sobolev spaces $H^s(\R)=(1-\partial^2_x)^{-s/2}L^2(\R)$, it was shown in \cite{ABFS}
 that this IVP is globally well posed for index $s>3/2$. In \cite{MoVe}, this result was extended to $s\geq 1/2$, and latter
  in \cite{MPV} to $s>1/4$, see also \cite{BuPl}.  More recently, there has been a great deal of activity regarding well-posedness in  low regularity spaces. In \cite{IfSa} it was established that the IVP  associated to the ILW equation
\eqref{ILW} is globally well posed in $H^s(\R)$ with $s\geq 0$. In addition, in \cite{IfSa} for small localized data some dispersive estimate were obtained.  Using a different approach in \cite{CLOP} local and global well-posedness was proved for the IVP associated to the ILW in $H^s(\R)$ as well as in $H^s(\T)$, $s\ge 0$. In particular,  the convergence of the ILW dynamics to the BO dynamics in the deep-water limit was proved at the $L^2$-level. For additional well/ill-posedness issues for the IVP associated to \eqref{ILW} see \cite{CFLOP}. 

In the rest of this work, the parameter $\delta$ will be a fixed  strictly positive value. In order to simplify the exposition, in  the proofs of our results we shall take $\delta=1$.

Our first result addresses the persistence properties of solutions of the IVP associated to \eqref{ILW} in polynomial weighted Sobolev spaces:

\begin{theorem}\label{decay}
Let $s\ge \frac12$. Let $u\in C([-T, T] : H^s(\R))$ be a solution to the IVP associated to \eqref{ILW}
obtained in \cite{IfSa}. If $\;|x|^{\alpha}u_0\in L^2(\R)$ with $0\le \alpha\le s$, then
\begin{equation}\label{decay-1}
|x|^{\alpha}u\in C([-T, T] : L^2(\R)).
\end{equation}
\end{theorem}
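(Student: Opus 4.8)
The plan is to run an energy estimate for the weighted quantity $\|\japa^{\alpha} u(t)\|_{L^2}$, where $\japa = (1+x^2)^{1/2}$, using a standard truncation-and-limiting argument to justify the formal computation. First I would introduce, for $N\gg 1$, a smooth bounded weight $w_N(x)$ that behaves like $\japa^{\alpha}$ for $|x|\le N$, is constant for $|x|\ge 2N$, satisfies $0\le w_N'\le C\,w_N^{1-1/\alpha}$ (or the analogous inequality adapted to $\alpha$), and increases monotonically to $\japa^{\alpha}$ as $N\to\infty$. Multiplying \eqref{ILW} by $w_N^2 u$ and integrating, the time derivative of $\tfrac12\|w_N u\|_{L^2}^2$ splits into three contributions: the dispersive term $\int w_N^2 u\,(\mathcal T_\delta\partial_x^2 u + \tfrac1\delta\partial_x u)\,dx$, the transport term $\tfrac1\delta$-free part already folded in, and the nonlinear term $\int w_N^2 u\,(u\partial_x u)\,dx$. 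The nonlinear term is the easy one: integrating by parts it becomes $-\tfrac13\int (w_N^2)'\,u^3\,dx$, which by the pointwise bound on $w_N'$ and $w_N$ and by $H^s\hookrightarrow L^\infty$ for $s>1/2$ (and an $L^3$ interpolation at the endpoint $s=1/2$ using $\|u\|_{L^3}\lesssim\|u\|_{L^2}^{2/3}\|u\|_{H^{1/6}}^{1/3}$) is controlled by $\|u\|_{H^s}\,\|w_N u\|_{L^2}^2$, hence by Gronwall it is harmless.

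The heart of the matter is the commutator arising from the nonlocal dispersive operator. Writing $\mathcal L := \partial_x^2\mathcal T_\delta + \tfrac1\delta\partial_x$, which is skew-adjoint with symbol $-i\Omega_\delta(2\pi\xi)$ as in \eqref{1.3b}, the dispersive contribution to $\tfrac{d}{dt}\tfrac12\|w_N u\|_{L^2}^2$ is $\int w_N u\,[\,w_N,\mathcal L\,]u\,dx + \tfrac12\int (\text{skew part})$, and the skew part vanishes, so everything reduces to estimating the commutator $[w_N,\mathcal L]$. Here the key structural fact is that $\mathcal L = \mathcal H\partial_x^2 + \text{(lower order)}$ up to the order-zero correction $\mathcal T_\delta + \mathcal H$ which, crucially, has a smooth rapidly decaying kernel (the difference $\coth(\pi z/2\delta) - \tfrac{2\delta}{\pi z}\cdot\tfrac1{?}$... more precisely $\mathcal T_\delta$ and $-\mathcal H$ differ by a convolution with an integrable, exponentially decaying, smooth kernel). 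Thus I would split $\mathcal L = \mathcal H\partial_x^2 + \tfrac1\delta\partial_x + \partial_x^2(\mathcal T_\delta + \mathcal H)$; the last two summands are essentially of order $\le 1$ in the dispersive sense but their commutators with a weight $\sim\japa^\alpha$ with $\alpha\le s$ cost one derivative, landing in $H^{s-1}$, which together with the weight is still controlled by $\|w_N u\|_{L^2} + \|u\|_{H^s}$; and for the leading piece $\mathcal H\partial_x^2$ one uses the Calderón-type commutator estimate $\|[\japa^\alpha,\mathcal H]\partial_x f\|_{L^2}\lesssim \|\japa^{\alpha-1}\partial_x f\|_{?}$ — more robustly, one writes $[w_N,\mathcal H\partial_x^2]u = [w_N,\mathcal H]\partial_x^2 u + \mathcal H[w_N,\partial_x^2]u$ and uses that $[w_N,\mathcal H]$ gains essentially one full derivative (this is the standard fact, e.g. from the BO weighted theory, that $\|[\japa^{\beta},\mathcal H]\partial_x g\|_{L^2}\lesssim \|g\|_{H^{\beta}}$-type bounds hold for $0\le\beta\le 1$ and are iterated for larger $\beta$), so that after all reductions the dispersive contribution is bounded by $C(\|u\|_{H^s})\big(\|w_N u\|_{L^2}^2 + 1\big)$, uniformly in $N$.

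Granting the commutator estimate, the argument closes as follows: for each fixed $N$, $\tfrac{d}{dt}\|w_N u(t)\|_{L^2}^2 \le C(\sup_{[-T,T]}\|u\|_{H^s})\big(\|w_N u(t)\|_{L^2}^2 + 1\big)$, and since $\|w_N u_0\|_{L^2} \le \||x|^{\alpha}u_0\|_{L^2} + \|u_0\|_{L^2}<\infty$ uniformly in $N$, Gronwall gives $\sup_{[-T,T]}\|w_N u(t)\|_{L^2}\le C_T$ with $C_T$ independent of $N$. Letting $N\to\infty$ and invoking monotone convergence yields $\sup_{[-T,T]}\||x|^{\alpha}u(t)\|_{L^2}<\infty$, and the continuity in $t$ of $t\mapsto \japa^\alpha u(t)\in L^2$ follows from the continuity of $t\mapsto u(t)\in H^s$, the uniform bound, and a standard weak-to-strong continuity upgrade (the map is weakly continuous by the equation, norm-continuous by dominated convergence on the uniform bound, hence strongly continuous). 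The main obstacle is unquestionably the sharp commutator estimate for $[\japa^\alpha,\mathcal H\partial_x^2]$ with the non-integer weight $\alpha$ possibly large (up to $s$), where one must carefully track how many derivatives the weight commutator absorbs versus how many are available from $u\in H^s$; the non-integer and non-local nature forces either a Coifman-Meyer/paraproduct decomposition or an interpolation between integer-weight cases, and this is where I expect the real work to lie, with the extra order-zero kernel $\mathcal T_\delta+\mathcal H$ being a pleasant simplification rather than an obstruction because of its exponential decay.
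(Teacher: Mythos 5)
Your outer skeleton (truncated weights $w_N$, skew-symmetry of the dispersive part reducing everything to a commutator, integration by parts for the nonlinearity, Gronwall, then $N\to\infty$) is the same as the paper's, and the treatment of the nonlinear term, including the $L^3$/Sobolev estimate at $s=1/2$, is fine. The gap is in the central step. Your decomposition $\mathcal L=\mathcal H\partial_x^2+\tfrac1\delta\partial_x+\partial_x^2(\mathcal T_\delta+\mathcal H)$ rests on the claim that $\mathcal T_\delta+\mathcal H$ is convolution with a smooth, integrable, exponentially decaying kernel. That is false: its kernel is $-\tfrac1{2\delta}\coth\big(\tfrac{\pi z}{2\delta}\big)+\tfrac1{\pi z}$, which is smooth at $z=0$ but tends to the constants $\mp\tfrac1{2\delta}$ as $z\to\pm\infty$; equivalently its symbol $i\big(\coth(2\pi\delta\xi)-\sgn\xi\big)$ blows up like $(2\pi\delta\xi)^{-1}$ at $\xi=0$. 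The term $\tfrac1\delta\partial_x$ in \eqref{ILW} is exactly what compensates this low-frequency singularity, and only the full symbol $\Omega_\delta$ in \eqref{1.3b} is smooth at $\xi=0$; once you split off $\mathcal H\partial_x^2$, every piece of your decomposition carries a $\xi|\xi|$-type kink at the origin. This is not a technicality: it is precisely the Benjamin--Ono obstruction, and the paper recalls (Remark \ref{rem-main}(6), \cite{Io}, \cite{FP}) that for BO the analogue of Theorem \ref{decay} fails for large $\alpha$ (sharp at $7/2$). Hence iterating $[\japa^{\beta},\mathcal H]$-type Calder\'on commutator estimates ``for larger $\beta$'', as you propose, cannot give the theorem for arbitrary $\alpha\le s$ (think $\alpha=10$). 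The paper instead treats the ILW dispersion as a single smooth symbol: the Taylor-expansion commutator Lemma \ref{lem3} gives $[\lop\partial_x;\japa^{2\alpha}]\simeq c\,\japa^{2\alpha-1}\tfrac{x}{\japa}\,\Omega'(\partial_x)+\phi_{2\alpha-2}\Omega''(\partial_x)+\dots$, with $\Omega^{(j)}(\partial_x)\in S^0_{1,0}$ for $j\ge2$, and the weighted $L^p$-boundedness of $S^0_{1,0}$ operators (Lemma \ref{pseudo+weights}) handles the tail.

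There is a second, related gap: your asserted final bound $C(\|u\|_{H^s})\big(\|w_Nu\|_2^2+1\big)$ never confronts the leading commutator term, and your argument never uses the hypothesis $\alpha\le s$ --- which cannot be right, since Theorems \ref{main-weights-small}--\ref{main-weights} show that hypothesis is sharp. The leading term is $\int\japa^{2\alpha-1}\tfrac{x}{\japa}\,\Omega'(\partial_x)u\,u\,dx$: a genuinely first-order operator against a weight of order $2\alpha-1$, so it is not controlled by $\|w_Nu\|_2+\|u\|_{H^s}$ alone. The paper handles it by factorizing $\Omega'(\partial_x)=\pop\pop$ with $p(\xi)=\sqrt{\Omega'_1(\xi)}$ smooth and odd of order $1/2$ (smoothness again coming only from the full symbol), symmetrizing, and then estimating the resulting quantity $\|\japa^{\alpha-\frac12}\pop u\|_2\le\|\japa^{\alpha-\frac12}J^{\frac12}u\|_2$ by the decay--regularity interpolation of Lemma \ref{lem2-interpolation},
\begin{equation*}
\|\japa^{\alpha-\frac12}J^{\frac12}u\|_2\le c\,\|\japa^{\alpha}u\|_2^{1-\frac1{2\alpha}}\,\|J^{\alpha}u\|_2^{\frac1{2\alpha}},
\end{equation*}
which is exactly where $\alpha\le s$ enters (one needs $J^{\alpha}u\in L^2$). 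Your sketch has no mechanism for absorbing this half-derivative loss against the weight, so even setting aside the Hilbert-transform decomposition, the estimate you claim ``after all reductions'' is not derived and, for large $\alpha$, not available along your route.
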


Roughly, Theorem \ref{decay} tells us that the $L^2$-decay of the data is preserved if it has at least the same amount of regularity in $L^2$. Also, it shows that the solution flow of the ILW preserves the Schwartz class $\mathcal S(\R)$ as does the solution flow of the KdV equation.

The following theorems show that for large values of the parameter, measuring the $L^2$-decay, the result in Theorem \ref{decay} is optimal :

\begin{theorem}\label{main-weights-small} Let $u\in C([-T, T] : H^{1/6}(\R))$ be a solution to the IVP associated to \eqref{ILW}.
If there exist $t_1, t_2\in [-T,T]$, $t_1<t_2$, such that for $\alpha\in[1/6,1/2]$
\begin{equation}\label{decay-hyp1}
|x|^{\alpha}u(x,t_j)\in L^2(\R), \hskip5pt j=1,2,
\end{equation}
then
\begin{equation}\label{regularity1}
u\in C([-T,T] : H^{\alpha}(\R)).
\end{equation}
\end{theorem}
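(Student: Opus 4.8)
The plan is to run a \emph{decay-implies-regularity} argument of the type developed for the Benjamin--Ono equation, exploiting that at high frequencies the ILW evolution mimics that of Benjamin--Ono. Set $\delta=1$ and write $\Omega$ for the symbol in \eqref{1.3b}; then $\Omega$ is smooth, $O(\xi^3)$ near the origin, and $\partial_\xi\big(\Omega(2\pi\xi)\big)\sim|\xi|$ as $|\xi|\to\infty$ (the $\coth$ singularity at $0$ cancels against the term $-2\pi\xi/\delta$). Let $W(t)$ be the ILW unitary group, $\widehat{W(t)f}(\xi)=e^{it\Omega(2\pi\xi)}\widehat f(\xi)$. I would start from the Duhamel formula based at $t_1$,
\[
u(t)=W(t-t_1)u(t_1)-\int_{t_1}^{t}W(t-t')\partial_x\Big(\tfrac{u^2}{2}\Big)(t')\,dt'=:W(t-t_1)u(t_1)+z(t),
\]
pass to the Fourier variable, and use the standard norm equivalence $\||x|^{\alpha}g\|_{L^2_x}\approx\|D^{\alpha}_\xi\widehat g\|_{L^2_\xi}$ for $0\le\alpha\le1$, where $D^{\alpha}_\xi=(-\partial_\xi^2)^{\alpha/2}$ admits the pointwise representation $D^{\alpha}_\xi h(\xi)=c_\alpha\int\frac{h(\xi)-h(\eta)}{|\xi-\eta|^{1+\alpha}}\,d\eta$.

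For the linear term, taking $h(\xi)=e^{i(t-t_1)\Omega(2\pi\xi)}\widehat{u(t_1)}(\xi)$ and inserting $\pm\,e^{i(t-t_1)\Omega(2\pi\xi)}\widehat{u(t_1)}(\eta)$ under the integral gives
\[
D^{\alpha}_\xi\big(e^{i(t-t_1)\Omega}\widehat{u(t_1)}\big)(\xi)=e^{i(t-t_1)\Omega(2\pi\xi)}D^{\alpha}_\xi\widehat{u(t_1)}(\xi)+c_\alpha\!\int\!\frac{e^{i(t-t_1)\Omega(2\pi\xi)}-e^{i(t-t_1)\Omega(2\pi\eta)}}{|\xi-\eta|^{1+\alpha}}\,\widehat{u(t_1)}(\eta)\,d\eta.
\]
The first term equals, up to a constant, $\widehat{|x|^{\alpha}u(t_1)}$ carried by the group, hence it lies in $L^2$ by \eqref{decay-hyp1}. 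In the second term one uses $|e^{i(t-t_1)\Omega(2\pi\xi)}-e^{i(t-t_1)\Omega(2\pi\eta)}|\le\min\{2,|t-t_1|\,|\Omega(2\pi\xi)-\Omega(2\pi\eta)|\}$ and $|\Omega(2\pi\xi)-\Omega(2\pi\eta)|\lesssim(|\xi|+|\eta|)|\xi-\eta|$ at high frequency; splitting the $\eta$–integral at $|\xi-\eta|\sim(|t-t_1|\,|\xi|)^{-1}$ shows this term is, modulo lower-order pieces, comparable to $|t-t_1|^{\alpha}\langle\xi\rangle^{\alpha}\widehat{u(t_1)}(\xi)$. Consequently, with $\tau:=t_2-t_1\neq0$,
\[
\widehat{|x|^{\alpha}u(t_2)}(\xi)=e^{i\tau\Omega(2\pi\xi)}\widehat{|x|^{\alpha}u(t_1)}(\xi)+c\,|\tau|^{\alpha}\langle\xi\rangle^{\alpha}\widehat{u(t_1)}(\xi)+\big(\text{lower order}\big)+\widehat{|x|^{\alpha}z(t_2)}(\xi),
\]
and, since the left-hand side and the first term on the right are in $L^2$ by \eqref{decay-hyp1}, once the lower-order pieces and $|x|^{\alpha}z(t_2)$ are shown to be in $L^2$ we conclude $\langle\xi\rangle^{\alpha}\widehat{u(t_1)}\in L^2$, i.e. $u(t_1)\in H^{\alpha}$. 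Running the same identity at a general $t$ then also yields $|x|^{\alpha}u(t)\in L^2$ throughout.

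The technical heart, and the step I expect to be the main obstacle, is controlling the nonlinear Duhamel contribution $|x|^{\alpha}z(t_2)$ (and the lower-order remainders) using only $u\in C([-T,T]:H^{1/6})$. Commuting $|x|^{\alpha}$ through $W(t-t')\partial_x$ inside the $t'$–integral costs, by the computation above, a factor $\sim|t-t'|^{\alpha}\langle\partial_x\rangle^{\alpha}$, so one is reduced to estimating $\langle\partial_x\rangle^{\alpha}\partial_x(u^2)$ and $|x|^{\alpha}\partial_x(u^2)$ under the ILW flow; the derivative loss must be absorbed by the Kato-type local smoothing gain of $\tfrac12$ derivative for the linear ILW evolution (which at high frequency coincides with that of Benjamin--Ono), which is precisely why the admissible range stops at $\alpha\le\tfrac12$, while the remaining bilinear estimates are handled by fractional Leibniz/Sobolev inequalities as in the $H^{1/6}$ well-posedness theory, now carried with the weight (for a general $H^{1/6}$ solution these apply after identifying $u$, by uniqueness, with the solution constructed in \cite{IfSa}). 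Since \eqref{decay-hyp1} is only assumed at the endpoints $t_1,t_2$, I would first invoke Theorem \ref{decay} with exponent $1/6$ (legitimate because $\alpha\ge1/6$ forces $|x|^{1/6}u(t_1)\in L^2$ and $1/6\le s=1/6$) to propagate $|x|^{1/6}u\in C([-T,T]:L^2)$, then run the weighted estimate above on a short time interval — possibly iterating, each pass upgrading the regularity of $u(t_1)$ and hence, via Theorem \ref{decay}, the weight that may be propagated — and finally cover $[-T,T]$ by finitely many such intervals.

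Once $u(t_1)\in H^{\alpha}$ is established, the conclusion \eqref{regularity1} follows from the well-posedness theory: by global well-posedness in $H^{\alpha}$ for $\alpha\ge0$ (\cite{IfSa}, \cite{CLOP}) there is a solution in $C([-T,T]:H^{\alpha})$ with data $u(t_1)$, and by uniqueness at the $H^{1/6}$ level it coincides with $u$ on all of $[-T,T]$, whence $u\in C([-T,T]:H^{\alpha})$.
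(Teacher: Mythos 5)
Your plan is a genuinely different route from the paper's (the paper never touches Duhamel or the Fourier-side fractional derivative $D^\alpha_\xi$), but as written it has two gaps that I do not see how to close. First, the pivotal step is treated as if it were an identity: you claim that $D^{\alpha}_\xi\big(e^{i\tau\Omega}\widehat{u(t_1)}\big)$ equals $e^{i\tau\Omega}D^{\alpha}_\xi\widehat{u(t_1)}$ plus a term ``comparable to $|\tau|^{\alpha}\langle\xi\rangle^{\alpha}\widehat{u(t_1)}(\xi)$'' modulo lower order, and then you \emph{solve} for $\langle\xi\rangle^{\alpha}\widehat{u(t_1)}$. The bound $|e^{i\tau\Omega(2\pi\xi)}-e^{i\tau\Omega(2\pi\eta)}|\le\min\{2,|\tau||\Omega(2\pi\xi)-\Omega(2\pi\eta)|\}$ together with the splitting at $|\xi-\eta|\sim(|\tau||\xi|)^{-1}$ only yields an \emph{upper} bound of that size for a genuinely nonlocal operator acting on $\widehat{u(t_1)}$; it does not produce a main term $c|\tau|^{\alpha}\langle\xi\rangle^{\alpha}\widehat{u(t_1)}(\xi)$ with a nondegenerate coefficient plus an error that is in $L^2$ under the mere hypothesis $u\in C([-T,T]:H^{1/6})$. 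Without such a structural decomposition the implication ``left-hand side in $L^2$ $\Rightarrow$ $u(t_1)\in H^{\alpha}$'' does not follow. Second, the nonlinear Duhamel term, which you acknowledge as the main obstacle, cannot be handled the way you suggest: commuting the weight through $W(t-t')\partial_x$ leaves you with roughly $\langle\partial_x\rangle^{\alpha}\partial_x(u^2)$, i.e.\ a loss of more than one derivative, while the solution is only in $H^{1/6}$ and the local smoothing gain is $1/2$ derivative; moreover there is no perturbative (contraction-type) framework for ILW in which an inhomogeneous smoothing estimate would absorb this loss --- this is exactly the obstruction recorded in Remark 1.4(5) of the paper. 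A smaller point: you invoke Theorem 1.1 with $s=1/6$, but that theorem is stated for $s\ge 1/2$, so as written this step is also not licensed.

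For contrast, the paper's proof avoids both difficulties by staying entirely on the energy side: multiplying the equation by $u\,\phi_{2\alpha}$ with $\phi_{2\alpha}(x)=x\langle x\rangle^{2\alpha-1}$, the nonlinear term is integrated by parts into $-\tfrac13\int u^3\phi_{2\alpha}'\,dx$, which for $\alpha\le 1/2$ is bounded by $\|u\|_{H^{1/6}}^3$ (Lemma 2.8), so no derivative ever falls on the nonlinearity; the dispersive term is expanded by the commutator lemma, its leading piece $\phi_{2\alpha}'\,\Omega'(\partial_x)=\phi_{2\alpha}'\,p(\partial_x)^2$ producing, after symmetrization, the \emph{signed} weighted smoothing term $\int_{t_1}^{t_2}\!\!\int|\langle x\rangle^{\alpha-\frac12}p(\partial_x)u|^2\,dx\,dt<\infty$; rerunning the identity with $\langle x\rangle^{2\alpha}$ gives $\langle x\rangle^{\alpha}u(t)\in L^2$, and finally the weighted interpolation Lemma 2.4 (with the observation that $J^{1/2}$ is comparable to $p(\partial_x)$ at high frequency) converts the pair $\langle x\rangle^{\alpha}u,\ \langle x\rangle^{\alpha-\frac12}J^{\frac12}u\in L^2$ into $J^{\alpha}u(t_0)\in L^2$ at some time $t_0$, after which well-posedness in $H^{\alpha}$ concludes. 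If you want to salvage your approach, the honest comparison is that the integration by parts and the sign of the smoothing term are what replace, and cannot obviously be reproduced by, your perturbative Duhamel estimates at $H^{1/6}$ regularity.
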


\begin{theorem}\label{main-weights} Let $s>3/2$. Let $u\in C([-T, T] : H^s(\R))$ be a solution to the IVP associated to \eqref{ILW}. 
If there exist $t_1, t_2\in [-T,T]$, $t_1<t_2$, such that for some $\alpha >s$ 
\begin{equation}\label{decay-hyp2}
|x|^{\alpha}u(x,t_j)\in L^2(\R), \hskip5pt j=1,2,
\end{equation}
then
\begin{equation}\label{regularity2}
u\in C([-T,T] : H^{\alpha}(\R)).
\end{equation}
\end{theorem}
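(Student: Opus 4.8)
The plan is to run a "gain-of-regularity from decay" argument of the type that originates for KdV/BO in the work of Iório, Nahas–Ponce, and Fonseca–Ponce, exploiting the commutator structure of the dispersive part $\partial_x^2\mathcal T_\delta+\delta^{-1}\partial_x$. The key algebraic point is that the symbol $\Omega_\delta(2\pi\xi)=(2\pi\xi)^2\coth(2\pi\delta\xi)-2\pi\xi/\delta$ behaves like $2\pi|\xi|^2$ (up to lower order) for large $\xi$ and like a smooth odd function near $\xi=0$; in particular $\mathcal T_\delta$ is a zero-order multiplier whose symbol $-\coth(2\pi\delta\xi)$ is, away from the origin, a bounded smooth symbol, and $\partial_x\mathcal T_\delta$ has symbol comparable to $|\xi|$. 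I would first record the commutator identity obtained by conjugating the linear ILW group $\{U(t)\}$ with the weight: writing $\Gamma=x+ t\,\Omega_\delta'(D)$ (the analogue of the Galilean/Kato weight, with $\Omega_\delta'(D)$ the Fourier multiplier given by the derivative of the dispersion relation), one has $\Gamma U(t)=U(t)\,x$, so that $|x|^\alpha U(t)u_0 = U(t)(\text{something})$ only after one understands how fractional powers of $\Gamma$ act. Concretely $\Gamma = U(t)\,x\,U(-t)$, and $\Omega_\delta'(D)$ is an operator of order $1$ (it behaves like $c|D|$ for high frequencies because $\Omega_\delta(\xi)\sim \xi^2$); this is the mechanism that converts $\alpha$ powers of decay at two distinct times into $\alpha$ derivatives of the solution.

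The main steps I would carry out are as follows. \emph{Step 1: a priori the solution is smooth.} Since $s>3/2$, the solution is classical, persists in $H^s$, and all the manipulations below (integrations by parts, use of the Duhamel formula, interpolation) are justified; moreover by Theorem \ref{decay} (applied with $\alpha$ replaced by $\min(\alpha,s)=s$) we already know $\langle x\rangle^{s}u\in C([-T,T]:L^2)$, so the difference between the claimed $\alpha$ and the automatic $s$ is what must be upgraded. \emph{Step 2: the two-times weighted bound forces regularity of the data.} Apply the operator $|x|^\alpha$ to the Duhamel formula $u(t_2)=U(t_2-t_1)u(t_1)-\int_{t_1}^{t_2}U(t_2-t')\,(u\partial_x u)(t')\,dt'$. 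Using the conjugation identity, $|x|^\alpha U(t_2-t_1)u(t_1)$ is, modulo lower-order terms controlled by $H^s$ and by the known weight $\langle x\rangle^s u(t_1)$, comparable to $|t_2-t_1|^\alpha\,|D|^{\text{(something)}\,\alpha}u(t_1)$ plus $U(t_2-t_1)(|x|^\alpha u(t_1))$, the latter being in $L^2$ by hypothesis; since also $|x|^\alpha u(t_2)\in L^2$, one deduces $|D|^{c\alpha}u(t_1)\in L^2$ — that is, a gain of roughly $c\alpha$ derivatives over what we started with — provided the Duhamel (nonlinear) term is shown to be no worse. Iterating this bootstrap finitely many times (each pass upgrading the regularity by a fixed fraction of $\alpha$, using the current improved regularity to re-estimate the nonlinear term and re-close the weighted estimate) yields $u(t_1)\in H^\alpha$. \emph{Step 3: propagate.} Once $u(t_1)\in H^\alpha$ with $\alpha>s$, local well-posedness in $H^\alpha$ (valid for all $\alpha\ge 0$ by \cite{IfSa}, and certainly for $\alpha>s>3/2$ by \cite{ABFS}) together with persistence gives $u\in C([-T,T]:H^\alpha)$, which is \eqref{regularity2}. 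The interval $[-T,T]$ is covered because the $H^\alpha$ norm cannot blow up on a time interval where the $H^s$ norm stays finite.

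\emph{Handling the nonlinear term} is where I expect the real work. One must show that $\int_{t_1}^{t_2}|x|^\alpha\,U(t_2-t')\,(u\partial_x u)(t')\,dt'$ lies in $L^2$ (or, at the inductive stage, gains the same fraction of derivatives as the linear term). Moving $|x|^\alpha$ through $U(t_2-t')$ produces, via the conjugation, the weight $|x+(t_2-t')\,\Omega_\delta'(D)|^\alpha$ acting on $u\partial_x u$; the dangerous piece is the factor $(t_2-t')\,\Omega_\delta'(D)$ applied to $u\partial_x u$, which costs roughly $\alpha$ more derivatives on a quadratic term — this is exactly the familiar derivative loss that the Kato smoothing effect of the ILW group is designed to absorb. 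So the technical heart is a commutator estimate of Calderón–type, bounding $\| \,|x|^\alpha\,\partial_x(u^2)\,\|$ by $\| \langle x\rangle^\alpha u\|_{L^2}\|u\|_{H^{\beta}}+\|u\|_{L^\infty}\|\langle x\rangle^\alpha u\|$-type quantities, combined with the local smoothing inequality $\|D_x^{1/2}U(t)\phi\|_{L^\infty_xL^2_T}\lesssim\|\phi\|_{L^2}$ for the ILW group (which follows from \eqref{1.3b} since $\Omega_\delta(\xi)$ is uniformly elliptic of order $2$ away from the origin, just as for KdV). A subtlety special to ILW, absent for KdV and needing a line of care, is the behavior of $\Omega_\delta$ and of $\Omega_\delta'$ near $\xi=0$: $\Omega_\delta$ is not homogeneous, but $\Omega_\delta(\xi)/\xi$ and $\coth(2\pi\delta\xi)$ extend to smooth bounded functions on any neighborhood of the origin minus $0$, and the low-frequency part of $U(t)$ is essentially a smooth multiplier times a finite-speed translation, so the low-frequency contributions are harmless. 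I would isolate the low frequencies with a Littlewood–Paley cutoff, treat them by crude $L^2$ bounds, and run the commutator/smoothing machinery only on the high-frequency part, where ILW and KdV are qualitatively identical.
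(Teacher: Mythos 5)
Your proposal captures the right heuristic (the vector field $\Gamma=x-t\,\Omega_\delta'(\partial_x)$ with $\Omega_\delta'$ of order one explains why $\alpha$ powers of decay at two times should produce $\alpha$ derivatives, cf.\ Remark \ref{rem-main}(2)), and Steps 1 and 3 (a priori $H^s$ control for $s>3/2$, and propagation by well-posedness once $u(\hat t)\in H^\alpha$ at one time, cf.\ Remark \ref{rem-main}(7)) are fine. But Step 2, which is the whole content of the theorem, has a genuine gap on two counts. First, the decomposition ``$|x|^\alpha U(t)u_0\approx U(t)(|x|^\alpha u_0)+|t|^\alpha|D|^{c\alpha}u_0$ modulo lower order'' is asserted, not proved: for non-integer $\alpha$ one cannot binomially expand the fractional power of the non-commuting sum $x+t\,\Omega_\delta'(D)$, and making such a splitting rigorous is precisely the difficulty the known KdV/BO arguments avoid. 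Second, and more seriously, your plan to absorb the loss in the Duhamel term by the linear Kato smoothing $\|D^{1/2}U(t)\phi\|_{L^\infty_xL^2_T}\lesssim\|\phi\|_{L^2}$ cannot close: the weight conjugation applied to $u\partial_xu$ costs of order $\alpha+1$ derivatives on a quadratic expression, while the ILW group only gains $1/2$ derivative locally, and no contraction/Duhamel-type scheme is available for ILW precisely because of this mismatch (this is the content of Remark \ref{rem-main}(5); it is the same obstruction as for BO). Your claim that ``each bootstrap pass upgrades the regularity by a fixed fraction of $\alpha$'' names the desired conclusion of an iteration without supplying the mechanism that produces the gain.

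The paper's proof supplies that mechanism differently: it works on the differential equation itself with truncated weights $\phi_\beta(x)=x\japa^{\beta-1}$, uses the commutator expansion of Lemma \ref{lem3} for $[\lop\partial_x;\phi_{2\alpha}]$ so that the leading term $\phi_{2\alpha}'\,\Omega'(\partial_x)$ produces, after symmetrization, the sign-definite space-time quantity $\int\!\!\int|\japa^{\alpha-\frac12}\pop u|^2\,dx\,dt<\infty$ (a weighted smoothing effect derived from the energy identity, not from the free group), and then iterates: the same weighted identity is applied successively to $\pop u$, $\partial_xu$, $J^{3/2}u$, each step trading $1/2$ power of the weight for $1/2$ derivative, with the nonlinear terms handled by integration by parts and the a priori bound on $\sup_t\|\partial_xu(t)\|_\infty$ (this is exactly where $s>3/2$ enters). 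Finally, the mixed information $\japa^{\alpha-2}J^2u(t^*)\in L^2$ together with $\japa^{\alpha}u(t^*)\in L^2$ is converted into $J^{\alpha}u(t^*)\in L^2$ by the weighted interpolation Lemma \ref{lem2b-interpolation}, an ingredient absent from your sketch. If you want to salvage your route, you would have to replace the Duhamel/group-smoothing treatment of the nonlinearity by an energy-type argument of this kind; as written, the proposal does not constitute a proof.
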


\begin{remark}\label{rem-main}\hskip2cm

\begin{enumerate}

\item In the above theorems, the time interval $[-T,T]$ can be taken to be arbitrarily large, see \cite{IfSa}.

\item In  these theorems the relationship between the smoothness, $s$, and the decay, $\alpha$, is dictated by the commutative relation among the operators $\Gamma=x-t\Omega'(\partial_x)$ and the linear part of the ILW equation 
$\partial_t-i\Omega_{\delta}(\partial_x)$, see \eqref{1.3b}.

\item The gap between the result in Theorem \ref{main-weights-small} and those in Theorem \ref{main-weights}, i.e. $s\in(1/2,3/2]$, is due to the arguments used in their proofs. These are mainly based on weighted energy estimates. Thus, when $\alpha\leq 1/2$ the structure of the non-linear term is preserved with the weight, so it can handle it by integration by parts. For $\alpha>1/2$ this is not the case.  One needs to rely on an iterative argument which at each step increases in  $1/2$  the desired regularity. This is coming from the form of the local smoothing effect. This provides a gain of a $1/2$-derivative (described by a nonlocal operator). To establish this smoothing effect,  one has to estimate terms involving  fractional derivatives and fractional weights. In our arguments, it is crucial to have the quantity $\underset{t}{\sup} \| \partial_xu(t)\|_{\infty}$ controlled  \it a priori \rm. This explains the hypothesis $s>3/2$ in Theorem \ref{main-weights}.

\item  For the $k$-generalized KdV
\begin{equation*}
\partial_t u + \partial_x^3  u  + u^k\partial_x u = 0,\;\;\;\;\;\;k=1,2,.....
\end{equation*}
similar results to those in Theorem \ref{decay} for $\alpha\in \mathbb N$ were proven in \cite{Ka}, and for $\alpha\in \mathbb R$ in \cite{NP2} and \cite{FLP15}. Theorems \ref{main-weights-small}-\ref{main-weights} were deduced in \cite{FLP15} and \cite{NP2}. In this case, the local smoothing effect gives a gain of one classical derivative. Moreover, the local well-posedness is based in the contraction principle which can be extended to obtain the above theorems for this model.

\item As in the case for the BO equation, see \cite{MoSaTz},  one should not expect a well-posedness result based on  solely the contraction principle.

\item For the BO equation, similar results to those in Theorems \ref{decay}-\ref{main-weights} do not hold, see \cite{Io}, \cite{FP}, and \cite{FLP13}. For example, Theorem \ref{decay} only holds for $\alpha\leq 7/2$, and this value is sharp, see \cite{FP}. This is mainly due to the non-smoothness of the regularity of the operator modeling the dispersive effect.

\item From the  well-posedness result obtained in \cite{IfSa}  to obtain Theorem \ref{main-weights-small} and Theorem \ref{main-weights} it suffices to show that there exists 
$\hat{t}\in (-T,T)$ such that
$u(\cdot,\hat{t})\in H^{\alpha}(\R)$.
%In this case, from Theorem \label{decay} one already has  that 
%\begin{equation*} |x|^s\, u(\cdot,t)\in L^{\infty}([-T,T] : L^2(\R)).
%\end{equation*}
\end{enumerate}
\end{remark}

\medskip

Next, we shall study the long time behavior of the global solutions of the ILW equation \eqref{ILW}. This study was initiated in \cite{MPS}, motivated by previous works \cite{MuPo1} and \cite{MuPo2} for the KdV and the BO  equation respectively. Also, here we shall follow some  of the ideas introduced in 
\cite{MMPP} which improve some of the results in  \cite{MuPo1} and \cite{MuPo2}, see also \cite{FLMP}.

\medskip

\begin{theorem}\label{L2ILW}
Let $u_0 \in L^2(\mathbb{R}) $ and  $u = u(x,t)$ be the global in time solution of the IVP associated to the \eqref{ILW}  such that 
\begin{equation*}
u \in C(\mathbb{R}:L^2(\mathbb{R}))\cap L^{\infty}(\mathbb{R}: L^2(\mathbb{R})).
\end{equation*}
Then 
  \begin{equation}\label{EQ1}
   \liminf_{t \to \infty }\int_{B_{t^b}(0)}u^2(x,t)\, dx = 0,
  \end{equation}
 where $B_{t^b}(0)$ denotes the ball centered in the origin with radius $t^b$,
 \begin{equation}
  B_{t^b} (0):= \{x \in \mathbb{R} : |x| < t^b\} \quad \mbox{with}\quad 0 < b < \frac{2}{3}.
 \end{equation} 
 
Moreover, there exist a constant $C > 0$ and an increasing sequence of times $t_n \to \infty$, such that 
    \begin{equation}\label{EQ2}
    \int_{B_{t_n^b}(0)}u^2(x,t_n)\, dx \leq \frac{C}{\log^{\frac{(1-b)}{b}}(t_n)}.
    \end{equation}

\end{theorem}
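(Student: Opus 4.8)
The plan is to run a virial/Lyapunov-functional argument in the spirit of \cite{MuPo1}, \cite{MuPo2}, \cite{MMPP}, \cite{MPS}, using a smooth, bounded, increasing weight that depends on the self-similar variable $x/t^{b}$. Concretely, fix a smooth nondecreasing function $\phi:\R\to\R$ with $\phi'\ge 0$, $\phi'$ compactly supported (or rapidly decaying) and, say, $\phi(x)\to 0$ as $x\to-\infty$, $\phi(x)\to 1$ as $x\to+\infty$; a convenient choice is $\phi=\operatorname{sech}$-type so that $|\phi^{(k)}|\lesssim \phi'$. For a scaling exponent to be chosen in $(b,2/3)$, set $\psi(x,t):=\phi\!\big(x/t^{a}\big)$ and consider the localized mass
\[
\mathcal{I}(t):=\int_{\R} u^2(x,t)\,\psi(x,t)\,dx .
\]
Differentiating in time and using the equation \eqref{ILW} (with $\delta=1$), one gets, after integrating by parts, a main negative dispersive term coming from $\mathcal{T}\partial_x^2$, a transport term $-\tfrac{a x}{t}\,t^{-a}\phi'$ from $\partial_t\psi$, the term $\partial_x u$, and the cubic nonlinearity $u^2\partial_x u$. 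The key structural point, exactly as in Remark~\ref{rem-main}(2), is that the quadratic form associated with $\mathcal{T}\partial_x^2$ against a slowly varying weight is coercive: one should obtain
\[
\frac{d}{dt}\,\mathcal{I}(t)\ \le\ -\,\frac{c}{t^{a}}\int (\text{local energy density near }|x|\sim t^{a})\ +\ (\text{error terms}),
\]
with the errors controlled by $\|u_0\|_{L^2}^2$ times integrable powers of $t$. The nonlocal operator $\mathcal{T}$ forces one extra commutator estimate: $[\mathcal{T},\psi]$ and $[\partial_x\mathcal{T},\psi]$ must be shown to be bounded on $L^2$ with operator norm $O(t^{-a})$ (or better), which follows since $\partial_x\mathcal{T}$ has the bounded, smooth symbol $-2\pi\xi\coth(2\pi\xi)$ away from $\xi=0$ and behaves like $-1/\pi$ near $\xi=0$; a Coifman--Meyer / pseudodifferential commutator bound, or a direct kernel estimate using $\coth$, suffices.

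The second ingredient is a second virial functional that converts the local-energy decay into decay of the local $L^2$-mass itself. Following \cite{MMPP}, introduce a companion weighted quantity (morally $\int u^2 \chi$ with a steeper cutoff, or a ``dual'' weight), and combine its monotonicity with the already-established integrated bound $\int_1^\infty t^{-a}\,(\text{local energy})\,dt<\infty$. From $\int_1^\infty \big(\text{local energy at scale }t^{a}\big)\,\tfrac{dt}{t^{a}}\lesssim \|u_0\|_{L^2}^2$ and the fact that the integrand is, up to controllable errors, comparable to $\tfrac{d}{dt}$ of a bounded quantity, one extracts a sequence $t_n\to\infty$ along which the local energy on $B_{t_n^{a}}$ is $o(1)$; then a Gagliardo--Nirenberg / interpolation step on the annular region, together with $\|u(t)\|_{L^2}\le \|u_0\|_{L^2}$, upgrades this to $\int_{B_{t_n^{b}}(0)}u^2(x,t_n)\,dx\to 0$ for any $b<a$, giving \eqref{EQ1}. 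The quantitative rate \eqref{EQ2} comes from being careful in this extraction: the divergence of $\int_1^T \tfrac{dt}{t}$ is logarithmic, so optimizing the choice of $t_n$ inside a dyadic block $[T,2T]$ and tracking the exponents $a$ versus $b$ yields the loss $\log^{-(1-b)/b}(t_n)$; this is precisely the place where the constraint $0<b<2/3$ enters, since $2/3$ is the exponent at which the transport error $\tfrac{a}{t}\cdot x\,\psi'$ ceases to be dominated by the dispersive gain (the dispersive term scales like $t^{-a}$ against a weight varying on scale $t^{a}$, i.e. effectively like the KdV/BO bound $t^{-2a}\cdot t^{a}= t^{-a}$, versus the nonlinear term which must be absorbed using $\|u\|_{L^2}$ small-in-the-region — balancing gives the threshold $2/3$).

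The main obstacle I expect is the simultaneous control of (i) the nonlocal commutator terms generated by $\mathcal{T}$ and (ii) the cubic nonlinearity in the region $|x|\sim t^{a}$, using \emph{only} the $L^2$ bound on $u$. Unlike KdV, where one has the Kato smoothing of a full derivative to absorb $u^2 u_x$, here the smoothing from $\mathcal{T}\partial_x^2$ is only of order $1/2$ (cf. Remark~\ref{rem-main}(3)), so the nonlinear term $\int u^2 u_x \psi = -\tfrac13\int u^3 \psi_x$ must be handled by putting the derivative entirely on the slowly varying weight and then estimating $\int |u|^3 |\psi_x|$ by interpolation between the local $L^2$-mass and the $1/2$-smoothing norm localized to the support of $\psi_x$ — this is delicate and is the technical heart of the argument, and it is exactly why the theorem is stated at the $L^2$ level with the explicit exponent restriction rather than for all $b<1$.
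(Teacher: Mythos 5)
Your plan is built around the quadratic localized mass $\int u^2\,\psi(x,t)\,dx$, and this is where it breaks at the $L^2$ level. Two problems arise. First, when you differentiate $\int u^2\psi\,dx$ and use the virial identity (cf.\ Lemma \ref{virial-right}), the coercive term produced by the dispersion is $-c\int \partial_x\psi\,|q(\partial_x)u|^2dx$, i.e.\ a \emph{local energy} (roughly a local $\dot H^{1/2}$ density), not the local mass $\int_{B_{t^b}}u^2$; the local mass only enters through $\int u^2\partial_t\psi\,dx$, which with your one-sided increasing weight does not give decay on a ball centered at the origin. Second, and more seriously, the cubic term $\tfrac13\int u^3\partial_x\psi\,dx$ cannot be closed with only $u\in L^\infty_t L^2_x$: your proposed fix, interpolating the local $L^3$ norm against a ``localized $1/2$-smoothing norm,'' is circular, since that smoothing is not an a priori bound but is exactly the good term of the same identity, and absorbing $\int|u|^3|\partial_x\psi|$ into it requires $\|u(t)\|_{H^{1/2}}$ (or $L^\infty$) control. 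Indeed, this quadratic-functional scheme is precisely what the paper runs for Theorem \ref{energy-ILW}, where the cubic term is handled via the Gagliardo--Nirenberg bound of Lemma \ref{GNSinequality} using the $H^{1/2}$ hypothesis; that is exactly why that theorem is stated in the energy space and cannot be the route to Theorem \ref{L2ILW}.

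The paper's actual mechanism for the $L^2$ theorem is different and sidesteps both issues: following \cite{MMPP}, one takes a functional \emph{linear} in $u$,
\begin{equation*}
\mathcal{I}(t)=\frac{1}{\mu(t)}\int u(x,t)\,\psi_{\sigma}\Big(\frac{x}{\mu_1(t)}\Big)\,\phi_{\lambda}\Big(\frac{x}{\mu_1^{q}(t)}\Big)\,dx,
\qquad \mu_1(t)=\frac{t^{b}}{\log t},\quad \mu(t)=t^{1-b}\log^2 t,
\end{equation*}
with $\psi_\sigma$ a bounded antiderivative of a positive bump and $\phi_\lambda$ an exponentially decaying cutoff at the larger scale $\mu_1^{q}$, $q>1$. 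Differentiating, the quadratic nonlinearity $\partial_x(u^2/2)$ integrates by parts to produce, with a favorable sign, exactly the local-mass density $\frac{1}{\mu\mu_1}\int u^2\,\psi_{\sigma}'\phi_{\lambda}\,dx$ (Lemma \ref{BoundL1}); all dispersive contributions are mere error terms, estimated by Cauchy--Schwarz and mass conservation after writing $\lop\partial_x=\partial_x^2\Psi(\partial_x)$ with $\Psi$ of order zero and dumping both derivatives on the slowly varying weights, which yields integrable powers of $t$ under the constraints $b<2/3$, $b\le 2/(2+q)$ (this, not a dispersive-versus-transport balance, is where $2/3$ enters). Since $\mu\mu_1=t\log t$ and $1/(t\log t)\notin L^1$ near infinity, the resulting bound $\int_{\{t\gg1\}}\frac{1}{t\log t}\int_{B_{t^{b}}}u^2\,dx\,dt<\infty$ (Lemma \ref{intl2bounded}) forces \eqref{EQ1}, and the choice $\mu_1=t^{b}/\log t$ is what encodes the quantitative rate in \eqref{EQ2}. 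So the missing idea in your proposal is this linear-in-$u$ functional, which turns the nonlinearity from the obstruction into the source of coercivity and removes any need for smoothing or $H^{1/2}$ control.
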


As a consequence of Theorem \ref{L2ILW} we obtain:
\medskip

\begin{corollary}\label{corL2ILW} Let $u_0 \in L^2(\mathbb{R}) $ and  $u = u(x,t)$ be the global in time solution to the
IVP associated to the \eqref{ILW}   such that 
$$u \in C(\mathbb{R}:L^2(\mathbb{R}))\cap L^{\infty}(\mathbb{R}: L^2(\mathbb{R})).$$
Then 
 \begin{equation}\label{notcentered}
   \liminf_{t \to \infty }\int_{B_{t^b}(t^m)}u^2(x,t)\, dx = 0,
  \end{equation}
  where
  \begin{equation}
 B_{t^b}(t^m) := \{ x \in \mathbb{R} :|x - t^m|< t^b\},
\end{equation}
with
\begin{equation}\label{conditions-on-m}
0 < b < \frac{2}{3} {\hskip15pt\text{and} \hskip15pt}    0  \leq m < 1 - \frac{3}{2}b.
\end{equation}
\end{corollary}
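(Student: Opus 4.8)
The plan is to invoke Theorem~\ref{L2ILW} directly when the center $t^{m}$ moves slowly, and otherwise to re-run the virial argument underlying Theorem~\ref{L2ILW} with the spatial cut-off re-centered at the moving point $x=t^{m}$.

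\emph{Slow range.} If $0\le m<2/3$, then, since also $0<b<2/3$, fix $\beta$ with $\max\{b,m\}<\beta<2/3$. For $t$ large,
\[
B_{t^{b}}(t^{m})\subseteq\{x\in\R:|x|<t^{m}+t^{b}\}\subseteq\{x\in\R:|x|<2t^{\max\{b,m\}}\}\subseteq B_{t^{\beta}}(0),
\]
so $\int_{B_{t^{b}}(t^{m})}u^{2}(x,t)\,dx\le\int_{B_{t^{\beta}}(0)}u^{2}(x,t)\,dx$, and the right-hand side has vanishing $\liminf$ as $t\to\infty$ by Theorem~\ref{L2ILW} applied with exponent $\beta$. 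This already yields \eqref{notcentered} whenever $2/9\le b<2/3$, because then $1-\tfrac32 b\le 2/3$, so \eqref{conditions-on-m} forces $m<2/3$.

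\emph{Fast range.} It remains to treat $b<2/9$ and $2/3\le m<1-\tfrac32 b$, where I would imitate the proof of Theorem~\ref{L2ILW} after replacing $x$ by $x-t^{m}$. Letting $\phi$ be the cut-off used there, set $\mathcal I(t):=\int\phi\!\big(\tfrac{x-t^{m}}{t^{b}}\big)u^{2}(x,t)\,dx$, so that $\mathcal I(t)\ge\int_{B_{t^{b}}(t^{m})}u^{2}(x,t)\,dx$. Differentiating in $t$ and using \eqref{ILW}, each term occurring in the proof of Theorem~\ref{L2ILW} reappears with $x$ replaced by $x-t^{m}$ (the dispersive gain, the dilation term $-\tfrac bt\int\tfrac{x-t^{m}}{t^{b}}\,\phi'(\cdot)\,u^{2}$, the $\phi'''$-commutator of size $O(t^{-3b})$, and the nonlinear flux $t^{-b}\int\phi'(\cdot)\,u^{3}$), while there is exactly one new contribution, the drift term
\[
\mathcal E(t):=-\,m\,t^{\,m-1-b}\int\phi'\!\Big(\tfrac{x-t^{m}}{t^{b}}\Big)u^{2}(x,t)\,dx,\qquad
|\mathcal E(t)|\lesssim m\,t^{\,m-1-b}\,\sup_{\tau}\|u(\tau)\|_{L^{2}}^{2}.
\]
Integrating on dyadic blocks $[T,2T]$ and arguing by contradiction — supposing $\int_{B_{t^{b}}(t^{m})}u^{2}\ge\varepsilon$ for all large $t$ — the reductions in the proof of Theorem~\ref{L2ILW} make the accumulated dispersive gain diverge, of order $\gtrsim\varepsilon\,T^{1-b}$, so it suffices to check that every error accumulates as $o(T^{1-b})$: for $\mathcal E$ this is $\int_{T}^{2T}|\mathcal E(t)|\,dt\lesssim T^{m-b}=o(T^{1-b})$ (where $m<1$ is used), for the $\phi'''$-commutator it is $O(T^{1-3b})=o(T^{1-b})$, and the dilation term is $O(1)$.

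\emph{Main obstacle.} The delicate point — the one that pins down the exponent $1-\tfrac32 b$ — is the nonlinear flux $t^{-b}\int\phi'(\cdot)\,u^{3}\,dx$ over the \emph{moving} annulus $t^{m}-2t^{b}<x<t^{m}+2t^{b}$. As in Theorem~\ref{L2ILW}, it has to be absorbed by means of the local smoothing of the ILW flow — governed at low frequencies by the cubic, KdV-type part of $\Omega_{\delta}$ in \eqref{1.3b} — and it is precisely the balance between that dispersive-spreading budget and the drift speed $\partial_t(t^{m})$ of the window (which, heuristically, must stay behind the soliton zone $\{x\sim ct,\ c>1/\delta\}$, whence $m<1$) that produces the condition $\tfrac32 b+m<1$, that is \eqref{conditions-on-m}; for $m=0$ it is exactly $0<b<2/3$. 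Since the error terms are tracked only up to this balance, no quantitative rate analogous to \eqref{EQ2} is to be expected in the moving frame, in agreement with the statement of the corollary.
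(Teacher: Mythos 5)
Your slow-range reduction is correct: for $m<2/3$ the moving ball is contained in a centered ball $B_{t^{\beta}}(0)$ with $\beta<2/3$, so Theorem~\ref{L2ILW} applies, and this covers all of \eqref{conditions-on-m} once $b\ge 2/9$. The gap is in the fast range. There you switch to the quadratic virial $\int \phi\big((x-t^{m})/t^{b}\big)u^{2}\,dx$, whose time derivative produces the cubic flux $t^{-b}\int \phi'\,u^{3}\,dx$; at the $L^{2}$ level this term cannot be bounded by mass conservation, and your appeal to ``local smoothing of the ILW flow'' and a ``dispersive-spreading budget'' is not backed by any estimate. In fact, the quadratic virial identity for \eqref{ILW} (Lemma~\ref{virial-right}) yields sign-definite control only of $\int \partial_x\varphi\,|q(\partial_x)u|^{2}$, not of $\int \partial_x\varphi\,u^{2}$, and in this paper the cubic term is handled only under $H^{1/2}$ regularity, using Lemma~\ref{GNSinequality}, the $\zeta_n$ partition, and the already-proven $L^{2}$ bound (this is exactly the proof of Theorem~\ref{energy-ILW}). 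Hence the contradiction step ``accumulated dispersive gain $\gtrsim \varepsilon T^{1-b}$'' has no basis for $u_0\in L^{2}$ only, and the assertion that a drift/dispersion balance ``produces'' the exponent $1-\tfrac32 b$ is a heuristic, not a derivation.

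The paper's proof avoids this difficulty entirely by re-centering the functional of Theorem~\ref{L2ILW}, which is \emph{linear} in $u$: $\mathcal I_{\rho}(t)=\mu(t)^{-1}\int u\,\psi_{\sigma}\big((x-\rho(t))/\mu_1(t)\big)\phi_{\lambda}\big((x-\rho(t))/\mu_1^{q}(t)\big)\,dx$ with $\rho(t)=\pm t^{m}$. With a linear functional the Burgers nonlinearity contributes the sign-definite quadratic quantity $\tfrac{1}{2\mu\mu_1}\int u^{2}\psi'_{\sigma}\phi_{\lambda}\,dx$ (the very object to be controlled), no cubic term ever appears, and the only new contributions are the two drift terms with $\rho'(t)\sim t^{m-1}$, which are estimated by Cauchy--Schwarz and mass conservation; their integrability requires only $m\le 1-\tfrac b2$, which is what \eqref{conditions-on-m} guarantees (with room to spare). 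So the threshold $1-\tfrac32 b$ is not the outcome of any balance with a smoothing effect; it is simply a sufficient condition under which the re-centered linear virial closes. To repair your fast-range argument you would need either to adopt this linear-in-$u$ functional or to supply the missing estimate for the cubic term, and the latter is not available for data that are merely in $L^{2}$.
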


\medskip

Next, we present a result concerning the decay of solutions in the energy space :

\begin{theorem}\label{energy-ILW}
Let $u_0 \in H^{1/2}(\mathbb{R}) $ and $u = u(x,t)$ be the global in time solution of the  IVP associated to \eqref{ILW} such that 
\begin{equation*}
 u \in C(\mathbb{R}:H^{1/2}(\mathbb{R}))\cap L^{\infty}(\mathbb{R}: H^{1/2}(\mathbb{R})).
\end{equation*}
   Then 
  \begin{equation}\label{EQ3}
   \liminf_{t \to \infty }\int_{B_{t^b}(0)}\left(u^2(x,t)+|q(\partial_x)u(x,t)|^2\right)\, dx = 0,\;\;\;\;0<b<\frac{2}{3}.
  \end{equation}
  where the operator $q(\partial_x)$ is defined below (see \eqref{a7}).
\end{theorem}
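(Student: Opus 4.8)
The plan is to adapt the virial/monotonicity scheme already used for Theorem \ref{L2ILW}, but now applied to the $H^{1/2}$-level functional. The starting point is to rewrite the conserved $H^{1/2}$-type energy in a form that isolates a nonnegative quadratic part controlling $u^2 + |q(\partial_x)u|^2$ plus lower-order terms. Concretely, one takes a smooth, bounded, increasing weight $\phi$ with $\phi'\ge 0$ compactly supported near the region of interest (the standard choice $\phi(x/t^b)$, with $\phi$ transitioning from $0$ to $1$), and considers the localized virial quantity
\begin{equation*}
\mathcal{I}(t) = \int \phi\!\left(\frac{x}{\lambda(t)}\right)\Big(u^2 + |q(\partial_x)u|^2\Big)(x,t)\,dx,
\end{equation*}
with $\lambda(t)=t^b$. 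The goal is to show $\int_1^\infty \frac{d}{dt}\mathcal{J}(t)\,dt < \infty$ for a suitable companion quantity $\mathcal{J}$ whose time derivative dominates the local energy density up to an integrable-in-time error, which forces the $\liminf$ to vanish.

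The key steps, in order: (i) Compute $\frac{d}{dt}$ of the localized functional using the equation \eqref{ILW}; the linear dispersive part $\mathcal{T}_\delta\partial_x^2 + \frac1\delta\partial_x$ produces, after commuting with the cutoff, a leading term of the form $\int \phi'(x/\lambda)(\dots)$ which, by the symbol calculus for $\Omega_\delta$ in \eqref{1.3b}, controls $\int \phi'(x/\lambda)(u^2+|q(\partial_x)u|^2)$ — this is exactly where the operator $q(\partial_x)$ of \eqref{a7} is defined so as to make this Kato-smoothing/coercivity identity clean (as in \cite{MMPP}, \cite{MPS}). (ii) Control the commutator errors and the nonlinear contribution $u\partial_x u$: after localization these give terms with an extra factor $\lambda'/\lambda = b/t$ or involve $\|\partial_x u\|$-type quantities at the $H^{1/2}$ level, and are estimated using $u\in L^\infty_t H^{1/2}$ together with the restriction $b<2/3$, which guarantees the dispersive gain beats the spreading of the support. (iii) Integrate in time: the main term is nonnegative, the error terms are integrable over $[1,\infty)$ (this is where $b<2/3$ is used quantitatively), so $\liminf_{t\to\infty}\int\phi'(x/\lambda)(u^2+|q(\partial_x)u|^2)\,dx=0$ along a sequence; (iv) a standard covering/translation argument upgrades the weighted $\phi'$ statement to the ball $B_{t^b}(0)$, yielding \eqref{EQ3}.

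The main obstacle I expect is step (ii): handling the nonlinear term and the commutators at the $H^{1/2}$ regularity level. Unlike the pure $L^2$ virial of Theorem \ref{L2ILW}, here one differentiates a functional that already contains a half-derivative, so the nonlinearity $u\partial_x u$ interacts with $|q(\partial_x)u|^2$ to produce terms like $\int \phi' \, q(\partial_x)u \cdot q(\partial_x)(u\partial_x u)$, which cannot be closed by naive integration by parts; one needs fractional Leibniz/commutator estimates (Kato--Ponce type) and the Kato smoothing gain of $1/2$ derivative coming from the dispersive symbol to absorb the loss. A secondary technical point is that $\mathcal{T}_\delta$ is nonlocal, so commuting it with the spatial cutoff $\phi(x/\lambda)$ is not exact; one must show the commutator $[\mathcal{T}_\delta,\phi(\cdot/\lambda)]$ is suitably small in operator norm on the relevant spaces, with decay in $\lambda$, exploiting the rapid (exponential) off-diagonal decay of the kernel $\coth(\pi(x-y)/2\delta)-\mathrm{sgn}(x-y)$ of $\mathcal T_\delta$. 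Once these estimates are in place, the time integration and covering arguments are routine given the framework of \cite{MPS} and \cite{MMPP}.
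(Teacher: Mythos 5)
There is a genuine gap, and it is exactly at the point you flag as ``the main obstacle'' without resolving it. You propose to differentiate the localized $H^{1/2}$-level quantity $\int \phi(x/\lambda(t))\,(u^2+|q(\partial_x)u|^2)\,dx$. Then the nonlinearity produces terms of the schematic form $\int \phi\, q(\partial_x)u\; q(\partial_x)(u\partial_x u)\,dx$, i.e.\ roughly $3/2$ derivatives distributed on a quadratic expression, while the only a priori information is $u\in L^\infty_t H^{1/2}$; the Kato-type smoothing gain produced by the good term $-\int \phi'\,|q(\partial_x)q(\partial_x)u|^2$ is only half a derivative \emph{localized and integrated in time}, and it is not explained how it absorbs a commutator such as $[q(\partial_x),u]\partial_x u$, which at this regularity cannot be estimated by Kato--Ponce without pointwise or $L^\infty_t$ control of $\partial_x u$ (hypotheses of the type $s>3/2$, as in Theorem \ref{main-weights}, not available here). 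Invoking ``fractional Leibniz plus the smoothing gain'' is a plan, not an argument, and at $H^{1/2}$ I do not see how it closes.

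The paper sidesteps this difficulty entirely by never putting $|q(\partial_x)u|^2$ inside the functional. One differentiates the \emph{$L^2$-level} weighted quantity $\mathcal{J}(t)=\frac{1}{\mu(t)}\int u^2\,\psi_{\sigma}(x/\mu_1(t))\,dx$, with $\psi_\sigma$ the bounded antiderivative of the exponential weight and $\mu,\mu_1$ as in \eqref{parameters}. The localized term $\frac{1}{\mu\mu_1}\int \phi_\sigma(x/\mu_1)\,|q(\partial_x)u|^2\,dx$ then appears as the \emph{output} of the commutator of the dispersive part with the weight (Lemma \ref{est-1}, since $[\lop\partial_x;\psi]\approx \psi'\,\Omega'(\partial_x)=\psi'\,q(\partial_x)q(\partial_x)$), with errors controlled by Lemmas \ref{est-1} and \ref{est-3}. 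Consequently the nonlinearity contributes only the cubic, derivative-free term $\frac{c}{\mu\mu_1}\int u^3\phi_\sigma(x/\mu_1)\,dx$, which is reduced, via the Gagliardo--Nirenberg estimate of Lemma \ref{GNSinequality} applied on unit intervals with the cutoffs $\zeta_n$ (this is the only place the $H^{1/2}$ hypothesis enters), to $\frac{c}{\mu\mu_1}\int u^2\phi_\sigma(x/\mu_1)\,dx$, which is integrable in $\{t\gg1\}$ by the already established $L^2$ local-decay bound of Lemma \ref{intl2bounded}. Integrating in time and using that $1/(t\log t)\notin L^1$ near infinity gives \eqref{EQ3}. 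If you want to salvage your scheme, you would have to either raise the regularity hypothesis or restructure it along these lines, letting the half-derivative quantity come out of the linear commutator rather than be differentiated directly.
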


\medskip

Now, we will consider the asymptotic decay of the solution in a domain moving in time in the right direction: 

\begin{theorem}
\label{to-the-right}
There exists a constant $C_0>0$ depending only on $\|u_0\|_{H^1}$ such that the global  solution 
\[
u\in C(\mathbb R:H^1(\mathbb R))\cap L^{\infty}(\mathbb R:H^1(\mathbb R))
\]
of IVP associated to \eqref{ILW} satisfies
\begin{equation}
\label{main}
\lim_{t\to \infty} \| u(t)\|_{L^2(x\geq C_0t)}=0.
\end{equation}
\end{theorem}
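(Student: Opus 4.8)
The plan is to run a virial/monotonicity argument with a weight that is transported to the right at a fixed speed $C_0$, exploiting that for the ILW dispersion the group velocity $\Omega_\delta'(2\pi\xi) = (2\pi\xi)^2[\coth' \cdot 2\pi\delta\xi] + \dots$ is bounded below (away from $+\infty$ only on bounded frequency sets, but crucially the dispersive relation $-i\Omega_\delta$ pushes energy to the \emph{left} relative to the characteristic speed $1/\delta$ of the lower-order transport term, so a right-moving window of speed larger than the relevant velocities loses mass). Concretely, fix a smooth nondecreasing cutoff $\psi$ with $\psi\equiv 0$ on $(-\infty,0]$, $\psi\equiv 1$ on $[1,\infty)$, $0\le \psi'$, and for a large parameter $C_0$ (to be chosen depending only on $\|u_0\|_{H^1}$) and a spatial scale $R$, set $\psi_{R}(x,t):=\psi\big(\tfrac{x-C_0 t}{R}\big)$. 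Define the localized mass $I_R(t):=\int u^2(x,t)\,\psi_R(x,t)\,dx$. I would differentiate $I_R$ in time, substitute the equation \eqref{ILW}, and integrate by parts; the time derivative of $\psi_R$ contributes $-\tfrac{C_0}{R}\int u^2 \psi'$, the transport term $\tfrac1\delta \partial_x u$ contributes $+\tfrac1{\delta R}\int u^2\psi'$, the nonlinearity $u\partial_x u$ contributes $+\tfrac1{3R}\int u^3\psi'$, and the dispersive term $\mathcal T_\delta \partial_x^2 u$ contributes a commutator term $\int u\,[\mathcal T_\delta\partial_x^2,\psi_R]u$ which, after symmetrization, should have a favorable (nonpositive up to lower order) principal part because $\mathcal T_\delta\partial_x$ has symbol $-2\pi\xi\coth(2\pi\delta\xi)\le 0$-ish sign structure reminiscent of $|\partial_x|$; this is exactly the ILW analogue of the Kato smoothing identity used in \cite{MMPP}, \cite{MPS}.

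The key algebraic step is to show that, for $C_0$ large enough, the sum of all the "bad" terms is dominated by a manifestly integrable-in-time good term coming from the smoothing. I expect the estimate to take the schematic form
\begin{equation}\label{virial-diff}
\frac{d}{dt} I_R(t) + \frac{c}{R}\int \big(u^2 + |q(\partial_x)u|^2\big)\psi'_R \,dx \;\le\; \frac{C}{R}\Big(\|u(t)\|_{H^{1/2}}^2 + \|u(t)\|_{L^\infty}\|u(t)\|_{L^2}^2\Big)\mathbf 1_{\{\text{tail}\}} + (\text{left-escaping terms})\le 0
\end{equation}
once $C_0 > \tfrac1\delta + C(\|u_0\|_{H^1})$, using that $\|u(t)\|_{H^1}$ is uniformly bounded (hypothesis) and Sobolev embedding $H^1\hookrightarrow L^\infty$ to absorb the cubic term. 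Integrating \eqref{virial-diff} from $0$ to $\infty$ gives both that $I_R$ is eventually decreasing (hence convergent) and that $\int_0^\infty \tfrac1R\int(u^2+|q u|^2)\psi'_R\,dt<\infty$. A standard Cauchy-in-$R$ / sliding-window argument (compare $I_R$ at scales $R$ and $2R$, or shift the window center) then forces $\lim_{t\to\infty}\int_{x\ge C_0 t} u^2\,dx = 0$: if the limit of $I_R(t)$ were a positive constant $\ell$ for every $R$, the uniform-in-$R$ bound on the time integral of the good term would be violated, because the good term controls a fixed fraction of $\ell/R$ over unit-length time intervals infinitely often.

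The main obstacle, and where the real work lies, is controlling the dispersive commutator $\int u\,[\mathcal T_\delta \partial_x^2, \psi_R]u\,dx$: unlike KdV where $[\partial_x^3,\psi]$ produces a clean positive term $3\int (\partial_x u)^2\psi'$ plus lower order, here $\mathcal T_\delta\partial_x^2$ is a \emph{nonlocal} operator of order $2$ whose symbol is not a polynomial, so the commutator is a pseudodifferential operator that must be handled by symbol calculus or by writing $\mathcal T_\delta\partial_x^2 = \partial_x\,(\mathcal T_\delta\partial_x)$ and using $\mathcal T_\delta\partial_x = -|D|\,\mathrm{cotanh}$-type factorization, exploiting $\coth(2\pi\delta\xi)\ge 1$ for $\xi>0$. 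I would extract from it a genuinely positive smoothing term $\sim \tfrac1R\int |q(\partial_x)u|^2\psi'_R$, where $q(\partial_x)$ is precisely the operator appearing in \eqref{a7} and Theorem \ref{energy-ILW} (roughly $|\partial_x|^{1/2}$ microlocally), and a remainder bounded by $\tfrac{C}{R^2}\|u\|_{L^2}^2$ plus a term controlled by $\|u\|_{H^{1/2}}^2$ localized in the window; the boundedness of $\|u(t)\|_{H^1}$ is what makes these remainders harmless. This commutator identity is essentially the content of the virial estimates proved for Theorems \ref{L2ILW}--\ref{energy-ILW}, so I would cite/adapt those computations rather than redo them, and the only new ingredient is tracking the sign gained from the moving frame at speed $C_0 > 1/\delta$.
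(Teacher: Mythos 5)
Your overall virial philosophy is the right one (weighted $L^2$ identity, sign of the smoothing term written through $q(\partial_x)$ as in \eqref{a7}, absorption of the cubic term by taking the speed large relative to $\|u_0\|_{H^1}$ via Sobolev), and indeed the paper's proof runs on exactly this identity (Lemma \ref{virial-right}, via Lemmas \ref{est-1} and \ref{est-3}). The genuine gap is your choice of weight: a translating window of \emph{fixed} width, $\psi\big(\tfrac{x-C_0t}{R}\big)$. With that weight the commutator remainders coming from the nonlocal operator $\lop\partial_x$ --- the analogues of $E_2$ and $E_3$ in \eqref{virial-1} --- are controlled, by Lemmas \ref{est-1} and \ref{est-3}, only in terms of Sobolev norms of derivatives of the weight times the \emph{global} $L^2$ norm of $u$, i.e. by $C\,R^{-3/2}\|u(t)\|_{L^2}^2$, a quantity that is constant in time and not localized to the window. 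It therefore cannot be absorbed by the localized good terms $\tfrac{c}{R}\int\big(u^2+|q(\partial_x)u|^2\big)\psi'_R\,dx$ (which may be negligible at any given time), so your schematic differential inequality with nonpositive right-hand side cannot hold, and integrating in time over $[t_1,\infty)$ produces an error growing linearly in $t_2-t_1$. Relatedly, your proposed upgrade from time-integrability of the good term to the full limit (the ``Cauchy-in-$R$ / sliding window'' contradiction) is not substantiated: time-integrability of a localized quantity by itself only yields decay along sequences of times, as in Theorem \ref{L2ILW}, not \eqref{main}.

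The paper avoids both problems by taking the self-similar weight $\varphi(x,t)=\chi\big(\tfrac{x-c_1}{c_0t}\big)$ as in \eqref{toright-1}: derivatives of the weight then scale like $(c_0t)^{-k}$, so the nonlocal remainders are $O\big((c_0t)^{-3/2}\big)\|u_0\|_2^2$, integrable on $[t_1,\infty)$ (see \eqref{toright-4}); the term $\partial_t\varphi\le -\tfrac1t\chi'$ dominates the nonlinear contribution $\tfrac{2\|u\|_\infty}{3c_0t}\int u^2\chi'$ once $c_0\gtrsim \|u_0\|_{1,2}$, giving the almost-monotonicity \eqref{toright-6}; and the full limit then follows by first choosing $t_1$ so the integrable error tail is below $\epsilon$, then the translation $c_1$ so the localized mass at time $t_1$ is below $\epsilon$, and finally setting $C_0=3c_0$. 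If you want to salvage your scheme, you must either let the window width grow with $t$ (which is essentially the paper's proof) or find commutator bounds for $[\lop\partial_x;\psi_R]$ that are themselves localized and time-decaying, which the available estimates \eqref{ilw-e16} and \eqref{est-2} do not provide.
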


\medskip

\begin{remark}\hskip10pt

\begin{enumerate}
\item \label{to-the-left} In \cite{MPS} it was studied the decay of the $L^2$-norm of the solution in the far left region.
It was proved that, despite the size of the data, linear waves have small influence in the region 
$\{-t\log^{1+\epsilon}t\ll x\le 0\}$ for $\epsilon>0$. See Theorem 1.6 in \cite{MPS} for the details.

\item Collecting the information in Theorems \ref{L2ILW}, \ref{to-the-right} and the latter remark one can deduce several estimates. In particular, one has: 
there exists $\,C_0=C_0(\|u_0\|_{H^1})>0$ and an increasing sequence of times $(t_n)_{n=1}^{\infty}$ with $t_n\uparrow \infty$ as $n\to \infty$ such that for any 
constants $c>0,\,\gamma>0$,
\begin{equation}
\label{123}
\liminf_{n\to \infty} \,\int_{\Omega(t_n)} |u(x,t_n)|^2\,dx=\|u_0\|_2^2,
\end{equation}
with
\[
\Omega(t): =\{x\in\R : -c\,t\,\log^{1+\gamma}t<x<-c\,t^{\frac23^{-}}\hskip5pt \text{or}\hskip7pt c\,t^{\frac23^{-}}\!\!\!<x<C_0\,t\}.
\]
\end{enumerate}
\end{remark}

%\begin{theorem}
%\label{to-the-left}
%For any constant $C_1>0$ and any $\eta>0$ the global in time solution 
%\[
%u\in C(\mathbb R:H^1(\mathbb R))\cap L^{\infty}(\mathbb R:H^1(\mathbb R))
%\]
% of IVP \eqref{ILW} satisfies
%\begin{equation}
%\label{main2}
%\lim_{t\to \infty} \| u(t)\|_{L^2(x\leq -C_1t \log^{1+\eta}t)}=0.
%\end{equation}
%\end{theorem}

The rest of this paper is organised as follows: Some preliminary results useful in our analysis will be presented in
Section 2. The proof of decay properties of solutions to the ILW equation in Theorem \ref{decay} is contained
in Section 3.  In Sections 4 and 5 we give the proofs of Theorem \ref{main-weights-small} and Theorem  \ref{main-weights}
which show the sharpness of the results in Theorem  \ref{decay}. Sections 6 contains the proof of the result concerning the
asymptotic decay of the solution in a domain moving in time in the right direction.  The asymptotic behavior of the solution
for any data in $L^2(\R)$ is given in Section 7. Finally, in Section 8 we sketch the proofs of the asymptotic dynamics 
of solutions to the ILW equation in the energy space and the noncentered $L^2(\R)$ case.

%%%%%%%%%%%%%%%%%%%%%%%%%%
\section{Preliminary Results}
%%%%%%%%%%%%%%%%%%%%%%%%%%

This section contains several estimates to be used in the proof of the results stated inn the introduction.

\subsection*{Classical $\Psi$.d.o's in $S^{^{m}}_{1,0}$}

The symbol class
\begin{equation}
\begin{split}
S^{^{m}}_{1,0}=\Big\{ & a\in C^{\infty}(\R^{2n})\,: \; \forall \alpha,\beta\in (\Z^{+})^n\\
&\hskip15pt\big|\partial_x^{\alpha}\partial_{\xi}^\beta a(x,\xi)\big| \le c_{\alpha,\beta} \big(1+|\xi|\big)^{^{m-|\beta|}}
\Big\}.
\end{split}
\end{equation}

\begin{lemma}\label{pseudo+weights}
 If $a\in S^{^{0}}_{1,0}$. Then for any $p\in(1,\infty)$ and for any $\rho\in \R$
 
 \begin{equation}\label{lem1-pseudo}
 a(x,D): L^p (\R^n: \,\ji x\jd^{\rho} \,dx) \to L^p (\R^n : \,\ji x\jd^{\rho} \,dx)
 \end{equation}
 with $\ji x\jd =(1+|x|^2)^{\frac12}$.
 \end{lemma}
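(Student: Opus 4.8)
The plan is to reduce the weighted bound to the classical (unweighted) Calderón–Zygmund theory for pseudodifferential operators of order zero, using the elementary observation that conjugating $a(x,D)$ by the weight $\ji x\jd^{\rho}$ produces another operator in the same symbol class modulo an operator one order lower. Concretely, set $T=a(x,D)$ and consider $T_\rho := \ji x\jd^{\rho}\, T\, \ji x\jd^{-\rho}$. The desired mapping property \eqref{lem1-pseudo} is precisely the statement that $T_\rho$ is bounded on the ordinary space $L^p(\R^n)$: indeed $f\mapsto \ji x\jd^{\rho} f$ is an isometry from $L^p(\ji x\jd^\rho\,dx)$ onto $L^p(dx)$, so $T$ is bounded on the former iff $T_\rho$ is bounded on the latter. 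Thus it suffices to prove $T_\rho\colon L^p(\R^n)\to L^p(\R^n)$.

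First I would record the standard fact that $\ji x\jd^{\rho}$ is itself a pseudodifferential operator in the $x$-variable only — more precisely, multiplication by the smooth function $m_\rho(x)=(1+|x|^2)^{\rho/2}$, all of whose derivatives satisfy $|\partial_x^\alpha m_\rho(x)|\le c_\alpha (1+|x|)^{\rho-|\alpha|}\le c_\alpha \ji x\jd^{|\rho|}$ — and then invoke the symbolic calculus for $\Psi$d.o.'s. The composition of $a(x,D)\in \mathrm{OP}S^0_{1,0}$ with the operator of multiplication by $m_\rho$, on either side, and with $m_{-\rho}=1/m_\rho$, is governed by the asymptotic expansion of the composition symbol; since $m_\rho m_{-\rho}\equiv 1$, the leading terms cancel and one obtains
\begin{equation}
T_\rho = a(x,D) + R,\qquad R\in \mathrm{OP}S^{-1}_{1,0}.
\end{equation}
Then $a(x,D)$ is bounded on $L^p(\R^n)$ for every $p\in(1,\infty)$ by the classical boundedness theorem for zero-order pseudodifferential operators (Calderón–Vaillancourt together with Fefferman–Stein / the Calderón–Zygmund theory, or the Hörmander–Mikhlin multiplier theorem combined with the calculus), and $R\in \mathrm{OP}S^{-1}_{1,0}\subset \mathrm{OP}S^0_{1,0}$ is a fortiori $L^p$-bounded. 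Summing, $T_\rho$ is bounded on $L^p$, which is what we needed.

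The one point requiring genuine care — and what I expect to be the main technical obstacle — is making the composition/conjugation argument rigorous uniformly in the (possibly large or negative) parameter $\rho$, because $m_\rho$ is not itself a symbol of order $0$ but of order $|\rho|$, so the intermediate operators $m_\rho a(x,D)$ and $a(x,D) m_{-\rho}$ lie in $S^{|\rho|}_{1,0}$ and $S^{-|\rho|}_{1,0}$ respectively, not in $S^0$; only their product and the subtracted principal term combine back to order $0$. One clean way to handle this is to write $R = \ji x\jd^{\rho} [\,a(x,D),\ji x\jd^{-\rho}\,] = \ji x\jd^{\rho}\big(a(x,D)\ji x\jd^{-\rho} - \ji x\jd^{-\rho} a(x,D)\big)$ and expand the commutator: the principal symbol of $[a(x,D),m_{-\rho}]$ is $\frac{1}{i}\partial_\xi a \cdot \partial_x m_{-\rho}$, which carries a factor $\partial_x m_{-\rho} = O(\ji x\jd^{-\rho-1})$, so after multiplying back by $\ji x\jd^{\rho}$ the net symbol decays like $\ji x\jd^{-1}(1+|\xi|)^{-1}$, and the full symbolic expansion with remainder (controlled, e.g., via the Kohn–Nirenberg quantization and an integration-by-parts/oscillatory-integral estimate for the remainder term) shows $R\in \mathrm{OP}S^{-1}_{1,0}$ with seminorms depending only on finitely many of the $c_{\alpha,\beta}$ and on $\rho$. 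An alternative, avoiding the calculus for unbounded-order symbols entirely, is a direct kernel estimate: the distribution kernel $K(x,y)$ of $a(x,D)$ is a Calderón–Zygmund kernel (smooth off the diagonal with $|K(x,y)|\lesssim |x-y|^{-n}$ and the usual gradient bounds), and one checks that $\big(m_\rho(x)/m_\rho(y)\big)$ stays comparable to $1$ on the diagonal region $|x-y|\le \tfrac12\ji x\jd$ while the far-diagonal contribution is handled by the decay of $K$ and the Schur test; this shows $T_\rho$ again has a Calderón–Zygmund kernel and is of weak type $(1,1)$ and bounded on $L^2$ (the latter by Calderón–Vaillancourt applied to its symbol), whence $L^p$-boundedness for $1<p<\infty$ by interpolation and duality. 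Either route finishes the proof; I would present the commutator/symbolic-calculus version as the primary argument and mention the kernel argument as a remark.
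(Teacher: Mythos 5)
Your proposal is correct in substance, but it takes a genuinely different route from the paper. Both arguments ultimately amount to conjugating $a(x,D)$ by the weight and invoking the classical unweighted $L^p$ theorem for zero-order operators, but the paper does the conjugation in a completely explicit, finite way: it first reduces to $\rho=2pl$ with $l\in\Z$ (so the conjugating weight is the even integer power $\langle x\rangle^{2l}$), and then for $l>0$ writes $\langle x\rangle^{2l}e^{ix\xi}$ as a finite sum of $\partial_\xi^{2j}e^{ix\xi}$ and integrates by parts in $\xi$, so that the weighted operator becomes an exact finite sum of zero-order operators composed with bounded multiplications (the $\xi$-derivatives falling on $\widehat{f}$ produce factors $x^m\langle x\rangle^{-2l}$, $m\le 2l$, which are bounded); for $l<0$ the integer power of $(1-\Delta_\xi)$ is moved onto $e^{ix\xi}a(x,\xi)\langle x\rangle^{-2l}$, producing a new symbol in $S^0_{1,0}$. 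There is no asymptotic expansion and no remainder; the price is the preliminary reduction to integer powers (an interpolation step) before reaching general $\rho$. Your argument instead treats arbitrary real $\rho$ at once via the commutator/symbolic expansion $\langle x\rangle^{\rho}a(x,D)\langle x\rangle^{-\rho}=a(x,D)+R$, or alternatively via Calder\'on--Zygmund kernel bounds plus the rapid off-diagonal decay of the kernel and a Schur test (the latter is genuinely needed, since $\langle x\rangle^{\rho}$ is an $A_p$ weight only in a restricted range of $\rho$, so plain weighted CZ theory would not suffice). What your route buys is generality and transparency (no reduction to special $\rho$, no interpolation); what it costs is that the expansion must be justified for a multiplier with polynomial growth in $x$, which lies outside the uniform classes $S^m_{1,0}$, so the standard composition theorem cannot be quoted verbatim: your statement that $m_\rho a(x,D)\in \mathrm{OP}S^{|\rho|}_{1,0}$ is not right as written (the growth is in $x$, not in $\xi$), and the remainder in the commutator expansion has to be estimated by hand, exactly in the spirit of the Taylor-expansion commutator lemma the paper proves later (Lemma \ref{lem3}) or of an SG-type calculus. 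You flag this issue and your proposed fixes (the gain $\partial_x^{\alpha}m_{-\rho}=O(\langle x\rangle^{-\rho-|\alpha|})$ compensating the outer factor $\langle x\rangle^{\rho}$, or the kernel argument) are sound.

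One small correction: the map realizing the isometry $L^p(\langle x\rangle^{\rho}dx)\to L^p(dx)$ is $f\mapsto \langle x\rangle^{\rho/p}f$, not $f\mapsto\langle x\rangle^{\rho}f$ (except when $p=1$), so the operator you must bound on $L^p(dx)$ is $\langle x\rangle^{\rho/p}a(x,D)\langle x\rangle^{-\rho/p}$. Since $\rho\in\R$ is arbitrary this changes nothing in your argument beyond relabeling the exponent, but as stated the reduction sentence is incorrect; note also that this is exactly why the paper's reduction is to $\rho=2pl$, i.e. to $\rho/p$ an even integer.
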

 
 \medskip
 
 \begin{remark}
\hskip10pt
 \begin{enumerate}
 \item
 The case $p=2$ and $\rho>0$ was proven in \cite{NP} (see also \cite{KPV98}).
 
 \item We recall that if
 $$
 T_af(x)= a(x,D)f(x)= c\int e^{ix\xi} a(x,\xi) \widehat{f}(\xi)\,d\xi,
 $$
 with $a\in S^{^{0}}_{1,0}$, then for any $p\in(1,\infty)$ there exists $c_p>0$ such that
 $$
 \| T_af\|_p\leq c_p \,\|f\|_p,
 $$
see \cite{St}.
\end{enumerate}
 
 \end{remark}

 \begin{proof}  To simplify the exposition we fix $n=1$. It suffices to prove it for $\rho=2pl$ with $l\in\Z$.  
 Since 
 $$
 a(x,D)f(x)= c\int e^{ix\xi} a(x,\xi) \widehat{f}(\xi)\,d\xi,
 $$
 to prove \eqref{lem1-pseudo} we need to show that
 
\begin{equation*}
 \int |a(x,D) f(x)|^p \ji x\jd^\rho\,dx \le c_{\rho}\int |f(x)|^p \ji x\jd^\rho\,dx
 \end{equation*}
 with $\rho=2pl$, $l\in\Z$.
 
 Thus, if $f(x) \ji x\jd^{2l}=g(x)$, we need to show 
 \begin{equation*}
 \int \Big| a(x,D)\Big(\frac{g(x)}{\ji x\jd^{2l}}\Big)\Big|^p \ji x\jd^{2pl}\,dx \le c_{\rho}\int |g(x)|^p \,dx,
 \end{equation*}
 i.e.
 \begin{equation*}
\big\| \int e^{ix\xi} a(x,\xi) \Big(\widehat{\frac{g(x)}{\ji x\jd^{2l}}}\Big)(\xi) \ji x\jd^{2l}\,d\xi\big\|_{p}^p\le c_{\rho} \|g\|_{p}^p.
\end{equation*} 

\medskip

\noindent{\underline{\bf Case 1:}} $l>0$.

\medskip

Observe that
\begin{equation*}
\ji x\jd^{2l} e^{ix\xi}= \underset{j=0}{\overset{l}{\sum}} c_{jl}\,\partial_{\xi}^{2j} \big(e^{ix\xi}\big)\hskip10pt \text{with\hskip5pt} c_{jl}=(-1)^j
\begin{pmatrix}
l\\
j
\end{pmatrix}.
\end{equation*}

Hence

\begin{equation*}
\begin{split}
&\int \ji x\jd^{2l} e^{ix\xi} a(x,\xi) \Big(\widehat{\frac{g(x)}{\ji x\jd^{2l}}}\Big)(\xi)\,d\xi\\
&= \underset{j=0}{\overset{l}{\sum}} c_{jl}\,\int \partial_{\xi}^{2j} \big(e^{ix\xi}\big)a(x,\xi) \Big(\widehat{\frac{g(x)}{\ji x\jd^{2l}}}\Big)(\xi)\,d\xi\\
&= \underset{j=0}{\overset{l}{\sum}} c_{jl}\,\int e^{ix\xi}\, \partial_{\xi}^{2j}\Big(a(x,\xi) \Big(\widehat{\frac{g(x)}{\ji x\jd^{2l}}}\Big)(\xi)\Big)\,d\xi
\end{split}
\end{equation*}
with
\begin{equation*}
\begin{split}
&\partial_{\xi}^{2j}\Big(a(x,\xi) \Big(\widehat{\frac{g(x)}{\ji x\jd^{2l}}}\Big)(\xi)\Big)\\
&=\underset{m=0}{\overset{2j}{\sum}}
\begin{pmatrix}
2j\\
m
\end{pmatrix}
\partial_{\xi}^{2j-m} a(x,\xi) \partial_{\xi}^{m}\Big(\widehat{\frac{g(x)}{\ji x\jd^{2l}}}\Big)(\xi)\\
&=\underset{m=0}{\overset{2j}{\sum}}
\begin{pmatrix}
2j\\
m
\end{pmatrix}
c_m^j\partial_{\xi}^{2j-m} a(x,\xi) \Big(\widehat{\frac{x^m g(x)}{\ji x\jd^{2l}}}\Big)(\xi)
\end{split}
\end{equation*}
with $m\le 2l$.

\medskip

Collecting the information above, using that $\partial_{\xi}^r a(x,D) \in S^{^0}_{1,0}$ and
\begin{equation*}
\Big\|\widehat{\frac{x^m g(x)}{\ji x\jd^{2l}}}\Big\|_p\le \|g\|_p
\end{equation*}
we obtain the case $l>0$.

\medskip

\noindent{\underline{\bf Case 2:}} $l<0$.

\medskip

We have that
\begin{equation*}
\begin{split}
&\int e^{ix\xi} a(x,\xi)\, \widehat{\ji x\jd^{2l} g}(\xi)\,\frac{d\xi}{\ji x\jd^{2l}}\\
&= \int \frac{e^{ix\xi} a(x,\xi)}{\ji x\jd^{2l}} (1-\Delta_{\xi})^{2l} \,\widehat{g}(\xi)\,d\xi\\
&= \int (1-\Delta_{\xi})^{2l}\Big(\frac{e^{ix\xi} a(x,\xi)}{\ji x\jd^{2l}}\Big)\, \widehat{g}(\xi)\,d\xi.
\end{split}
\end{equation*}

Since
\begin{equation*}
(1-\Delta_{\xi})^{2l}\Big(\frac{e^{ix\xi} a(x,\xi)}{\ji x\jd^{2l}}\Big) = e^{ix\xi} a_l (x,\xi)
\end{equation*}
with $a_l\in S^{^0}_{1,0}$, the result follows.
  
\end{proof}

\medskip

Next, we present some new interpolation estimates useful for our analysis which are interesting in
their own right. The main new ingredient in the proof is given by the result in Lemma \ref{pseudo+weights}.

\begin{lemma}\label{lem2-interpolation} Let $p\in (1,\infty)$. Let $a, b\in\R$. Then for any $\theta\in (0,1)$,
\begin{equation}\label{interpol-1}
\| \ji x\jd^{\theta b}J^{(1-\theta)a} f\|_p \le c_p\|\ji x\jd^{b}f\|_p^{\theta}\|J^a f\|_p^{1-\theta},
\end{equation}
with $$
J^af(x) =(1-\Delta)^{a/2}f,\;\;\;\;\;a\in\R.
$$
\end{lemma}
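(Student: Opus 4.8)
The plan is to reduce the weighted interpolation inequality \eqref{interpol-1} to the two endpoint cases $\theta=0$ and $\theta=1$ (which are trivial) together with a complex‑interpolation argument, using Lemma \ref{pseudo+weights} to absorb the pseudodifferential cost. First I would set up an analytic family of operators: for $z$ in the closed strip $\{0\le \Re z\le 1\}$, define
\[
T_z f := \ji x\jd^{z b}\,J^{(1-z)a}\,\big(\ji x\jd^{-b} + J^{-a}\big)^{-1} g ,
\]
or, more cleanly, work with $g=\ji x\jd^{b}f$ on the one side and $h=J^a f$ on the other and interpolate the bilinear/sublinear estimate directly. The cleanest route: given $f$, write $F = \ji x\jd^{b} f$ and $G = J^{a} f$, so that $f = \ji x\jd^{-b}F = J^{-a}G$; then
\[
\ji x\jd^{\theta b} J^{(1-\theta)a} f = \ji x\jd^{\theta b} J^{(1-\theta)a}\,\ji x\jd^{-\theta b} \,\ji x\jd^{-(1-\theta)b} f
\]
and one rewrites $f$ in a $\theta$‑dependent mixed way. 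To make the three‑lines theorem applicable I would instead introduce the analytic family
\[
S_z := \ji x\jd^{z b}\, J^{(1-z)a}\, J^{-a\,\overline{?}}\cdots
\]
— more precisely, fix $f$ nice (Schwartz, say, so all quantities are finite), set
\[
U_z f := \ji x\jd^{zb} J^{(1-z)a}\, \ji x\jd^{-b\,\chi}\, \cdots
\]

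Let me state the clean version. Fix $f\in\mathcal S(\R^n)$ and assume without loss that $\|\ji x\jd^b f\|_p=\|J^a f\|_p=1$ (by homogeneity/scaling of the inequality in $f$); it then suffices to bound $\|\ji x\jd^{\theta b}J^{(1-\theta)a}f\|_p\le c_p$. Consider, for $z$ in the strip $0\le \Re z\le 1$,
\[
\Phi(z) := \int \big(\ji x\jd^{z b} J^{(1-z)a} f\big)(x)\; \overline{\psi(x)}\;\mathrm dx ,
\]
where $\psi$ ranges over $\mathcal S$ with $\|\psi\|_{p'}\le 1$, after inserting an appropriate normalization making $\Phi$ bounded and analytic on the strip and continuous up to the boundary (the factors $\ji x\jd^{zb}$ and $J^{(1-z)a}$ are analytic in $z$ as families of operators on test functions, and decay in $\Im z$ can be forced by the standard trick of multiplying by $e^{\epsilon(z^2-\theta^2)}$ and letting $\epsilon\to0$ at the end). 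On the line $\Re z=0$, $z=it$, we have $\ji x\jd^{itb}=a_t(x,D)$ with $a_t\in S^0_{1,0}$ uniformly for $t$ in compact sets — actually $\ji x\jd^{itb}$ is a multiplication operator with symbol independent of $\xi$ and $|\ji x\jd^{itb}|=1$ pointwise, so it is an $L^p$ isometry — and $J^{(1-it)a}=J^{a}J^{-ita}$ with $J^{-ita}$ an $S^0_{1,0}$ pseudodifferential operator bounded on $L^p$ by the remark after Lemma \ref{pseudo+weights}; hence $\|U_{it}f\|_p\lesssim \|J^a f\|_p=1$. On the line $\Re z=1$, $z=1+it$, symmetrically $J^{(1-z)a}=J^{-ita}$ is an $L^p$‑bounded $S^0_{1,0}$ operator and $\ji x\jd^{(1+it)b}=\ji x\jd^{b}\ji x\jd^{itb}$, so one gets $\lesssim \|\ji x\jd^{b}f\|_p=1$. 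Here is the one place Lemma \ref{pseudo+weights} itself (rather than just the unweighted $S^0_{1,0}$ boundedness) is needed: when one commutes $\ji x\jd^{itb}$ past $J^a$ one must know the resulting operator is bounded on the weighted space, and more to the point, in the version where weights and derivatives do not separate cleanly one writes $\ji x\jd^{itb}J^{a}\ji x\jd^{-b}=\ji x\jd^{itb}\big(J^a\ji x\jd^{-b}\big)$ and needs $J^a\ji x\jd^{-b}$, composed with $\ji x\jd^{\text{stuff}}$, to map an unweighted $L^p$ to itself — which is exactly the content of Lemma \ref{pseudo+weights} applied to a symbol in $S^0_{1,0}$ conjugated by powers of $\ji x\jd$. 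By Stein's interpolation theorem for analytic families, $\|U_\theta f\|_p\le c_p$, i.e. \eqref{interpol-1}, and a density argument extends it from $\mathcal S$ to all $f$ for which the right side is finite.

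The main obstacle I anticipate is the bookkeeping needed to make Stein interpolation genuinely applicable: one must verify that $z\mapsto U_z f$ (tested against a Schwartz function) is analytic in the open strip and continuous and of admissible growth on the closed strip, and that the operator‑norm bounds on the two boundary lines are \emph{locally uniform in $t=\Im z$} with at most admissible (subexponential, after the $e^{\epsilon z^2}$ trick) growth. The delicate point is that $J^{ita}$ has operator norm on $L^p$ that grows polynomially in $t$ (with $p$‑dependent rate), which is harmless for Stein's theorem, but one should record it; similarly $\ji x\jd^{itb}$, being a pure modulation in physical space, is an exact isometry on every $L^p$ and on every weighted $L^p(\ji x\jd^\rho)$, which is what keeps the constants under control. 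A secondary, purely technical nuisance is the non‑integer/negative exponents in $\ji x\jd^{zb}$: for these to be bona fide $S^0_{1,0}$‑type symbols after the relevant compositions one invokes exactly Lemma \ref{pseudo+weights} and the composition calculus for $S^0_{1,0}$, so no new analysis is required — only care. Once these routine verifications are in place, \eqref{interpol-1} follows immediately from the three‑lines lemma.
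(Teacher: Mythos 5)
Your proposal is essentially the paper's own proof: the authors also dualize against $g\in L^{p'}$, form the analytic family $z\mapsto e^{z^2-1}\ji x\jd^{bz}J^{a(1-z)}f$ on the strip $0\le\Re z\le 1$, bound the line $\Re z=0$ by the unweighted $L^p$-boundedness of the zero-order operator $J^{iay}$ and the line $\Re z=1$ by $\|\ji x\jd^{b}J^{iay}f\|_p\lesssim (1+(ay)^2)^k\|\ji x\jd^{b}f\|_p$ via Lemma \ref{pseudo+weights}, and conclude by the three lines theorem. The only cosmetic difference is that your identification of exactly where Lemma \ref{pseudo+weights} enters is stated somewhat loosely (it is precisely the weighted $L^p$ bound for the purely imaginary-order operator $J^{iay}$ on the line $\Re z=1$), but the argument is the same.
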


\begin{remark} The case $p=2$ and $a, b>0$ was proven in \cite{NP}.
\end{remark}

\begin{proof} It follows the argument in \cite{NP}, which is based in the Three Lines  Theorem. Define
\begin{equation*}
F(z)=\int e^{(z^2-1)} \ji x\jd^{bz} J^{a(1-z)} f(x) \overline{g(x)}\,dx
\end{equation*}
with $g\in L^q(\R^n)$, $\|g\|_q=1$ with $1/p+1/q=1$.

Let $D=\{z=\eta+iy \; | \; 0\le \eta \le 1 \}$.

$F$ is continuous in $D$ and analytic in its interior with
\begin{equation}\label{interpol-2}
|F(0+iy)|\le c e^{-(y^2+1)} (1+(ay)^2)^k \|J^af\|_p
\end{equation}
for some $k=k(n,p)\in \Z^{+}$ (using that $J^{iay}$ is a $\Psi$.d.o of order zero and Lemma \ref{pseudo+weights})

On the other hand,
\begin{equation}\label{interpol-3}
\begin{split}
|F(1+iy)|&\le c e^{-y^2} \|\ji x\jd^b J^{iay}f\|_p\\
&\le c e^{-y^2}(1+(ay)^2)^k \|\ji x\jd^b f\|_p
\end{split}
\end{equation}
by the previous lemma.

Combining \eqref{interpol-2} and \eqref{interpol-3} we get the desired result.
\end{proof}

Next, we present an extension of the preceding interpolation result.

\begin{lemma}\label{lem2b-interpolation} Let $p\in (1,\infty)$. Let $a,b,c,d\in\R$. Then for $\theta\in[0,1]$ it holds that
\begin{equation}\label{interpol-complete}
\|\japa^{\theta a+(1-\theta)c}J^{\theta b+(1-\theta)d} f\|_p \le c_p\|\japa^{a}J^{b} f\|_p^{\theta}\|\japa^{c}J^{d} f\|_p^{(1-\theta)}.
\end{equation}
\end{lemma}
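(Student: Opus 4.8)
The plan is to deduce Lemma \ref{lem2b-interpolation} from Lemma \ref{lem2-interpolation} by reducing the two–parameter statement to the one–parameter one through a change of the underlying function. First I would observe that the claimed inequality is trivial at the endpoints $\theta=0$ and $\theta=1$, so we may assume $\theta\in(0,1)$. The natural move is to set $h=\japa^{c}J^{d}f$, equivalently $f=J^{-d}\japa^{-c}h$, so that the right-hand factor $\|\japa^{c}J^{d}f\|_p$ becomes simply $\|h\|_p$. With this substitution the quantity we must bound on the left is $\|\japa^{\theta a+(1-\theta)c}J^{\theta b+(1-\theta)d}J^{-d}\japa^{-c}h\|_p$, and the first factor on the right becomes $\|\japa^{a}J^{b}J^{-d}\japa^{-c}h\|_p$. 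So it suffices to prove
\begin{equation*}
\|\japa^{\theta(a-c)}J^{\theta(b-d)}\,T\,h\|_p\le c_p\,\|\japa^{a-c}J^{b-d}\,T\,h\|_p^{\theta}\,\|h\|_p^{1-\theta},
\end{equation*}
where $T=J^{b-d}{}^{-1}\cdots$ — more precisely, writing $A=a-c$, $B=b-d$, and $u=Th$ with $T=\japa^{c}J^{d}J^{-d}\japa^{-c}=\mathrm{Id}$ after commuting... here lies the subtlety, since $\japa$ and $J$ do not commute.

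The cleaner route, which I expect to be the one actually used, is to apply Lemma \ref{lem2-interpolation} not to $f$ but to $g:=\japa^{c}J^{d}f$, after first checking that $\japa$-weights and $J$-derivatives can be commuted at the cost of operators in $S^0_{1,0}$, which are bounded on every $L^p(\japa^\rho dx)$ by Lemma \ref{pseudo+weights}. Concretely: $\japa^{A}J^{B}f = \japa^{A}J^{B}J^{-d}\japa^{-c}g$, and one commutes $J^{-d}$ past $\japa^{-c}$ writing $J^{-d}\japa^{-c}=\japa^{-c}J^{-d} + [\text{lower order}]$, so that $\japa^{A}J^{B}J^{-d}\japa^{-c} = \japa^{a-c}\japa^{-c}\cdots$; iterating such commutations one reduces, modulo zero–order $\Psi$d.o.'s composed with the right weights, to an expression of the form $\japa^{a-c}J^{b-d}g$ on the one hand and $\japa^{\theta a+(1-\theta)c}J^{\theta b+(1-\theta)d}J^{-d}\japa^{-c}g$ which similarly reduces to $\japa^{\theta(a-c)}J^{\theta(b-d)}g$ on the other. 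Then Lemma \ref{lem2-interpolation} applied with exponents $b\rightsquigarrow a-c$, $a\rightsquigarrow b-d$ and the same $\theta$ gives
\begin{equation*}
\|\japa^{\theta(a-c)}J^{(1-\theta)(b-d)}g\|_p\le c_p\|\japa^{a-c}g\|_p^{\theta}\|J^{b-d}g\|_p^{1-\theta},
\end{equation*}
wait — the powers of $\japa$ and $J$ in \eqref{interpol-complete} move together, not oppositely as in \eqref{interpol-1}, so one should instead apply the Three Lines argument directly with the analytic family $F(z)=\int e^{z^2-1}\japa^{(\theta a+(1-\theta)c)+z(\cdots)}J^{(\cdots)}f\,\overline{g}\,dx$ interpolating between the two data $\japa^{a}J^{b}f$ at one boundary and $\japa^{c}J^{d}f$ at the other. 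That is in fact the most economical proof: mimic the proof of Lemma \ref{lem2-interpolation} verbatim, replacing the endpoint operators $J^a$ and $\japa^b$ there by $\japa^{a}J^{b}$ and $\japa^{c}J^{d}$ here.

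So the step-by-step plan I would execute is: (1) dispose of $\theta\in\{0,1\}$; (2) define $F(z)=\int e^{z^2-1}\,\japa^{\,za+(1-z)c}\,J^{\,zb+(1-z)d}\,f(x)\,\overline{g(x)}\,dx$ for $\|g\|_q=1$, $z$ in the strip $D=\{0\le\Re z\le 1\}$; (3) verify $F$ is continuous on $D$, analytic in the interior, and bounded, using that $\japa^{iy\cdot}$-type operators and $J^{iy\cdot}$-type operators are, after conjugation, classical $\Psi$d.o.'s of order zero — here the two–sided boundedness on $L^p(\japa^\rho dx)$ from Lemma \ref{pseudo+weights} together with the polynomial-in-$y$ operator norm bounds is exactly what is needed; (4) get $|F(iy)|\le c\,e^{-(y^2+1)}(1+|y|)^{2k}\|\japa^{c}J^{d}f\|_p$ and $|F(1+iy)|\le c\,e^{-y^2}(1+|y|)^{2k}\|\japa^{a}J^{b}f\|_p$; (5) apply the Three Lines Theorem at $z=\theta$ and optimize over $g$. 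The main obstacle is step (3)–(4): one must control the operator norms of the imaginary-power pieces $\japa^{iy\alpha}$ and $J^{iy\beta}$ on the weighted $L^p$ spaces with a bound growing at most polynomially in $|y|$, and one must correctly track that the commutators arising when sliding a $\japa$-weight past a $J^{iy}$ (or vice versa) remain in $S^0_{1,0}$ with seminorms polynomial in $y$; this is where Lemma \ref{pseudo+weights} is used in an essential way and where the bookkeeping of the Gaussian factor $e^{z^2-1}$ is needed to absorb the polynomial growth so that $F$ stays bounded on the closed strip.
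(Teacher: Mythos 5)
Your final plan (steps (1)--(5)) is exactly the paper's proof: one defines $F(z)=\int e^{(z^2-1)}\japa^{za+(1-z)c}J^{zb+(1-z)d}f\cdot g\,dx$ on the strip $0\le\Re z\le 1$, bounds the boundary values by $\|\japa^{c}J^{d}f\|_p$ and $\|\japa^{a}J^{b}f\|_p$ up to polynomial-in-$y$ factors coming from the imaginary powers (absorbed by the Gaussian factor, with Lemma \ref{pseudo+weights} giving weighted $L^p$ boundedness of the zero-order pieces), and concludes by the Three Lines Theorem at $z=\theta$. The initial detour through a substitution reducing to Lemma \ref{lem2-interpolation} is correctly abandoned by you; what remains matches the paper's argument, so the proposal is correct and essentially the same.
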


\begin{proof} Let $g\in L^q(\R)$, $\|g\|_q=1$ with $1/p+1/q=1$. Consider
\begin{equation*}
F(z)=\int e^{(z^2-1)}\japa^{za+(1-z)c} J^{zb+(1-z)d} f\cdot g\,  dx.
\end{equation*}

$F$ is continuous on $\Omega=\{z=\eta+i y\;:\; 0\le\eta\le1\}$ and analytic in the interior of $\Omega$, i.e.
 $\overset{\circ}{\Omega}$.

For some $k= k(n)$, we have that

\begin{equation*}
|F(0+iy)|\le e^{-(y^2+1)} (1+(by)^2+(dy)^2)^k\|\japa^c J^df\|_p
\end{equation*}
and 
\begin{equation*}
|F(1+iy)|\le c \,e^{-(y^2+1)} (1+(by)^2+(dy)^2)^k\|\japa^a J^bf\|_p
\end{equation*}
where we have used that $J^{iy}$ is a $\Psi$.d.o of order zero.

From the general form of the three lines Theorem we obtain the desired result \eqref{interpol-complete}.
\end{proof}

\vspace{3mm}

\subsection*{Properties of symbol associated to ILW equation} \hskip10pt

\vspace{3mm}

Using the same notation as in \cite{MPS}, we write
\begin{equation}\label{ilw-oper}
\sigma\Big(\mathcal T_{\delta}\partial_x^2+\frac{\partial_x}{\delta}\Big)(\xi)=-i\Big((2\pi\xi)^2\coth(2\pi\delta \xi)-\frac{2\pi\xi}{\delta}\Big):=-i \Omega_{\delta}(2\pi\xi).
\end{equation}

Fix $\delta\equiv 1$ to simplify the exposition. Thus
\begin{equation}\label{omega1}
\Omega_1(\xi)=\begin{cases}
\dfrac{\xi^3}{3}+O(\xi^5), \hskip40pt |\xi|\downarrow 0,\\
\\
\xi|\xi|-\xi+O(1), \hskip20pt |\xi|\uparrow \infty,
\end{cases}
\end{equation}
with
\begin{equation}\label{der-omega1}
\Omega'_1(\xi)=\begin{cases}
\xi^2+O(\xi^4), \hskip43pt |\xi|\downarrow 0,\\
\\
2|\xi|-1+O(\frac{1}{\xi}), \hskip20pt |\xi|\uparrow \infty,
\end{cases}
\end{equation}
and $\Omega'_1(\cdot)$ even, $\Omega_1'(0)=\Omega_1''(0)=0$.

$$
\Omega'_1(\xi)=\xi^2-\dfrac{\xi^4}{9}+O(|\xi|^6),\hskip10pt |\xi|\downarrow 0.
$$

Hence
\begin{equation*}\label{p-symbol}
p(\xi)=\sqrt{\Omega'_1(\xi)}= \xi\Big(1-\frac{\xi^2}{9}\Big)^{\frac12} + O(|\xi|^3) \hskip5pt\text{as}\hskip5pt |\xi|\downarrow 0.
\end{equation*}

($p(\xi)$ smooth around zero)
$$
p(\xi)\simeq \sqrt{2} \sqrt{|\xi|}\,\sign(\xi) \hskip15pt |\xi|\uparrow \infty
$$
then we can see that $p(\xi)$ is smooth and odd with $\Omega'(\xi)=p(\xi)p(\xi)$.

Next, as in \cite{MPS} we define the operator $\,q(\partial_x)$ whose symbol $\,q(\xi)$ is the square root of  $\Omega_{1}'(\xi)$, i.e.
\be
\label{a7}
\Omega'_{1}(\xi)=q(\xi)\,q(\xi),
\ee
such that $q(\cdot)$ is even, Lipschitz, with $q(\xi)>0$ for $\xi>0$. Hence,
\begin{equation}
\label{a8}
q(\xi)= \xi+O\Big(\xi^3\Big)\;\;\;\;\;\text{as}\;\;\;\;\;\xi\downarrow 0,
\end{equation}
and
\begin{equation}
\label{a9}
q(\xi)=\sqrt{\xi}\Big(1-\frac{1}{2\xi}+O(\xi^{-2})\Big)\;\;\;\;\;\text{as}\;\;\;\;\;\xi\uparrow \infty,
\end{equation}
with
\be
\label{a10}
q(0)=0\;\;\;\;\;\;\text{and}\;\;\;\;\;\;q'(0^+)=1.
\ee

We also use the following notation,
\begin{equation}\label{operatorL}
-\mathcal{L}(\partial_x)\partial_x= \mathcal{T}\partial_x^2+\partial_x,
\end{equation}
see \eqref{ilw-oper}.

\medskip

\subsection*{Commutators} \hskip15pt

\vspace{3mm}

We start this section by establishing a useful commutator estimate which is fundamental in our analysis.

Suppose $\varphi(\cdot)$ is a weight (smooth) with $\varphi(x)\simeq \ji x\jd^m$, for some $\;m>0$. 

Let $Q(D)$ be a $\Psi$.d.o. in $S^l_{1,0}(\R)$ given by
\begin{equation}\label{operator-q}
Q(D)f(x)=c\int e^{ix\xi} Q(\xi) \widehat{f}(\xi)\, d\xi.
\end{equation}

In particular, $Q(\xi)$ is smooth. 

\begin{lemma}\label{lem3}
Consider the operator $Q$ defined in \eqref{operator-q}. Then for $k>l$ and $k\ge m$ it holds that
\begin{equation}\label{lem3-comm}
[Q(\partial_{\xi});\varphi(x)]f(x)= \underset{j=1}{\overset{k}\sum} \varphi^{(j)} Q^{(j)}(\partial_{\xi})f+\mathcal{R}_kf
\end{equation}
where
\begin{equation}\label{lem3-comm-1}
\|\mathcal{R}_kf\|_2\le c \|\varphi^{(k)}\|_{1,2}\|f\|_2.
\end{equation}
\end{lemma}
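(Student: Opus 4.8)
The plan is to expand the commutator $[Q(\partial_\xi);\varphi(x)]$ via a Taylor-type expansion in the symbol calculus and control the remainder in $L^2$ using the weighted pseudodifferential boundedness of Lemma \ref{pseudo+weights}. I write $Q(\partial_x)$ (the paper's ``$Q(\partial_\xi)$'' is the multiplier operator with symbol $Q(\xi)$) and recall that $\varphi$ is a smooth weight comparable to $\langle x\rangle^m$. The starting point is the exact integral (``oscillatory'') representation of the composition $\varphi\, Q(\partial_x)$ and $Q(\partial_x)\,\varphi$; subtracting, one finds that the symbol of $[Q(\partial_x);\varphi]$ is given by the standard formula whose finite Taylor expansion in powers of $D_\xi$ produces the terms $\sum_{j=1}^{k}\varphi^{(j)}Q^{(j)}(\partial_x)$ (up to the combinatorial constants absorbed into the definition), with an integral remainder. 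Concretely, one writes, for $f$ Schwartz,
\begin{equation*}
[Q(\partial_x);\varphi(x)]f(x)=c\int e^{ix\xi}Q(\xi)\widehat{\varphi f}(\xi)\,d\xi-\varphi(x)\,c\int e^{ix\xi}Q(\xi)\widehat f(\xi)\,d\xi,
\end{equation*}
and uses that $\widehat{\varphi f}(\xi)=\widehat{\varphi}\ast\widehat f\,(\xi)$ together with a Taylor expansion of $\varphi$ around $x$, i.e. $\varphi(y)=\sum_{j=0}^{k-1}\frac{\varphi^{(j)}(x)}{j!}(y-x)^j+\frac{1}{(k-1)!}\int_x^y \varphi^{(k)}(t)(y-t)^{k-1}\,dt$. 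The factors $(y-x)^j$ turn into $\partial_\xi^j$ hitting $e^{ix\xi}$, which after integration by parts become $Q^{(j)}$ applied to $f$; this is exactly how one gets \eqref{lem3-comm}.

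The next step is to identify the remainder $\mathcal R_k f$ with the operator whose kernel comes from the integral form of the Taylor remainder, schematically
\begin{equation*}
\mathcal R_k f(x)= c\int\!\!\int e^{i(x-y)\xi}\,r_k(x,y)\,Q^{(k)}(\xi)\,f(y)\,dy\,d\xi,\qquad
r_k(x,y)=\frac{1}{(k-1)!}\int_0^1 \varphi^{(k)}(x+\tau(y-x))(1-\tau)^{k-1}\,d\tau,
\end{equation*}
and then to estimate $\|\mathcal R_k f\|_2$. Here one uses $k>l$ so that $Q^{(k)}\in S^{l-k}_{1,0}\subset S^{-1}_{1,0}$, hence $Q^{(k)}(\partial_x)$ maps $L^2\to L^2$ (indeed it gains regularity); and one uses the uniform bound $|\varphi^{(k)}(x+\tau(y-x))|\lesssim \langle x+\tau(y-x)\rangle^{m-k}\lesssim 1$ when $k\ge m$, so the weight factor is harmless. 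Peetre's inequality $\langle x+\tau(y-x)\rangle^{m-k}\lesssim \langle x\rangle^{m-k}\langle y-x\rangle^{|m-k|}$ together with the decay of the kernel of $Q^{(k)}(\partial_x)$ lets one sum/integrate in $y$. The hypothesis \eqref{lem3-comm-1} is stated with the norm $\|\varphi^{(k)}\|_{1,2}$ (presumably $\|\varphi^{(k)}\|_{L^1}+\|\varphi^{(k)}\|_{L^2}$, or a Sobolev-type norm), so the cleanest route is to write $\mathcal R_k$ as a genuine composition — e.g. $\mathcal R_k = \tilde Q(\partial_x)\circ(\text{multiplication by a bounded function built from }\varphi^{(k)})\circ(\text{smoothing})$, or to view the $x$- and $y$-dependence of $r_k$ separately — and invoke Lemma \ref{pseudo+weights} (boundedness of $S^0_{1,0}$ operators on weighted $L^p$) to absorb any polynomial growth created in the splitting, trading it against the $-1$ order gain of $Q^{(k)}$.

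I would carry out the steps in this order: (1) fix $f\in\mathcal S$ and write the exact oscillatory-integral formula for the commutator; (2) Taylor-expand $\varphi$ to order $k$ and read off the main terms $\varphi^{(j)}Q^{(j)}(\partial_x)f$ for $1\le j\le k$ together with the integral remainder; (3) organize the remainder as an operator with explicit amplitude $r_k(x,y)Q^{(k)}(\xi)$; (4) estimate it in $L^2$ using that $Q^{(k)}\in S^{l-k}_{1,0}$ with $k>l$ (order $\le -1$), the bound $|\varphi^{(k)}|\lesssim\langle\cdot\rangle^{m-k}$ with $k\ge m$, Peetre's inequality, and Lemma \ref{pseudo+weights} to handle the weights; (5) conclude \eqref{lem3-comm-1} and extend from Schwartz $f$ to $L^2$ by density. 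The main obstacle I anticipate is step (4): making the dependence on $\varphi^{(k)}$ appear precisely through the norm $\|\varphi^{(k)}\|_{1,2}$ rather than through an $L^\infty$ bound — this forces one to be careful about which variable ($x$ vs. the integration variable in the Taylor remainder) carries the derivative of $\varphi$, and to use the smoothing of order one coming from $Q^{(k)}$ (when $k>l$, so $l-k\le -1$) to convert an $L^1$-in-one-variable bound on $\varphi^{(k)}$ into an $L^2\to L^2$ operator bound, e.g. via Young's inequality on the kernel or via writing the remainder as a superposition of translations of an $S^{-1}_{1,0}$ operator weighted by $\varphi^{(k)}$.
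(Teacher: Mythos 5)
Your expansion step is the mirror image of the paper's: you Taylor-expand the weight $\varphi$ in the space variable, whereas the paper works entirely on the Fourier side, writing
\begin{equation*}
\big([Q(\partial_{\xi});\varphi]f\big)^{\widehat{\hskip3pt}}(\xi)=\int \widehat{\varphi}(\xi-\eta)\big(Q(\xi)-Q(\eta)\big)\widehat{f}(\eta)\,d\eta
\end{equation*}
and Taylor-expanding the symbol $Q$ about $\eta$; the factors $(\xi-\eta)^j\widehat{\varphi}(\xi-\eta)$ become $\widehat{\partial_x^j\varphi}$, which yields the same main terms $\varphi^{(j)}Q^{(j)}(\partial_{\xi})f$. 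The payoff of the paper's ordering is that the remainder is immediately a frequency-side convolution with amplitude $Q^{(k+1)}(\theta\xi+(1-\theta)\eta)\,\widehat{\partial_x^{k+1}\varphi}(\xi-\eta)$, so Plancherel, Young's inequality, the bound $\|Q^{(k+1)}\|_{\infty}<\infty$ (this is where $k>l$ enters) and the one-dimensional embedding $\|\widehat{g}\|_{L^1}\lesssim \|g\|_{1,2}$ give the stated estimate in two lines; the hypothesis $k\ge m$ is used only to guarantee that the relevant derivative of $\varphi$ lies in $L^2$.

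The soft spot in your version is exactly the step you flag: your remainder is an oscillatory integral with the genuinely two-variable amplitude $r_k(x,y)$, and the estimate you actually write out (pointwise $|\varphi^{(k)}|\lesssim 1$ via $k\ge m$, Peetre, decay of the kernel of $Q^{(k)}(\partial_x)$) would at best produce a bound by $\|\varphi^{(k)}\|_{\infty}\|f\|_2$, not the stated $\|\varphi^{(k)}\|_{1,2}\|f\|_2$; moreover $Q^{(k)}\in S^{l-k}_{1,0}$ with $l-k\le -1$ is not an integrable symbol, so the kernel bounds you invoke need a separate justification of the local singularity before any Schur/Young argument applies. The repair is precisely the last alternative you mention: expand $\varphi^{(k)}(x+\tau(y-x))=c\int \widehat{\varphi^{(k)}}(\zeta)\,e^{i\zeta(1-\tau)x}e^{i\zeta\tau y}\,d\zeta$, so that $\mathcal{R}_k$ becomes a superposition, weighted by $\widehat{\varphi^{(k)}}(\zeta)$, of modulations of the $L^2$-bounded multiplier $Q^{(k)}(\partial_x)$; Minkowski's inequality then gives $\|\mathcal{R}_kf\|_2\lesssim \|Q^{(k)}\|_{\infty}\|\widehat{\varphi^{(k)}}\|_{L^1}\|f\|_2\lesssim \|\varphi^{(k)}\|_{1,2}\|f\|_2$, which is the paper's computation in disguise. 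Two further remarks: Lemma \ref{pseudo+weights} is not needed anywhere in this proof (the paper does not use it here), and your indexing (expanding to order $k-1$ with remainder in $\varphi^{(k)}$ versus the sum running to $j=k$) should be tightened, although the paper's own proof is equally loose on that point.
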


\begin{proof} We notice first that

\begin{equation}\label{lem3-comm-2}
\begin{split}
&\Big([Q(\partial_{\xi});\varphi(x)]f(x)\Big)^{\widehat{\hskip3pt}}(\xi)\\
&=Q(\xi)\widehat{\varphi f}(\xi)-\widehat{\varphi(x)Q(\partial_{\xi})f}(\xi)\\
&=Q(\xi)(\widehat{\varphi}*\widehat{f})(\xi)-(\widehat{\varphi}*\widehat{Q(\partial_{\xi})f})(\xi)\\
&=\int Q(\xi) \widehat{\varphi}(\xi-\eta)\widehat{f}(\eta)\,d\eta- 
\int Q(\eta)\widehat{\varphi}(\xi-\eta)\widehat{f}(\eta)\,d\eta\\
&=\int \widehat{\varphi}(\xi-\eta)\big(Q(\xi)-Q(\eta)\big) \widehat{f}(\eta)\,d\eta.
\end{split}
\end{equation}

By a Taylor expansion we find that
\begin{equation*}
\begin{split}
Q(\xi)-Q(\eta)&=\underset{j=1}{\overset{k}{\sum}} c_j \,Q^{(j)}(\eta)(\xi-\eta)^j\\
&\hskip15pt+ c_{k+1} Q^{(k+1)}(\theta\xi+(1-\theta)\eta)(\xi-\eta)^{k+1}
\end{split}
\end{equation*}
with $\theta\in [0,1]$, $\theta=\theta(\xi,\eta)$. Inserting the identity above in the last expression
on the right hand side of \eqref{lem3-comm-2} we get
\begin{equation}\label{lem3-comm-3}
\begin{split}
&\Big([Q(\partial_{\xi});\varphi(x)]f(x)\Big)^{\widehat{\hskip3pt}}(\xi)\\
&=\underset{j=1}{\overset{k}{\sum}} c_j \int (\xi-\eta)^j  \widehat{\varphi}(\xi-\eta) Q^{(j)}(\eta)\widehat{f}(\eta)\,d\eta\\
&\hskip15pt+ c_{k+1} \int Q^{(k+1)}(\theta\xi+(1-\theta)\eta)(\xi-\eta)^{k+1}\widehat{\varphi}(\xi-\eta)\widehat{f}(\eta)\,d\eta\\
&=\underset{j=1}{\overset{k}{\sum}} c_j \int \widehat{\partial_x^j\varphi}(\xi-\eta) \widehat{\big(Q^{(j)}(\partial_{\xi})f\big)}(\eta)\,d\eta + \mathcal{R}_{k+1}f\\
&=\underset{j=1}{\overset{k}{\sum}} c_j \widehat{\big(\partial_x^j\varphi \cdot Q^{(j)}(\partial_{\xi})f\big)}(\xi)
+ \mathcal{R}_{k+1}f.
\end{split}
\end{equation}

Thus
\begin{equation}\label{lem3-comm-4}
\begin{split}
[Q(\partial_{\xi});\varphi(x)]f(x)= \underset{j=1}{\overset{m}{\sum}} c_j \partial_x^j\varphi \,Q^{(j)}(\partial_{\xi})f
+ \widehat{\mathcal{R}_{k+1}f}
\end{split}
\end{equation}
where
\begin{equation*}
\mathcal{R}_{k+1}f(x)=c_{k+1} \int Q^{(k+1)}(\theta\xi+(1-\theta)\eta)\,\widehat{\partial_x^{k+1}\varphi}(\xi-\eta)\widehat{f}(\eta)\,d\eta.
\end{equation*}

Suppose $\|Q^{(k+1)}\|_{\infty}\le M$ (it is sufficed to take $k+1\ge l$ because $Q\in S^l_{1,0}$), then
\begin{equation*}
|\mathcal{R}_{k+1}f(x)|\le |c_{k+1}|\, M\,\int|\widehat{\partial_x^{k+1}\varphi}(\xi-\eta)|\,|\widehat{f}(\eta)|\,d\eta.
\end{equation*}
Thus by employing Plancherel identity and Sobolev we have that
\begin{equation*}
\begin{split}
\|\mathcal{R}_{k+1}f\|_2&\le c \|\widehat{f}\|_2\|\widehat{\partial_x^{k+1}\varphi}\|_1\\
&\le c \|\partial_x^{k+1}\varphi\|_{1,2}\|f\|_2.
\end{split}
\end{equation*}

We need $\partial_x^{k+1}\varphi\in L^2$. Since $\varphi \simeq \ji x\jd^m$, it is suffices to take $k\ge m$.
\end{proof}

Next, we present a commutator estimate established in \cite{KaPo}, and an extension given in  \cite{GO}.
\begin{theorem} Let $s>0$, $1<p<\infty$, and  $1/p=1/p_1+1/p_2=1/p_3+1/p_4,$ with $p_2, p_3<\infty$. Then there exists $c=c_{s,n,p,p_1,p_2,p_3,p_4}>0$ such that for all $f, g\in \mathcal{S}(\R^n)$, it holds that
\begin{equation}\label{kp-comm}
\|[J^s, f]g\|_p\le c \big(\|\nabla f\|_{p_1}\|J^{s-1}g\|_{p_2}+\|J^sf\|_{p_3}\|g\|_{p_4}\big).
\end{equation}
\end{theorem}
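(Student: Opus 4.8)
The inequality \eqref{kp-comm} is the Kato--Ponce commutator estimate in the sharpened range of exponents due to \cite{GO}, and the plan is to establish it by a Bony paraproduct / Littlewood--Paley decomposition, reducing matters to bilinear Coifman--Meyer multiplier bounds (plus a maximal-function substitute at the endpoints). Since $f,g\in\mathcal{S}(\R^n)$, only the a priori estimate is needed. On the Fourier side one has
\[
\big([J^s,f]g\big)^{\widehat{\hskip3pt}}(\zeta)=\int_{\R^n}\big(\langle\zeta\rangle^s-\langle\eta\rangle^s\big)\,\widehat f(\zeta-\eta)\,\widehat g(\eta)\,d\eta,
\]
so the commutator is a bilinear operator with symbol $\sigma(\xi,\eta):=\langle\xi+\eta\rangle^s-\langle\eta\rangle^s$ acting on the pair $(f,g)$, where $\xi$ is the frequency of $f$ and $\eta$ that of $g$. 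A crucial simplification is that $\langle\cdot\rangle^s=(1+|\cdot|^2)^{s/2}$ is $C^\infty$ everywhere, so there is no resonant singularity at $\xi+\eta=0$. I would insert a smooth homogeneous partition of unity $1=\chi_{\mathrm{lo}}(\xi,\eta)+\chi_{\mathrm{hi}}(\xi,\eta)$ subordinate to the cones $\{|\xi|\le\varepsilon|\eta|\}$ and $\{|\xi|\ge(\varepsilon/2)|\eta|\}$, splitting $\sigma=\sigma_{\mathrm{lo}}+\sigma_{\mathrm{hi}}$ into a ``low-frequency $f$'' piece (where the cancellation in $\sigma$ is essential) and a ``high-frequency $f$'' piece (where it is not).

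\emph{Low-frequency $f$ piece.} On $\mathrm{supp}\,\chi_{\mathrm{lo}}$ one has $\langle\zeta\rangle=\langle\xi+\eta\rangle\sim\langle\eta\rangle$, and writing $\sigma_{\mathrm{lo}}(\xi,\eta)=\big(\xi\cdot\int_0^1(\nabla\langle\cdot\rangle^s)(t\xi+\eta)\,dt\big)\chi_{\mathrm{lo}}(\xi,\eta)=:\xi\cdot\tau(\xi,\eta)$, the symbol $\tau$ obeys the Coifman--Meyer estimates $|\partial_\xi^\alpha\partial_\eta^\beta\tau(\xi,\eta)|\lesssim\langle\eta\rangle^{s-1}(|\xi|+|\eta|)^{-|\alpha|-|\beta|}$ on this cone (using $|\nabla\langle\cdot\rangle^s|\lesssim\langle\cdot\rangle^{s-1}$ and the corresponding derivative bounds). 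Hence, after pulling out the factor $\langle\eta\rangle^{s-1}$, this contribution equals a Coifman--Meyer bilinear operator applied to $(\nabla f,\,J^{s-1}g)$, which maps $L^{p_1}\times L^{p_2}\to L^p$ and produces the term $\|\nabla f\|_{p_1}\|J^{s-1}g\|_{p_2}$.

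\emph{High-frequency $f$ piece.} Here no cancellation is needed: decomposing into Littlewood--Paley pieces $f=\sum_j P_jf$, on the block where $|\xi|\sim 2^j$ and $|\eta|\lesssim 2^j$ one has $|\sigma_{\mathrm{hi}}|\lesssim\langle\xi+\eta\rangle^s+\langle\eta\rangle^s\lesssim 2^{sj}$, and after factoring out $2^{sj}$ the remaining symbol is a bounded Coifman--Meyer symbol on that block. Using $2^{sj}\|P_jf\|\sim\|P_jJ^sf\|$, the Littlewood--Paley square-function estimate for the $f$-factor, and the pointwise bound $\sup_j|P_{\le j}g|\lesssim Mg$ for the Hardy--Littlewood maximal function for the $g$-factor, this contribution is bounded by $\|J^sf\|_{p_3}\|g\|_{p_4}$, with $g$ appearing undifferentiated exactly as in \eqref{kp-comm}. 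Summing the two pieces gives the claimed estimate when all exponents are finite.

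The main obstacle — and the actual content of the extension in \cite{GO} over \cite{KaPo} — is the endpoint configurations $p_1=\infty$ or $p_4=\infty$, where the classical Coifman--Meyer theorem (which requires all exponents in $(1,\infty)$) is unavailable. In that case I would not use the multiplier theorem directly but instead the sharp ``maximal'' substitute: in the high-frequency-$f$ piece with $p_4=\infty$ one controls the truncated low-frequency factor of $g$ by $Mg\le\|g\|_\infty$ while keeping the square function on $J^sf$ (as sketched above), and symmetrically, in the low-frequency-$f$ piece with $p_1=\infty$ one dominates $P_{<k}(\nabla f)$ by $M(\nabla f)\le\|\nabla f\|_\infty$ and runs a square-function argument on $J^{s-1}g$. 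This endpoint bookkeeping, rather than the symbol analysis, is the delicate step, and it is precisely the reduced bilinear estimates supplied by \cite{GO}.
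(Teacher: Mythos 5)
Your sketch is correct in outline and follows essentially the same route the paper relies on: the paper gives no independent argument for \eqref{kp-comm}, but simply notes that the case $p=p_2=p_3$ is in \cite{KaPo} and that the general case follows the same commutator-to-bilinear-multiplier reduction using the Coifman--Meyer-type result of \cite{CoMe} in the form stated in \cite{GO} (Theorem A), which is precisely the paraproduct/Coifman--Meyer scheme with maximal-function input at the $L^\infty$ endpoints that you describe. No gap to report.
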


In the case $p=p_2=p_3$, the proof of \eqref{kp-comm} was given in \cite{KaPo}. The general case in \eqref{kp-comm} follows the same argument using the version of the multiplier result of Coifman-Meyer \cite {CoMe} stated in  \cite{GO} (Theorem A).

\noindent 
\begin{lemma}\label{lem1}
 Let $p\in[1,\infty]$ and $s\geq 0$, with $s$ not an odd integer in the case $p=1$. Then 
\begin{equation}\label{comm-go-1}
\|J^{s} (fg)\|_p \leq c (\|f\|_{p_1}\|J^{s}g\|_{p_2} +\|g\|_{q_1}\|J^{s}f\|_{q_2}),
\end{equation}
and
\begin{equation}\label{comm-go-2}
\|D^{s} (fg)\|_p \leq c (\|f\|_{p_1}\|D^{s}g\|_{p_2} +\|g\|_{q_1}\|D^{s}f\|_{q_2})
\end{equation}
with $1/p_1+1/p_2=1/q_1+1/q_2=1/p$.

\end{lemma}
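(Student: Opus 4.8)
\medskip
\noindent\textbf{Proof plan.} The plan is to prove the homogeneous estimate \eqref{comm-go-2} first, since once that is in hand the inhomogeneous bound \eqref{comm-go-1} follows by repeating the argument with $\langle\xi\rangle^s$ in place of $|\xi|^s$ (equivalently, by splitting $J^s=D^s\chi(D)+\bigl(J^s-D^s\chi(D)\bigr)$, the second operator being a low-frequency smoothing that is bounded on every $L^p$, $1\le p\le\infty$, and absorbing the easy low-frequency part of the product). I would fix a Littlewood--Paley partition of unity, write $\Delta_j$ for the dyadic frequency projections and $S_j=\sum_{k\le j}\Delta_k$, and decompose the product by Bony's paraproduct formula
\[
fg=T_fg+T_gf+R(f,g),\qquad T_fg:=\sum_j S_{j-2}f\,\Delta_j g,\quad R(f,g):=\sum_{|j-k|\le1}\Delta_j f\,\Delta_k g.
\]
It then suffices to bound $D^s$ applied to each of the three pieces, distributing the $s$ derivatives onto $g$ for $T_fg$, onto $f$ for $T_gf$, and either way for the resonant term $R(f,g)$.

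First I would handle the low--high paraproduct $T_fg$. Since the $j$-th summand is frequency-supported in an annulus of size $\sim 2^j$, the quantity $\|D^sT_fg\|_p$ is comparable to the $L^p$-norm of the square function $\bigl(\sum_j 2^{2js}|S_{j-2}f\,\Delta_jg|^2\bigr)^{1/2}$; factoring out $\sup_j|S_{j-2}f|\lesssim Mf$ (the Hardy--Littlewood maximal function) and combining the Fefferman--Stein inequality with the Littlewood--Paley characterization $\bigl\|\bigl(\sum_j 2^{2js}|\Delta_jg|^2\bigr)^{1/2}\bigr\|_{p_2}\simeq\|D^sg\|_{p_2}$ and Hölder gives $\|D^sT_fg\|_p\lesssim\|f\|_{p_1}\|D^sg\|_{p_2}$ for $1<p<\infty$. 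Exchanging the roles of $f$ and $g$ yields $\|D^sT_gf\|_p\lesssim\|g\|_{q_1}\|D^sf\|_{q_2}$. The bilinear estimates at the various pairs of Lebesgue exponents would be made rigorous by the Coifman--Meyer multiplier theorem in the form of \cite{GO} (Theorem A), which is already available to us; the endpoints $p=1$ and $p=\infty$ for the paraproduct pieces require separate, more delicate treatment (in $L^1$ one passes to the Hardy space, in $L^\infty$ to $\mathrm{BMO}$), and are carried out in \cite{GO}.

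The term where the real work — and the hypothesis on $s$ — lies is the resonant part $R(f,g)$. Each summand $\Delta_jf\,\Delta_kg$ with $|j-k|\le1$ has frequency support in a \emph{ball} of radius $\sim 2^j$ rather than an annulus, so it is seen by all projections $\Delta_\ell$ with $\ell\le j+O(1)$; the key observation is that for $s>0$ the geometric sum $\sum_{\ell\le j}2^{\ell s}\lesssim 2^{js}$ still allows one to move the $s$ derivatives onto one of the high-frequency factors and close with the same square-function/maximal-function scheme, giving $\|D^sR(f,g)\|_p\lesssim\|f\|_{p_1}\|D^sg\|_{p_2}+\|g\|_{q_1}\|D^sf\|_{q_2}$ for $1\le p<\infty$ provided $s$ is not an odd integer when $p=1$. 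The hard part will be precisely that $L^1$ endpoint: there the crude summation destroys the cancellation that is needed to keep the resonant output in $H^1$, and one must instead run a molecular (atomic) argument in which the smoothness of the convolution kernel of $D^s$ is exploited to absorb the local singularity; that kernel degenerates exactly when $s$ is an odd integer, which is what forces the exclusion. This is the subtle part of \cite{GO}, and I would invoke it rather than reproduce it. The case $p=\infty$ for $R(f,g)$ is again routine. Assembling the three bounds gives \eqref{comm-go-2}, and hence \eqref{comm-go-1}.

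I expect the single genuine obstacle to be the resonant/high--high interaction at the $L^1$ endpoint, together with the sharpness of the odd-integer restriction; every other contribution reduces to the standard paraproduct calculus plus the Coifman--Meyer theorem.
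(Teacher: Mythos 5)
The paper does not actually prove this lemma: it is stated as a known fractional Leibniz rule, with the range $1<p<\infty$ attributed to \cite{KaPo}, \cite{KPV93}, \cite{GO}, the case $p=p_1=p_2=q_1=q_2=\infty$ to \cite{BoLi} (see also \cite{GMN}), and the case $p=p_2=q_2=1$ to \cite{OW}. Your proposal instead reconstructs the standard proof: Bony paraproduct decomposition, square-function and Fefferman--Stein/maximal-function estimates for the two paraproduct pieces, geometric summation for the resonant piece when $s>0$, and Coifman--Meyer bilinear multiplier bounds to cover the various H\"older pairings. For $1<p<\infty$ this outline is correct and is essentially the argument of the cited literature, so in that range your route is a legitimate (more explicit) alternative to the paper's bare citation. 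The one point to fix is where you send the endpoints: you claim the $p=1$ and $p=\infty$ cases ``are carried out in \cite{GO}'', but Grafakos--Oh require all the paired exponents to be strictly greater than $1$ and $p<\infty$, so the two genuinely hard cases of the lemma --- $p=\infty$ with all exponents infinite, and $p=1$ with the rough factor in $L^1$ and the other in $L^\infty$, where the ``$s$ not an odd integer'' restriction originates --- are not in that paper; they are precisely the results of \cite{BoLi}, \cite{GMN} and \cite{OW} that the text quotes. Since those endpoints (and the odd-integer exclusion) are the only content of the lemma beyond the classical Kato--Ponce range, the deferral must point to the correct sources; your heuristic for the $L^1$ failure at odd $s$ (degeneracy of the kernel of $D^s$, i.e.\ the appearance of the Hilbert transform) is in the right spirit but is not a substitute for invoking \cite{OW}. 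With the citations redirected, the proposal is sound; as written, the part you lean on \cite{GO} for is exactly the part \cite{GO} does not prove.
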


Lemma \ref{lem1} with $p\in(1,\infty)$ we refer to  \cite{KaPo}, \cite{KPV93} and \cite{GO}. 
The case $r=p_1=p_2=q_1=q_2=\infty$ was established in \cite{BoLi}, see also \cite{GMN}. 
The case $p=p_2=q_2=1$ was settled in \cite{OW}.

\medskip

The next estimate is an inequality of Gagliardo-Nirenberg type whose proof can be found in  \cite{NP}.

\begin{lemma}\label{GNSinequality} There exists $c>0$, such that for any $f\in H^{1/2}(\R)$ 
 \begin{equation}
 \|f\|_{L^3} \leq c\,\|f\|_{L^2}^{\frac{2}{3}}\|D^{1/2}f\|_{L^2}^{\frac{1}{3}}. 
 \end{equation}
\end{lemma}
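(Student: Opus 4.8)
The plan is to factor the inequality through the homogeneous Sobolev embedding $\dot H^{1/6}(\R)\hookrightarrow L^3(\R)$ together with a one-line Fourier-side interpolation of homogeneous norms. First I would establish
\[
\|f\|_{L^3}\le c\,\|D^{1/6}f\|_{L^2},
\]
which is the endpoint relation $\tfrac13=\tfrac12-\tfrac16$ in dimension one. This follows by writing $f=D^{-1/6}(D^{1/6}f)$ and recognizing $D^{-1/6}$ as a Riesz potential, i.e. convolution against a constant multiple of $|x|^{-5/6}$; the Hardy--Littlewood--Sobolev inequality then gives the bound $L^2\to L^3$, since $1+\tfrac13=\tfrac12+\tfrac56$. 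For $f\in H^{1/2}(\R)$ one already has $f\in L^3$ by the inhomogeneous one-dimensional Sobolev embedding, so there is no integrability issue to resolve; the content of this step is purely the homogeneous estimate.

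Next I would prove the interpolation bound
\[
\|D^{1/6}f\|_{L^2}\le \|f\|_{L^2}^{2/3}\,\|D^{1/2}f\|_{L^2}^{1/3}.
\]
By Plancherel this is a statement about $\int_\R |\xi|^{1/3}|\widehat f(\xi)|^2\,d\xi$. Writing the integrand as $(|\widehat f|^2)^{2/3}\cdot(|\xi|\,|\widehat f|^2)^{1/3}$ and applying H\"older's inequality with exponents $3/2$ and $3$ yields
\[
\int_\R |\xi|^{1/3}|\widehat f|^2\,d\xi \le \Big(\int_\R |\widehat f|^2\,d\xi\Big)^{2/3}\Big(\int_\R |\xi|\,|\widehat f|^2\,d\xi\Big)^{1/3},
\]
which is exactly the asserted inequality after taking square roots and invoking Plancherel once more. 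Combining the two displays gives $\|f\|_{L^3}\le c\,\|f\|_{L^2}^{2/3}\|D^{1/2}f\|_{L^2}^{1/3}$, as desired.

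The only genuine obstacle is the endpoint Sobolev embedding $\dot H^{1/6}(\R)\hookrightarrow L^3(\R)$: it is not covered by the elementary Sobolev inequality and really relies on Hardy--Littlewood--Sobolev (equivalently, on the sharp mapping properties of fractional integration on $\R$). Everything else --- the interpolation step, the reduction, and the a priori $L^3$-membership of $H^{1/2}$ data --- is routine. An entirely self-contained alternative avoiding HLS is a Littlewood--Paley argument: decompose $f=\sum_j P_jf$, apply Bernstein's inequality $\|P_jf\|_{L^3}\lesssim 2^{j/6}\|P_jf\|_{L^2}$, bound $\|P_jf\|_{L^2}$ by $\min\{\|f\|_{L^2},\,2^{-j/2}\|D^{1/2}f\|_{L^2}\}$, and optimize the split point of the resulting dyadic sum; this reproduces precisely the exponents $2/3$ and $1/3$.
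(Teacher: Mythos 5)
Your proof is correct: the Plancherel--H\"older step giving $\|D^{1/6}f\|_{L^2}\le \|f\|_{L^2}^{2/3}\|D^{1/2}f\|_{L^2}^{1/3}$ is exact, and the embedding $\dot H^{1/6}(\R)\hookrightarrow L^3(\R)$ is a legitimate (non-critical, since $1/6<1/2$) instance of fractional-integration/HLS with the exponent bookkeeping done correctly; the Littlewood--Paley/Bernstein alternative you sketch also closes with the right split point. The only comparison to make is that the paper does not prove this lemma at all: it simply cites the reference \cite{NP} (Nahas--Ponce), so your argument is by construction a different, self-contained route. It is the standard proof of this one-dimensional Gagliardo--Nirenberg estimate, and it buys independence from the cited source at essentially no cost; the identity $f=D^{-1/6}(D^{1/6}f)$ that you use is justified for $f\in H^{1/2}(\R)$ by checking it on the Fourier side (the symbols cancel a.e., and both sides lie in $L^2$), which is the only small point worth saying explicitly if you wrote this up in full.
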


\vspace{0.5cm}

%%%%%%%%%%%%%%%%%%%%%%%%%%%%%%%%%%%%%%%%%%%%%
\section{Decay}
%%%%%%%%%%%%%%%%%%%%%%%%%%%%%%%%%%%%%%%%%%%%%

In this section we show persistence properties of the solution flow associated to the ILW equation in weighted Sobolev spaces.

\begin{proof}[Proof of Theorem \ref{decay}]
Formally multiply the equation \eqref{ILW} by $\japa^{2\alpha}u$ and integrate the result in the $x$-variable to get
\begin{equation}\label{decay-star}
\frac12\frac{d}{dt}\int u^2(x,t)\japa^{2\alpha}\,dx+\underbrace{\int{\lop}\partial_xu\,u \japa^{2\alpha}\,dx}_{ A_1(t)}+
\underbrace{\int u\partial_xu \,u \japa^{2\alpha}\,dx}_{ A_2(t)}=0.
\end{equation}

Notice
\begin{equation}\label{decay-2}
|A_2(t)|\le \left|-\frac{2\alpha}{3}\int u^3 \japa^{2\alpha-1}\frac{x}{\japa}\,dx\right| \le c\|\japa^{\alpha}u\|_2\|\japa^{\alpha-1}u^2\|_2.
\end{equation}
Thus, if $\alpha\le1$, Sobolev embedding yields
\begin{equation*}
\|\japa^{\alpha-1}u^2\|_2\le \|u\|_4^2\le c\|u\|_{\frac12,2}^2.
\end{equation*}
Thus, if $\alpha>1$ ($s>1$), using Sobolev embedding it follows that
\begin{equation*}
\|\japa^{\alpha-1}u^2\|_2\le \|\japa^{\alpha-1}u\|_2\|u\|_{\infty}\le c\|\japa^{\alpha}u\|_2\|u\|_{1,2}.
\end{equation*}
inserting this in \eqref{decay-2} we get a bound for $|A_2(t)|$.

Next we estimate the term $A_1(t)$.

\begin{equation}\label{decay-3}
\begin{split}
A_1(t)&= -\int u \lop\partial_x(u\japa^{2\alpha})\,dx\\
&= -\int u\lop\partial_xu \japa^{2\alpha}\,dx -\int u\,[\lop\partial_x; \japa^{2\alpha}]u\,dx.
\end{split}
\end{equation}

Hence
\begin{equation*}
2A_1(t)= -\int u\,[\lop\partial_x; \japa^{2\alpha}]u\,dx.
\end{equation*}

Now,
\begin{equation*}
\begin{split}
[\lop\partial_x; \japa^{2\alpha}]&= c\japa^{2\alpha-1}\frac{x}{\japa}\Omega'(\partial_x)
+\phi_{2\alpha-2}(x)\Omega''(\partial_x)+\dots+\mathcal{R}\\
&=  c\japa^{2\alpha-1}\frac{x}{\japa}\Omega'(\partial_x)+B_1.
\end{split}
\end{equation*}
where 
\begin{equation*}
|\phi_{2\alpha-2}(x)|\le c\japa^{\alpha-1}\hskip10pt \text{and} \hskip10pt \left|\int B_1uu\,dx\right|\le c\|\japa^{\alpha-1}u\|_2^2.
\end{equation*}

Above we have used the commutator expansion (Lemma \ref{lem3}), Lemma \ref{pseudo+weights} with the fact that 
$\Omega^{(j)}(\partial_x)\in S^{0}_{1,0}$ if $j\ge 2$. 

Thus we just need to handle
\begin{equation*}
\begin{split}
&\int \japa^{2\alpha-1}\frac{x}{\japa}\Omega'(\partial_x)u \,u dx=\int \japa^{2\alpha-1}\frac{x}{\japa}\pop\pop u\,u dx\\
&=-\int \left[\pop;\japa^{2\alpha-1}\frac{x}{\japa}\right]\pop u\,u dx
+\int \pop\left(\japa^{2\alpha-1}\frac{x}{\japa}\pop u\right)\, u dx\\
&=-\int \left[\pop;\japa^{2\alpha-1}\frac{x}{\japa}\right]\pop u\,u dx- \int \japa^{2\alpha-1}\frac{x}{\japa}\pop u\,\pop u dx\\
&= A_{11}(t)+A_{12}(t).
\end{split}
\end{equation*}

Using Lemma \ref{pseudo+weights}, Lemma \ref{lem2-interpolation}, and Young's inequality we have the following
chain of inequalities
\begin{equation*}
\begin{split}
|A_{12}(t)|&\le \|\japa^{\alpha-\frac12} \pop u\|_2^2\le \|\japa^{\alpha-\frac12}J^{\frac12} u\|_2^2\\
&\le c(\|\japa^{\alpha}u\|_2^{1-\frac{1}{2\alpha}}\|J^{\alpha}u\|_2^{\frac{1}{2\alpha}})^2\\
&\le c\|\japa^{\alpha}u\|_2^2+\|u\|_{s,2}^2.
\end{split}
\end{equation*}

To control $A_{11}$  we write by using the commutator expansion (Lemma \ref{lem3})
\begin{equation*}
\begin{split}
\big[\pop;&\japa^{2\alpha-1}\frac{x}{\japa}\big]\pop \\
&= \phi_{2\alpha-2}(x)p'(\partial_x)\pop+ 
\phi_{2\alpha-3}(x)p''(\partial_x)\pop+\dots +\mathcal{R}\\
&=\phi_{2\alpha-2}(x)p'(\partial_x)\pop +B_2,
\end{split}
\end{equation*}
where 
\begin{equation*}
\Big|\int B_2 u u\, dx\Big| \le c\|\japa^{\alpha-1}u\|_2^2.
\end{equation*}

Finally, since $p'(\partial_x)\pop\in S^0_{1,0}$ and $|\phi_{\beta}(x)|\le c\japa^{\beta}$, we get
\begin{equation*}
\begin{split}
\Big|\int \phi_{2\alpha-2}(x)p'(\partial_x)\pop u u dx\Big| &\le \|\phi_{1\alpha-1}(x)p'(\partial_x)\pop u\|_2
\|\phi_{\alpha-1}(x)u\|_2\\
&\le c\|\japa^{\alpha-1}(x)u\|_2^2.
\end{split}
\end{equation*}

Collecting the information above in \eqref{decay-star} and using Gronwall's inequality we obtain the
desired result.

\begin{remark} To justify this formal argument we can regularize the data (and use the continuous dependence
on the solution upon the data) and truncate the weights $\japa^{2\alpha}$ as
\begin{equation*}
\varphi_{\alpha,N}(x)=
\begin{cases}
\japa^{2\alpha}, \hskip25pt |x|\le N,\\
\langle 2N\rangle^{2\alpha}, \hskip15pt |x|\ge 3N,
\end{cases}
\end{equation*}
with $\varphi_{\alpha,N}$ smooth, even, nondecreasing for $x\ge 0$, and check that 
all the estimates above are independent of $N$ and the regularization on the data.
This allows us to pass to the limit to get the result.

\end{remark}

\section{Proof of Theorem \ref{main-weights-small}}

For $\beta>0$ define $\phi_{\beta}\in C^{\infty}(\R)$ as $\phi_{\beta}(x)=x\japa^{\beta-1}$, where $\japa=(1+x^2)^{\frac12}$.
Notice that
\begin{enumerate}
\item $|\phi_{\beta}(x)|\le \japa^{\beta}$.
\item There exist constants $c_{\beta}, \tilde{c}_{\beta}>0$ such that
\begin{equation}\label{weights-1-s}
c_{\beta} \japa^{\beta-1}\le \phi'_{\beta}(x)\le \tilde{c}_{\beta} \japa^{\beta-1}.
\end{equation}
\end{enumerate}

From the equation \eqref{ILW} one formally gets the identity
\begin{equation}\label{weights-2-s}
\begin{split}
\frac12\frac{d}{dt}& \int u^2(x,t)\phi_{2\alpha}(x)dx+\int \lop\partial_xu\, u\phi_{2\alpha}dx+\int u\partial_xu\,u\phi_{2\alpha}\,dx\\
&=A_1(t)+A_2(t)+A_3(t)=0,
\end{split}
\end{equation}
with $\alpha\in[1/6,1/2]$. After integrating in the time interval $[t_1, t_2]$ it follows that
\begin{equation}\label{weights-3-s}
\int_{t_1}^{t_2} A_1(t)\,dt \lesssim \,\| |x|^{\alpha}u(\cdot,t_2)\|_2^2+\||x|^{\alpha}u(\cdot, t_1)\|_2^2 <\infty
\end{equation}
by hypothesis \eqref{decay-hyp1}.
Integration by parts  leads to
\begin{equation}\label{weights-4-s}
 A_3(t)=-\frac13\int u^3(x,t) \phi_{2\alpha}' (x)\,dx.
 \end{equation}
 Thus, from the assumption  on $\alpha$, one sees that
 \begin{equation}\label{weights-5-s}
 \begin{split}
 |A_3(t)|&\le c \int |u(x,t)|^3\japa^{2\alpha-1} \,dx\\
 &\le c\|u(\cdot,t)\|_3^3  \le c\|u(\cdot,t)\|_{\frac16,2}^3\in L^{\infty}([-T,T])
 \end{split}
 \end{equation}
 by hypothesis. Therefore, after integration in the time interval $[t_1, t_2]$ it follows that
 \begin{equation}\label{weights-6-s} 
 \int_{t_1}^{t_2} |A_3(t)|\,dt <\infty.
\end{equation}
Finally, we consider $A_2$ in \eqref{weights-2-s}. Using that $\lop\partial_x$ is skew symmetric
 \begin{equation}\label{weights-7-s}
 \begin{split}
A_2(t)&= \int \lop\partial_xu\,u\phi_{2\alpha}dx=-\int u\lop\partial_x(u\phi_{2\alpha})dx\\
&=-\int u\phi_{2\alpha}\lop\partial_xudx-\int u\,[\lop\partial_x; \phi_{2\alpha}]udx.
\end{split}
\end{equation}
Thus 
\begin{equation}\label{weights-8-s}
2A_2(t)= -\int u\,[\lop\partial_x; \phi_{2\alpha}]u.
\end{equation}
Using formula \eqref{lem3-comm} one can see that
\begin{equation}\label{weights-9-s}
\begin{split}
[\lop\partial_x; \phi_{2\alpha}]&= c_1 \phi_{2\alpha}'(x)\Omega'(\partial_x)+c_2 \phi_{2\alpha}''(x)\Omega''(\partial_x)+ \mathcal{R}_3\\
&=A_{21}+A_{22}+A_{23}.
\end{split}
\end{equation}
Since $\alpha\in [1/6,1/2]$ and $\Omega''(\partial_x)$  is an operator of order zero, from our commutator estimates in section 2, one sees that 
$\phi_{2\alpha}''(x)\Omega''(\partial_x)$ and $ \mathcal{R}_3$ are bounded in $L^2$. This takes care of the estimate for $A_{22}$ and $A_{23}$ in 
\eqref{weights-9-s}. So it remains to consider $ A_{21}$. Thus,
\begin{equation}\label{weights-11-s}
\begin{split}
A_{21}&(t)=-c_1\int \phi_{2\alpha}'(x)\Omega'(\partial_x)u\,u dx=-c_1\int \phi_{2\alpha}'(x)(\pop)^2u\,u dx\\
&=-c_1\int \pop(\phi_{2\alpha}'(x)\pop u)\,u dx+c_1\int [\pop; \phi_{2\alpha}'] \pop u\,u dx\\
&= A_{211}(t)+A_{212}(t).
\end{split}
\end{equation}
The commutator \eqref{lem3-comm} leads to 
\begin{equation*}
[\pop; \phi_{2\alpha}']\pop u\, u= \phi_{2\alpha}''p'(\partial_x)\pop u \, u +\mathcal{R}_2u u.
\end{equation*}
Since $p'(\partial_x)\pop$ is an operator of order zero, from the assumption on $\alpha$, we deduce that
\begin{equation*}
\big|\int  \phi_{2\alpha}''p'(\partial_x)\pop u u dx\big| \le \|u\|_2^2 \hskip10pt \text{and}\hskip5pt |\int \mathcal{R}_2u u dx|\le c\|u\|_2^2. 
\end{equation*}
Thus
\begin{equation*}
|A_{212}(t)|\lesssim \|\japa^{\frac32}u\|^2_2.
\end{equation*}

Finally, since $\pop$ is odd
\begin{equation*}
A_{211}(t)=c_1\int \phi_{2\alpha}'(x)\pop u)\,\pop u \simeq \int |\japa^{\alpha-\frac12}\pop u|^2\,dx.
\end{equation*}

Hence, collecting the above information and integrating in the time interval $[t_1, t_2]$ one has that
\begin{equation}\label{weights-20-s}
\int_{t_1}^{t_2}\int |\japa^{\alpha-\frac12} \pop u(x,t)|^2\,dxdt <\infty.
\end{equation}

Reapplying the previous argument with $\japa^{2\alpha}$ instead of $\phi_{2\alpha}$ one gets that
\begin{equation}\label{weights-21-s}
\japa^{\alpha}u(t)\in L^2(\R)\;\;\;\;\;\;\;t\in[0,T].
\end{equation}

Taking $\mu\in C^{\infty}_0(\R)$, with $\mu(x)=1,\;|x|\leq 1$ and $\mu(x)=0,\;|x|\geq 2$ we write
\begin{equation}
\label{weights-22-s}
\japa^{\alpha-1/2}J^{1/2}u=\japa^{\alpha-1/2}J^{1/2}(\mu(\partial_x)+(1-\mu(\partial_x))u.
\end{equation}

Since 
$$
J^{1/2}\mu(\partial_x),\;\;\;\;\;\;\;J^{1/2}(1-\mu(\partial_x))/p(\partial_x),
$$
 are operators of order zero, from Lemma \ref{pseudo+weights}, one has that $\japa^{\alpha-1/2}J^{1/2}u(t) \in L^2(\R)$ at each time $t$ where $\japa^{\alpha-\frac12} \pop u(t)\in L^2(\R)$. Therefore, from \eqref{weights-20-s} one gets that $\japa^{\alpha-1/2}J^{1/2}u(t) \in L^2(\R)$ a.e. in $[0,T]$. 
 
 This combined with \eqref{weights-21-s}, and the interpolation 
result in Lemma \ref{lem2b-interpolation}, tells us that for some $t_0\in[0,T]$ one has that $J^{\alpha}u(t_0)\in L^2(\R)$ which yields the desired result.

\end{proof}

%%%%%%%%%%%%%%%%%%%%%%%%%%%%%%%%%%%%%%%%%%%%%%%%%%
\section{Proof of Theorem \ref{main-weights}}
%%%%%%%%%%%%%%%%%%%%%%%%%%%%%%%%%%%%%%%%%%%%%%%%%%

\subsection*{Case $\frac32<s<\alpha<2$} \hskip15pt

\vspace{3mm}

As in the proof of the previous theorem we define $\phi_{\beta}\in C^{\infty}(\R)$ as $\phi_{\beta}(x)=x\japa^{\beta-1}$, where $\japa=(1+x^2)^{\frac12}$.
Notice that
\begin{enumerate}
\item $|\phi_{\beta}(x)|\le \japa^{\beta}$.
\item There exist constants $c_{\beta}, \tilde{c}_{\beta}>0$ such that
\begin{equation}\label{weights-1}
c_{\beta} \japa^{\beta-1}\le \phi'_{\beta}(x)\le \tilde{c}_{\beta} \japa^{\beta-1}.
\end{equation}
\end{enumerate}

From the equation \eqref{ILW} we formally get the identity
\begin{equation}\label{weights-2}
\begin{split}
\frac12\frac{d}{dt}& \int u^2(x,t)\phi_{2\alpha}(x)\,dx+\int \lop\partial_xu\, u\phi_{2\alpha}\,dx+\int u\partial_xu\,u\phi_{2\alpha}\,dx\\
&=A_1(t)+A_2(t)+A_3(t)=0.
\end{split}
\end{equation}

After integrating in the time interval $[t_1, t_2]$ one has
\begin{equation}\label{weights-3}
\int_{t_1}^{t_2} A_1(t)\,dt \lesssim \,\| |x|^{\alpha}u(\cdot,t_2)\|_2^2+\||x|^{\alpha}u(\cdot, t_1)\|_2^2 <\infty
\end{equation}
by hypothesis.

Integration by parts yields
\begin{equation}\label{weights-4}
 A_3(t)=-\frac13\int u^3(x,t) \phi_{2\alpha}' (x)\,dx.
 \end{equation}
 Thus
 \begin{equation}\label{weights-5}
 \begin{split}
 |A_3(t)|&\le c \int |u(x,t)|^3\japa^{2\alpha-1} \,dx\\
 &\le c\|u\|_{\infty}\|\japa^{\frac32}u(\cdot,t)\|_2^2 \in L^{\infty}([-T,T]).
 \end{split}
 \end{equation}
 Therefore, after integration in the time interval $[t_1, t_2]$ it follows that
 \begin{equation}\label{weights-6} 
 \int_{t_1}^{t_2} |A_3(t)|\,dt <\infty.
\end{equation}

Finally, we consider $A_2$ in \eqref{weights-2}. Using that $\lop\partial_x$ is skew symmetric
 \begin{equation}\label{weights-7}
 \begin{split}
A_2(t)&= \int \lop\partial_xu\,u\phi_{2\alpha}dx=-\int u\lop\partial_x(u\phi_{2\alpha})dx\\
&=-\int u\phi_{2\alpha}\lop\partial_xudx-\int u\,[\lop\partial_x; \phi_{2\alpha}]udx.
\end{split}
\end{equation}
Thus 
\begin{equation}\label{weights-8}
2A_2(t)= -\int u\,[\lop\partial_x; \phi_{2\alpha}]u dx.
\end{equation}
Using formula \eqref{lem3-comm} one can see that
\begin{equation}\label{weights-9}
\begin{split}
[\lop\partial_x; \phi_{2\alpha}]&= c_1 \phi_{2\alpha}'(x)\Omega'(\partial_x)+c_2 \phi_{2\alpha}''(x)\Omega''(\partial_x)\\
&\hskip10pt+c_3 \phi_{2\alpha}^{(3)}(x)\Omega^{(3)}(\partial_x)+ c_4 \phi_{2\alpha}^{(4)}(x)\Omega^{(4)}(\partial_x)+ \mathcal{R}_5\\
&=A_{21}+A_{22}+A_{23}+A_{24}+A_{25}.
\end{split}
\end{equation}

Since the operators $\Omega'(\partial_x), \dots, \Omega^{(4)}(\partial_x)$ are of order zero by  the hypothesis on $\alpha$ ($\alpha<2$), 
Remark \ref{rem-main} (7), and the hypothesis
\begin{equation}\label{weights-10}
\begin{split}
\Big|\int \phi_{2\alpha}''(x)\Omega''(\partial_x)u\,u dx\Big|&\le \int |\japa^{\alpha-1}\Omega''(\partial_x)u||\japa^{\alpha-1}u dx|\\
&\le \|\japa^{\alpha-1}u\|_2^2\le \|\japa^{\frac32}u\|_2^2  \in L^{\infty}([-T,T]).
\end{split}
\end{equation}
Since $\phi_{2\alpha}^{(5)}\in L^2(\R)$, from \eqref{lem3-comm-1} $\|A_{25}u\|_2\le c\|u\|_2$ and the terms 
$A_{23}, A_{24}$ can be handled as in \eqref{weights-10} then it remains  to consider $A_{21}$.

\begin{equation}\label{weights-11}
\begin{split}
A_{21}&(t)=-c_1\int \phi_{2\alpha}'(x)\Omega'(\partial_x)u\,u dx=-c_1\int \phi_{2\alpha}'(x)\pop^2u\,u dx\\
&=-c_1\int \pop(\phi_{2\alpha}'(x)\pop u)\,u dx+c_1\int [\pop; \phi_{2\alpha}'] \pop u\, u dx\\
&= A_{211}(t)+A_{212}(t).
\end{split}
\end{equation}

The commutator \eqref{lem3-comm} leads to 
\begin{equation*}
[\pop; \phi_{2\alpha}']\pop u\, u= \phi_{2\alpha}''p'(\partial_x)\pop u \, u +\mathcal{R}_2u u.
\end{equation*}
Since $p'(\partial_x)\pop$ is an operator of order zero we deduce that
\begin{equation*}
\big|\int  \phi_{2\alpha}''p'(\partial_x)\pop u u dx\big| \le \|\japa^{\alpha-1}u\|_2^2 \hskip10pt \text{and}\hskip5pt |\int \mathcal{R}_2u u dx|\le c\|u\|_2^2. 
\end{equation*}
Thus
\begin{equation*}
|A_{212}(t)|\lesssim \|\japa^{\frac32}u\|^2_2.
\end{equation*}

Finally, since $\pop$ is odd
\begin{equation*}
A_{211}(t)=c_1\int \phi_{2\alpha}'(x)\pop u)\,\pop u dx \simeq \int |\japa^{\alpha-\frac12}\pop u|^2\,dx.
\end{equation*}

Hence, collecting the above information and integrating in the time interval $[t_1, t_2]$ one has that
\begin{equation}\label{weights-20}
\int_{t_1}^{t_2}\int |\japa^{\alpha-\frac12} \pop u(x,t)|^2\,dxdt <\infty.
\end{equation}

\medskip

Reapplying the argument above with $\japa^{2\alpha}$ instead of $\phi_{2\alpha}$ and using \eqref{weights-20}
it follows that

\begin{equation}\label{weights-21}
\japa^{\alpha-\frac12}\pop u \in L^{\infty}([t_1, t_2] : L^2(\R)).
\end{equation}

In particular, there exist $\hat{t}_1, \hat{t}_2$,  $\hat{t}_1\neq \hat{t}_2$,  $\hat{t}_1, \hat{t}_2\in (t_1, t_2)$ such that
\begin{equation}\label{weights-22}
\japa^{\alpha-\frac12}\pop u(x, \hat{t}_j)  \in L^2(\R) \hskip5pt j=1,2.
\end{equation}

Working in the time interval $(\hat{t}_1, \hat{t}_2)$ and using the equation \eqref{ILW}  one gets the identity
\begin{equation}\label{weights-23}
\begin{split}
\frac12\frac{d}{dt}& \int (\pop u)^2\phi_{2\alpha-1}(x)\,dx+\int \lop\partial_x\pop u\, \pop u\,\phi_{2\alpha-1}\,dx\\
&+\int \pop(u\partial_xu)\,\pop u\,\phi_{2\alpha-1}\,dx =B_1(t)+B_2(t)+B_3(t)=0.
\end{split}
\end{equation}

As before by hypothesis \eqref{weights-22} one has that
\begin{equation}\label{weights-24}
\int_{\hat{t}_1}^{\hat{t}_2}\!\! \! B_1(t)\,dt \lesssim \|\japa^{\alpha-\frac12}\pop u(\cdot,\hat{t}_1)\|_2^2+  \|\japa^{\alpha-\frac12}\pop u(\cdot,\hat{t}_2)\|_2^2 <\infty.
\end{equation}

Next, set $\pop u=v$, since $\lop\partial_x$ is skew symmetric 
\begin{equation*}
\begin{split}
B_2(t)&=\int \lop\partial_xv\, v\, \phi_{2\alpha-1}\,dx= -\int v \lop\partial_x (v\phi_{2\alpha-1})\,dx\\
&=-\int v\lop \partial_x v \,\phi_{2\alpha-1}dx-\int v\,[\lop\partial_x; \phi_{2\alpha-1}]vdx.
\end{split}
\end{equation*}

Then
\begin{equation*}
2B_2(t)= -\int v\,[\lop\partial_x; \phi_{2\alpha-1}]v dx
\end{equation*}
but from formula \eqref{lem3-comm} 
\begin{equation}\label{weights-22b}
\begin{split}
[\lop\partial_x; \phi_{2\alpha-1}]&= c_1 \phi_{2\alpha-1}'(x)\Omega'(\partial_x)+c_2 \phi_{2\alpha-1}''(x)\Omega''(\partial_x)\\
&\hskip10pt+c_3 \phi_{2\alpha-1}^{(3)}(x)\Omega^{(3)}(\partial_x)+ \mathcal{R}_4 \\
&=B_{21}+B_{22}+B_{23}+B_{24}.
\end{split}
\end{equation}

One has that
\begin{equation*}
\begin{split}
\|\phi_{2\alpha-1}''\Omega''(\partial_x)v\|_2 &\simeq \|\japa^{2\alpha-3} \Omega''(\partial_x)v\|_2\\
&\le c\|\japa^{2\alpha-3}\,v\|_2\\
&\le c\|\japa^{\alpha-\frac12}v\|_2 \simeq \| \japa^{\alpha-\frac12}\pop u\|_2 <\infty.
\end{split}
\end{equation*}
(see \eqref{weights-21}) since $2\alpha-3\le \alpha-\frac12$  ($\iff \alpha<2 \le 3-\frac12$).

Since $\phi_{2\alpha-1}^{(4)}\sim \japa^{2\alpha-5}\in L^2(\R)$,
\begin{equation*}
\|\mathcal{R}_4 v\|_2\le \|v\|_2 \le \|\pop u\|_2 \le \|u\|_{1,2}.
\end{equation*}
The same argument applies to $B_{23}$. So it remains to analise $B_{21}$.

\begin{equation}\label{weights-23b}
\begin{split}
&B_{21}(t)= -c \int \phi_{2\alpha-1}'\pop \pop v \, v dx\\
&=-c_1\int \pop (\phi_{2\alpha-1}'\pop v)v dx - c_1 \int[\pop; \phi_{2\alpha-1}']\pop v \, v dx\\
&= B_{211}+B_{212}.
\end{split}
\end{equation}

Now, by \eqref{lem3-comm} we get
\begin{equation*}
[\pop; \phi_{2\alpha-1}']\pop = \phi_{2\alpha-1}'' p'(\partial_x) \pop + \phi_{2\alpha-1}^{(3)}p''(\partial_x)\pop+\dots
+\mathcal{R}_m\,\pop.
\end{equation*}

We will only consider the first term on the right side of the identity above since the other ones are simpler or similar.
Using that  $p'(\partial_x) \pop$ is an operator of order zero we get
\begin{equation*}
\begin{split}
|\int \phi_{2\alpha-1}'' p'(\partial_x) \pop v \, v  dx|&\simeq |\int \japa^{2\alpha-3}p'(\partial_x) \pop v \, v dx|\\
&\simeq |\int \japa^{\alpha-\frac32}p'(\partial_x) \pop v \japa^{\alpha-\frac32}v dx|\\
&\le \|\japa^{\alpha-\frac32}v\|_2^2\le  \|\japa^{\alpha-\frac32}\pop u\|_2^2\\
& \le \|\japa^{\alpha-\frac12}\pop u\|_2^2<\infty.
\end{split}
\end{equation*}

So it remains to bound $B_{21}$ in \eqref{weights-23b}. Because $\pop$ is odd we deduce that
\begin{equation*}
B_{211}=c_1\int \phi_{2\alpha-1}'(x) \pop v \pop v\,dx\equiv c_1\int |\japa^{\alpha-1}p^2(\partial_x) u|^2\,dx.
\end{equation*}

Next we shall estimate the nonlinear contribution.
\begin{equation}\label{weights-24b}
\begin{split}
&B_3(t)=\int \pop(u\partial_x u)\pop u \phi_{2\alpha-1} dx\\
&= -\!\int[\pop; \phi_{2\alpha-1}(x)]\partial_x\frac{(u^2)}{2} \,\pop u dx+\!\!\int\phi_{2\alpha-1}(x)u\partial_xu \,p^2(\partial_x)u dx\\
&=B_{31}+B_{32}.
\end{split}
\end{equation}

Consider first $B_{31}$. By using \eqref{lem3-comm} we get
\begin{equation*}
\begin{split}
[\pop; \phi_{2\alpha-1}(x)]&= c_1\phi_{2\alpha-1}'(x)p'(\partial_x)+c_2\phi_{2\alpha-1}''(x)p''(\partial_x)\\
&\hskip10pt+c_3\phi_{2\alpha-1}^{(3)}(x)p^{(3)}(\partial_x)+\mathcal{R}_4.
\end{split}
\end{equation*}
where $\|\mathcal{R}_4u\|_2\le \|u\|_2$ since $\phi_{2\alpha-1}^{(4)}(x)\in L^2(\R)$.

Now notice that
\begin{equation*}
\japa^{\alpha-\frac32}p'(\partial_x)\partial_x f= p'(\partial_x)\partial_x (\japa^{\alpha-\frac32}f)-[\japa^{\alpha-\frac32}, p'(\partial_x)\partial_x]f.
\end{equation*}
Then
\begin{equation*}
[ \japa^{\alpha-\frac32}; p'(\partial_x)\partial_x]f= -\{(\japa^{\alpha-\frac32})' (p'(\partial_x)\partial_x)'+ \cdots+\mathcal{R}_k\}.
\end{equation*}

By a familiar argument
\begin{equation*}
\|[\japa^{\alpha-\frac32}, p'(\partial_x)\partial_x] u^2\|_2^2\le c\|u^2\|^2_2\le c \|u\|_{\infty}^2\|u\|_2^2.
\end{equation*}

Finally, to conclude $B_{31}$
\begin{equation*}
\begin{split}
\|p'(\partial_x)\partial_x(\japa^{\alpha-\frac32}u^2)\|&\le c\|\japa^{\alpha-\frac32}u^2\|_{1,2}\\
&\le c\|\japa^{\alpha-\frac32}u^2\|_{2}+ c \|\partial_x(\japa^{\alpha-\frac32}u^2)\|_{2}\\
&\lesssim \|u\|_{\infty}\|\japa^{\alpha}u\|_{2}+\|\japa^{\alpha-\frac32}u\partial_x u\|_{2}\\
&\lesssim (\|u\|_{\infty}+\|\partial_xu\|_{\infty})\|\japa^{\alpha}u\|_{2}.
\end{split}
\end{equation*}

Now, we turn to $B_{32}$ in \eqref{weights-24b}.
\begin{equation*}
\begin{split}
|B_{32}(t)|&=\big|\int \phi_{2\alpha-1}(x) u\partial_xu\, p^2(\partial_x)u dx\big|\\\
&=\big|\int \phi_{\alpha-1}(x)p^2(\partial_x)u \,\phi_{\alpha}(x)u\partial_xu dx\big|\\
&\le \|\phi_{\alpha-1}(x)p^2(\partial_x)u\|_2(\|\partial_x u\|_{\infty}\|\phi_{\alpha}(x)u\|_2)\\
&\le \epsilon \|\japa^{\alpha-1}p^2(\partial_x)u\|_2^2+c_{\epsilon}\|\partial_xu\|_{\infty}^2\|\phi_{\alpha}(x)u\|_2^2.
\end{split}
\end{equation*}

The second term on the right hand side above is uniformly bounded in $[\hat{t}_1, \hat{t}_2]$. After integration in the time interval $[\hat{t}_1, \hat{t}_2]$,
we find that
\begin{equation*}
\int_{\hat{t}_1}^{\hat{t}_2} \int |\japa^{\alpha-1}p^2(\partial_x)u|^2\,dxdt\le M + \epsilon\int_{\hat{t}_1}^{\hat{t}_2}\int |\japa^{\alpha-1}p^2(\partial_x)u|^2\,dxdt
\end{equation*}
where $M$ is bounded. Then it implies that
\begin{equation}\label{weights-31}
\int_{\hat{t}_1}^{\hat{t}_2}\int |\japa^{\alpha-1}p^2(\partial_x)u|^2\,dxdt <\infty.   \hskip15pt \text{(smoothing effect!)}
\end{equation}

Reapplying the above argument with $\japa^{2\alpha-1}$ instead of $\phi_{2\alpha-1}$ and using \eqref{weights-1} it follows that
\begin{equation}\label{weights-32}
\japa^{\alpha-\frac12}p(\partial_x)u(x,t)\in L^{\infty}([\hat{t}_1, \hat{t}_2]:L^2(\R)).
\end{equation}

We observe that $\dfrac{\partial_x}{p^2(\partial_x)}\in S^0_{1,0}$. This implies that
\begin{equation*}
\|\japa^{\alpha-1}\partial_xu\|_2=\|\japa^{\alpha-1}\frac{\partial_x}{p^2(\partial_x)}p^2(\partial_x)u\|_2 
\le c\|\japa^{\alpha-1}p^2(\partial_x)u\|_2.
\end{equation*}
Thus, from this we have that the estimate \eqref{weights-31} holds with the operator $\partial_x$ instead of $p^2(\partial_x)$.

In particular, there exist $\hat{\hat{t}}_1, \hat{\hat{t}}_2 \in [\hat{t}_1,\hat{t}_2]$ such that
\begin{equation}\label{weights-33}
\begin{cases}
\begin{aligned}
&\japa^{\alpha-1} \partial_x u(x, \hat{\hat{t}}_j)\in L^2(\R) \\
\hskip3pt\text{and}\hskip3pt&\\
&\japa^{\alpha-1}\partial_x u\in L^2 ([\hat{t}_1,\hat{t}_2]: L^2(\R)).
\end{aligned}
\end{cases}
\end{equation}

\medskip

As in the previous analysis we use the equation \eqref{ILW} to get that
\begin{equation}\label{weights-34}
\begin{split}
\frac12\frac{d}{dt}& \int (\partial_x u)^2\phi_{2\alpha-2}(x) dx+\int \lop\partial_x\partial_xu \,\partial_x u\,\phi_{2\alpha-2}(x) dx\\
&+\int \phi_{2\alpha-2}(x)\partial_x(u\partial_xu)\,\partial_x u\,\,dx =D_1(t)+D_2(t)+D_3(t)=0.
\end{split}
\end{equation}

Then integration in the time interval $[\hat{\hat{t}}_1, \hat{\hat{t}}_2]$ leads to
\begin{equation*}
\int_{\hat{\hat{t}}_1}^{\hat{\hat{t}}_2} D_1(t)\,dt  \le \underset{j=1}{\overset{2}{\sum}} \|\japa^{\alpha-1}\partial_x u(\cdot, \hat{\hat{t}}_j) \|_2^2<\infty.
\end{equation*}

Also integration by parts gives
\begin{equation*}
D_3(t)\simeq \int \phi_{2\alpha-2}(x)\partial_xu\partial_xu\partial_xu dx+ \int  \phi_{2\alpha-2}'(x)\partial_xu\partial_xu u dx.
\end{equation*}
Hence
\begin{equation*}
|D_3(t)|\lesssim\|\japa^{\alpha-1}\partial_xu\|_2^2\|\partial_xu\|_{\infty}+ \|\japa^{\alpha}u\|_2\|\japa^{\alpha-1}\partial_xu\|_2\|\partial_xu\|_{\infty}.
\end{equation*}
which by \eqref{weights-33} and the hypothesis ($s>\frac32$), the terms on the right hand side are bounded after integration in the time interval
$[\hat{\hat{t}}_1, \hat{\hat{t}}_2] \subseteq [\hat{t}_1,\hat{t}_2]$.

Finally, $D_2$ with $w=\partial_xu$ can be written as:
\begin{equation*}
\begin{split}
D_2(t) &= \int \lop\partial_x w \, w\phi_{2\alpha-2}(x) dx=-\int w \lop\partial_x (w\,\phi_{2\alpha-2}(x)) dx\\
&\hskip10pt-\int w \lop\partial_x w\,\phi_{2\alpha-2}(x) dx-\int w [\lop;\phi_{2\alpha-2}(x)]w dx,
\end{split}
\end{equation*}
which implies that
\begin{equation*}
D_2(t)= -\int w [\lop;\phi_{2\alpha-2}(x)]wdx,
\end{equation*}
but \eqref{lem3-comm} yields
\begin{equation*}
[\lop;\phi_{2\alpha-2}(x)]= c_1\phi_{2\alpha-2}'(x)\Omega'(\partial_x)+ c_2\phi_{2\alpha-2}''(x)\Omega''(\partial_x)+\mathcal{R}_3.
\end{equation*}

Since $\phi_{2\alpha-2}'''\in L^2(\R)$, we deduce that $\|\mathcal{R}_3w\|_2\le c\|w\|_2\le c\|\partial_x u\|_2$. We also have that
$\phi_{2\alpha-2}''\in L^{\infty}(\R)$ and $\Omega''(\partial_x)$ is an order zero operator. Thus 
\begin{equation*}
\|\phi_{2\alpha-2}''(x)\Omega''(\partial_x)w\|_2\le \|w\|_2\le \|\partial_x u\|_2.
\end{equation*}

Finally, since $\phi_{2\alpha-2}'(x)\Omega'(\partial_x)=\japa^{2\alpha-3}\pop\pop$ it follows that
\begin{equation*}
\begin{split}
\int \japa^{2\alpha-3}\pop\pop w\cdot w dx&= \int \pop(\japa^{2\alpha-3}\pop w)\, w \,dx\\
&\hskip10pt+\int w[\pop, \japa^{2\alpha-3}]\pop w \,dx\\
&= D_{21}(t)+D_{22}(t).
\end{split}
\end{equation*}

We start by estimating $D_{22}$. By formula \eqref{lem3-comm} we get
\begin{equation*}
[\pop, \japa^{2\alpha-3}]= c_1(\japa^{2\alpha-3})'p'(\partial_x)+\mathcal{R}_2
\end{equation*}
whose terms on the right hand side are both bounded in $L^2(\R)$.

Hence, it remains to consider $D_{21}$. Recall that $\pop$ is odd, then
\begin{equation*}
\begin{split}
D_{21}(t)=-\int\japa^{\alpha-\frac32} \pop w\, \pop w \,dx\lesssim \int \big| \japa^{2\alpha-3} \pop w|^2\,dx.
\end{split}
\end{equation*}

Thus after integration in the time interval ones concludes that
\begin{equation}\label{weights-40}
\int_{\hat{\hat{t}}_1}^{\hat{\hat{t}}_2} \int |\japa^{\alpha-\frac32}\pop\partial_x u|^2\,dxdt<\infty
\end{equation}
and reapplying the argument with $\japa^{2\alpha-2}$ instead of $\phi_{2\alpha-2}(x)$ we deduce that
\begin{equation}\label{weights-41}
\japa^{\alpha-1}\partial_xu(x,t) \in L^{\infty}([\hat{\hat{t}}_1, \hat{\hat{t}}_2]; L^2(\R)).
\end{equation}

\medskip

\subsection*{CLAIM} From our previous results it follows that

\vspace{2mm}

\begin{equation}\label{weights-42}
\begin{cases}
\begin{aligned}
&\hskip1cm\japa^{\alpha-\frac32} J^{\frac32} u \in L^2((\hat{\hat{t}}_1, \hat{\hat{t}}_2)\times \R)\\
&\hskip5pt\text{and there exist} \hskip5pt \bar{t}_1,\bar{t}_2\in (\hat{\hat{t}}_1, \hat{\hat{t}}_2)\hskip5pt\text{such that} \\
&\hskip1cm\japa^{\alpha-\frac32}J^{\frac32}u(x, \bar{t}_j)\in L^2(\R),\; j=1,2.\\
&\hskip5pt\text{Also}\\
&\hskip1cm\japa^{\alpha-1}Ju\in L^{\infty} ([\hat{\hat{t}}_1,\hat{\hat{t}}_2]: L^2(\R)).
\end{aligned}
\end{cases}
\end{equation}

\begin{proof} We shall write
\begin{equation*}
\japa^{\alpha-\frac32}J^{\frac32}u=-[J^{\frac32};\japa^{\alpha-\frac32}]u- J^{\frac32}(\japa^{\alpha-\frac32}u).
\end{equation*}
By  formula \eqref{lem3-comm} it follows that
\begin{equation*}
[J^{\frac32};\japa^{\alpha-\frac32}]u\simeq \japa^{\alpha-\frac52} J^{\frac12}u +\mathcal{R}u
\end{equation*}
where $\|\mathcal{R}u\|_2\le c\|u\|_2<\infty$ and since $(\alpha<2)$
\begin{equation*}
\|\japa^{\alpha-\frac52} J^{\frac12}u\|_2\le c\|J^{\frac12}u\|_2\le c\|u\|_{\frac12,2}<\infty.
\end{equation*}

Now,
\begin{equation*}
\|J^{\frac32}(\japa^{\alpha-\frac32}u)\|_2^2\le \|\japa^{\alpha-\frac32}u\|_2^2+\|\partial_x\pop(\japa^{\alpha-\frac32}u)\|_2^2
\end{equation*}
because $(1+\xi^2)^{\frac32}\le 1+|p(\xi)\xi|^2$.

On the other hand,
\begin{equation*}
\|\japa^{\alpha-\frac32}u\|_2\le \|\japa^{\alpha}u\|_2
\end{equation*}
and
\begin{equation}\label{weights-42a}
\begin{split}
\|\partial_x\pop(\japa^{\alpha-\frac32}u)\|_2\lesssim \|\pop(\japa^{\alpha-\frac32}\partial_xu)\|_2+\|\pop(\japa^{\alpha-\frac52}u)\|_2.
\end{split}
\end{equation}

Now applying the commutator \eqref{comm-go-1}, we deduce
\begin{equation*}
\begin{split}
\|\pop&(\japa^{\alpha-\frac52}u)\|_2 \le \|J^{\frac12}(\japa^{\alpha-\frac52}u)\|_2\\
&\lesssim \|\japa^{\alpha-\frac52}\|_{\infty}\|J^{\frac12}u\|_2+\|J^{\frac12}(\japa^{\alpha-\frac52})\|_2\|u\|_{\infty}<\infty.
\end{split}
\end{equation*}

The first term on the right hand side of \eqref{weights-42a} can be written as
\begin{equation*}
\begin{split}
\pop (\japa^{\alpha-\frac32}\partial_x u)=[\pop ; \japa^{\alpha-\frac32}] \partial_xu + \japa^{\alpha-\frac32}\pop\partial_x u.
\end{split}
\end{equation*}
Since the last term on the right hand side is in $L^2(\R)$ by hypothesis, it remains to estimate the first one. 

By formula \eqref{lem3-comm}
\begin{equation*}
\begin{split}
[\pop ; \japa^{\alpha-\frac32}] \partial_xu\simeq \japa^{\alpha-\frac52}p'(\partial_x)\partial_xu +
\japa^{\alpha-\frac72}p''(\partial_x)\partial_xu +\mathcal{R}_3\partial_xu
\end{split}
\end{equation*}
with
\begin{equation*}
\begin{split}
&\|\mathcal{R}_3\partial_xu\|_2\le \|\partial_xu\|_2,\\
&\|\japa^{\alpha-\frac52}p'(\partial_x)\partial_xu\|_2\le c\|p'(\partial_x)\partial_xu\|_2\le c\|u\|_{\frac12,2},\\
&\|\japa^{\alpha-\frac72}p''(\partial_x)\partial_xu\|_2\le c\|p''(\partial_x)\partial_xu\|_2\le c\|u\|_2.
\end{split}
\end{equation*}

\medskip

This completes the proof of our claim.

\end{proof}

Using \eqref{weights-42}, we write for $\theta=\alpha-\frac32\in (0,\frac12)$. Then 
\begin{equation}\label{weights-33b}
\begin{split}
\frac12\frac{d}{dt}& \int (J^{\frac32} u)^2\phi_{2\theta}(x) dx+\int \lop\partial_xJ^{\frac32}u \, J^{\frac32}u\,\phi_{2\theta}(x) dx\\
&+\int \phi_{2\theta}(x)J^{\frac32}(u\partial_xu)\,J^{\frac32}u\,\,dx =E_1(t)+E_2(t)+E_3(t)=0.
\end{split}
\end{equation}

From \eqref{weights-42} we obtain that
\begin{equation*}
\begin{split}
\int_{\bar{t}_1}^{\bar{t}_2} E_1(t)\,dt  \le \underset{j=1}{\overset{2}{\sum}} \|\japa^{\alpha-\frac32}J^{\frac32} u(\cdot,\bar{t}_j) \|_2^2. 
\end{split}
\end{equation*}

Next, we estimate $E_2(t)$. Setting $z=J^{\frac32}u$ and using that $\lop\partial_x$ is skew symmetric, we have
\begin{equation*}
\begin{split}
E_2(t)&=\int \lop\partial_x z \, z\,\phi_{2\theta}(x) dx=-\int z \lop\partial_x (z\,\phi_{2\theta}(x)) dx\\
&=-\int z \lop\partial_x z\,\phi_{2\theta}(x) dx-\int z [\lop\partial_x;\phi_{2\theta}(x)]z dx.
\end{split}
\end{equation*}
Thus 
\begin{equation*}
2E_2(t)= -\int z [\lop\partial_x;\phi_{2\theta}(x)]z dx.
\end{equation*}

A familiar argument yields
\begin{equation*}\label{weights-33c}
[\lop\partial_x;\phi_{2\theta}(x)]=c_1 \japa^{2\theta-1}\Omega'(\partial_x)+c_2(\japa^{2\theta-1})'\Omega''(\partial_x)+\mathcal{R}_3.
\end{equation*}
with
\begin{equation*}
\|\mathcal{R}_3z\|_2\le c \|z\|_2\le c \|J^{\frac32}u\|_2
\end{equation*}
and
\begin{equation*}
\|(\japa^{2\theta-1})'\Omega''(\partial_x)z\|_2\le c \|\Omega''(\partial_x)z\|_2\le c\|z\|_2\le c\|J^{\frac32}u\|_2.
\end{equation*}

%Next we estimate the first term on the right hand side of \eqref{weights-33c}.
Finally, we estimate
\begin{equation*}
\begin{split}
\int &\japa^{2\theta-1}\Omega'(\partial_x)z\,z dx=\int  \japa^{2\theta-1}\pop \pop z\,z dx\\
&=-\int \pop z\,\pop(\japa^{2\theta-1}z) dx\\
&=-\int \pop z\cdot \pop z \japa^{2\theta-1} dx-\int \pop z \cdot [\pop; \japa^{2\theta-1}]z dx\\
&=-\int |\japa^{\theta-1/2}\pop z|^2 dx+\int z\pop [\pop; \japa^{2\theta-1}]z dx\\
&=-\int |\japa^{\theta-1/2}\pop J^{\frac32}u|^2 dx+F_1
\end{split}
\end{equation*}

By applying the estimate (2.36) in \cite{MPS} it follows that
\begin{equation*}
\begin{split}
F_1 &\le c\|z\|_2\|\pop [\pop; \japa^{2\theta-1}]z\|_2\\
&\le c \|z\|_2\|(\japa^{2\theta-1})'\|_{1,2}\|z\|_2\le c\|z\|_2\simeq c\|J^{\frac32}u\|_2
\end{split}
\end{equation*}
since  $(\japa^{2\theta-1})'\simeq \japa^{2\theta-2}\le \japa^{-1}\in L^2(\R)$.

\medskip

Finally, we consider $E_3(t)$ in \eqref{weights-33b}. 

We first write $\phi_{2\theta}(x)=x\japa^{2\theta-1}=x\japa^{\theta-1}\japa^{\theta}=\lambda_1(x)\lambda_2(x)$.
\begin{equation*}
\begin{split}
E_3(t)&=\int \phi_{2\theta}(x)J^{\frac32}(u\partial_xu)\,J^{\frac32}u\,\,dx\\
&=\int \lambda_1(x)J^{\frac32}(u\partial_xu)\,J^{\frac32}u \lambda_2(x)\,dx\\
&=\int J^{\frac32}(\lambda_1(x)u\partial_xu)\,J^{\frac32}u \lambda_2(x) dx\\
&\hskip10pt-\int [J^{\frac32};\lambda_1(x)]\partial_x(u^2)\lambda_2(x)J^{\frac32}u dx \\
&=G_1(t)+G_2(t).
\end{split}
\end{equation*}

Now
\begin{equation*}
\begin{split}
G_2(t)&\le \|[J^{\frac32}; \lambda_1(x)]\partial_x (u^2)\|^2_2+\|\lambda_2(x)J^{\frac32}u\|^2_2\\
&\lesssim \|[J^{\frac32}; \lambda_1(x)]\partial_x (u^2)\|_2^2+\|\japa^{\theta}J^{\frac32}u\|_2^2
\end{split}
\end{equation*}
since $|\lambda_2(x)|\sim \japa^{\theta}$.  After integrating in the time interval the last term is bounded.

Next, we need to estimate $\|[J^{\frac32}; \lambda_1(x)]\partial_x (u^2) \|_2^2$. To do so, we use first the commutator formula \eqref{lem3-comm} to yield
\begin{equation*}
[J^{\frac32}; \lambda_1(x)]\partial_x=\lambda_1'(x) \tilde Q_{0,1}(\partial_x) J^{\frac12}\partial_x+ \lambda_1''(x) \tilde Q_{0,2}(\partial_x)J^{-\frac12}\partial_x
+\mathcal{R}\partial_x,
\end{equation*}
with $\tilde Q_{0,1}, \tilde Q_{0,2}\in S^0_{1,0}$. Since $\lambda_1^{(3)}\in L^2(\R)$, we have $\|\mathcal{R}\partial_x(u^2)\|_2 \le c\|u\|_{\infty}\|\partial_x u\|_2$.
Then
\begin{equation*}
\|\lambda_1''(x) \tilde Q_{0,2}(\partial_x)J^{-\frac12}\partial_x(u^2)\|_2\le c \|J^{-\frac12}\partial_x(u^2)\|_2\le c\|u\|_{\infty}\|J^{\frac12}u\|_2
\end{equation*}
because $\lambda_1''\in L^{\infty}(\R)$, and
\begin{equation*}
\begin{split}
\|\lambda_1'(x) \tilde Q_{0,1}(\partial_x)J^{\frac12}\partial_x(u^2)\|_2&\simeq \|\japa^{\theta-1}J^{\frac12}\partial_x(u^2)\|_2\le
\|\japa^{\theta-1}\|_{\infty}\|J^{\frac12}\partial_x(u^2)\|_2\\
&\le c\|J^{\frac32}(u^2)\|_2\le
c\|u\|_{\infty}\|J^{\frac32}u\|_2
\end{split}
\end{equation*}
where above we have used the commutator estimate  \eqref{kp-comm}.

It remains to bound $G_1(t)$. We write
\begin{equation*}
\begin{split}
G_1(t)&=\int J^{\frac32}(\lambda_1(x)u\partial_xu)\lambda_2(x)J^{\frac32}u dx\\
&= \int \lambda_1(x)uJ^{\frac32}\partial_xu\lambda_2(x)J^{\frac32}u dx\\
&\hskip10pt+\int[J^{\frac32};\lambda_1(x)u]\partial_xu\,\lambda_2(x)J^{\frac32}u dx\\
&=G_{11}(t)+G_{12}(t).
\end{split}
\end{equation*}

By integration by parts
\begin{equation*}
\begin{split}
G_{11}(t)&=-\frac12 \int \big(J^{\frac32}u\big)^2\partial_x(\lambda_1(x)\lambda_2(x)u) dx\\
&=\frac12 \int \big(J^{\frac32}u\big)^2\partial_xu\lambda_1(x)\lambda_2(x) dx\\
&\hskip10pt-\frac12\int \big(J^{\frac32}u\big)^2u(\lambda_1(x)\lambda_2(x))'\, dx
\end{split}
\end{equation*}
Since $(\lambda_1(x)\lambda_2(x))'\in L^{\infty}(\R)$, it follows that
\begin{equation*}
\begin{split}
|G_{11}(t)|&\le \|\partial_xu\|_{\infty}\int |\japa^{\theta}J^{\frac32}u|^2\,dx+ \|u\|_{\infty}\int |J^{\frac32}u|^2\,dx.
\end{split}
\end{equation*}

Finally, we consider $G_{12}$. We first apply the generalized commutator \eqref{kp-comm}
to lead to
\begin{equation}\label{weights-44}
\begin{split}
\|[J^{\frac32};\lambda_1(x)u]\partial_xu\|_2&\le c\big(\|\partial_x(\lambda_1(x)u)\|_{\infty^{-}}\|J^{\frac32-1}\partial_xu\|_{2^{+}}\\
&\hskip15pt+\|\partial_xu\|_{\infty}\|J^{\frac32}(\lambda_1(x)u)\|_2\big)
\end{split}
\end{equation}
where $\;\dfrac12=\dfrac{1}{\infty^{-}}+\dfrac{1}{2^{+}}$.

\medskip

By Sobolev embedding
\begin{equation}\label{weights-45}
\begin{split}
\|\partial_x(\lambda_1(x)u)\|_{\infty^{-}} &\lesssim \|J^{\frac32}(\japa^{\theta}u)\|_2\\
\text{and}\hskip15pt&\\
\|J^{\frac32-1}\partial_xu\|_{2^{+}}\|&\lesssim \|J^{\frac32^{+}}u\|_2\le \|J^su\|_2.
\end{split}
\end{equation}

It remains to bound  $\|J^{\frac32}(\japa^{\theta}u)\|_2$.

We write
\begin{equation*}
J^{\frac32}(\japa^{\theta}u)=\japa^{\theta}J^{\frac32}u+[J^{\frac32}; \japa^{\theta}]u
\end{equation*}
We observe that $\|\japa^{\theta}J^{\frac32}u\|_2^2$ will be bounded after integration in the time interval.

Employing formula \eqref{lem3-comm} we get
\begin{equation*}
[J^{\frac32}; \japa^{\theta}]u=( \japa^{\theta})'Q_{0,1}(\partial_x)J^{\frac12}u+( \japa^{\theta})''Q_{0,2}(\partial_x)J^{-\frac12}u +\mathcal{R}_3u.
\end{equation*}
with $Q_{0,1}, Q_{0,2}\in S^0_{1,0}$ and  $\|\mathcal{R}_3u\|_2 \le \|u\|_2$, since $(\japa^{\theta-2})^{(3)}\in L^2(\R)$. 

On the other hand,
\begin{equation*}
\|(\japa^{\theta})'Q_{0,1}(\partial_x)J^{\frac12}u\|_2\le c \|Q_{0,1}(\partial_x)J^{\frac12}u\|_2\le  c\|J^{\frac12}u\|_2\le \|u\|_{\frac12,2}, 
\end{equation*}
and similarly
\begin{equation*}
\|(\japa^{\theta})''Q_{0,2}(\partial_x)J^{-\frac12}u\|_2\le \|u\|_2
\end{equation*}
since $(\japa^{\theta})'$ and $(\japa^{\theta})''\in L^{\infty}(\R)$.

Thus after integrating in the time interval $[\bar{t}_1,\bar{t}_2]$ we find that
\begin{equation*}
\int_{\bar{t}_1}^{\bar{t}_2} \int |\japa^{\alpha-2}\pop J^{\frac32} u|^2\, dxdt<\infty
\end{equation*}
which implies that there exists $t^{*}$ such that $\japa^{\alpha-2}\pop J^{\frac32} u(x, t^{*})\in L^2(\R)$.

\medskip

\subsection*{CLAIM} $\japa^{\alpha-2}J^2u(x, t^{*})\in L^2(\R)$.

\vspace{3mm}

\begin{proof} 

We write
\begin{equation}\label{weights-47}
\japa^{\alpha-2}J^{\frac12}J^{\frac32}u=-[J^{\frac12};\japa^{\alpha-2}]J^{\frac32}u+J^{\frac12}(\japa^{\alpha-2}J^{\frac32}u)
\end{equation}
Since $\japa^{\alpha-4}\in L^2(\R)$ the commutator \eqref{lem3-comm} implies that 
\begin{equation}
\|[J^{\frac12};\japa^{\alpha-2}]J^{\frac32}u\le c\|J u\|_2\le \|u\|_{1,2}.
\end{equation}
The last term in \eqref{weights-47} can be written as
\begin{equation*}
\|J^{\frac12}(\japa^{\alpha-2}J^{\frac32}u)\|_2 \le \|\japa^{\alpha-2}J^{\frac32}u\|_2 +\|\pop(\japa^{\alpha-2}J^{\frac32}u)\|_2,
\end{equation*}
so using that the $\japa^{\alpha-2}\in L^{\infty}(\R)$ the first term above is bounded by $\|u(t)\|_{\frac32,2}$
and one can write the last term like
\begin{equation*}
\pop(\japa^{\alpha-2}J^{\frac32}u)=\japa^{\alpha-2}\pop J^{\frac32}u) +[\pop; \japa^{\alpha-2}]J^{\frac32}u.
\end{equation*}
Then after integration in the time interval the first term is bounded because of \eqref{weights-47} and using once more
the commutator \eqref{lem3-comm} that
\begin{equation*}
\|[\pop; \japa^{\alpha-2}]J^{\frac32}u\|\le c\|u(t)\|_{1,2}.
\end{equation*}

Thus
\begin{equation}\label{star}
\japa^{\alpha-2} J^2u(x, t^{*})\in L^2(\R) \iff \japa^{\alpha}u(x, t^{*})\in L^2(\R).
\end{equation}

From the interpolation estimate \eqref{lem2b-interpolation}, one has that \eqref{star}
implies $J^{\alpha}u\in L^2(\R)$. Therefore, $u(x, t^{*})\in H^{\alpha}(\R)$. Thus our result follows.

\end{proof}

\vspace{.5cm}

%%%%%%%%%%%%%%%%%%%%%%%%%
%%%%%%%%%%%%%%%%%%%%%%%%%

\subsection*{Case $s\in [2,\;\frac52)$ with $\; 2\le s< \alpha<\frac52$, \;\;  $\alpha=2+\theta$} \hskip10pt

\vspace{3mm}

We know from the case $s\in (\frac32, 2)$, \hskip10pt $s<\alpha<2$ that since 
\begin{equation*}
|x|^{\alpha}u(\cdot, t_j)\in L^2(\R), \; j=1,2,\; t_1\neq t_2,
\end{equation*}

\begin{equation}\label{caso2-1}
\begin{cases}
\begin{aligned}
&u\in C([-T,T]: H^{2^{-}}(\R)),\hskip10pt |x|^{2^{-}}u\in L^{\infty}([-T,T]: L^{2}(\R)),\\
&\hskip5pt\text{and}\\
&\|\japa^{\theta_1}J^{\theta_2}u\|_{L^{\infty}_TL^2_x}<\infty \hskip5pt\text{and}\hskip5pt \theta_1+\theta_2<2.
\end{aligned}
\end{cases}
\end{equation}

We shall only consider the estimate for the nonlinear term.

\subsection*{Step 1} \hskip15pt

\begin{equation*}
\begin{split}
\Big|\int x\japa^{2\alpha-1} u\partial_x u u\,dx\Big|&\simeq \Big| -\frac13\int \japa^{2\alpha-1}u^3\,dx\Big|\\
&\le \|\japa^{2^{-}}u\|_2\|\japa^{2^{-}}u\|_2\|\japa^{\beta}u\|_{\infty}.
\end{split}
\end{equation*}

By using Sobolev
\begin{equation*}
\|\japa^{\beta}u\|_{\infty}\le \|J^{\frac12+}(\japa^{\beta}u)\|_2\le \|\japa^{\beta} J^{\frac12+}u\|_2
\end{equation*}
if $\beta+\frac12^{+}<2 \iff \beta<\frac32$.

So 
\begin{equation*}
\begin{split}
&2\alpha-1< 2^{-}+2^{-}+\beta< 4^{-}+\frac32 \iff\\
&2(2+\theta)-1< 4^{-}+\frac32 \iff 2\theta<\frac52 \iff \theta<\frac54.
\end{split}
\end{equation*}

\subsection*{Step 2} \hskip15pt

\vspace{3mm}

We also know (see \eqref{weights-3}, \eqref{weights-20}) that
\begin{equation*}\label{caso2-1b}
\begin{cases}
|x|^{\alpha}u\in L^{\infty}([t_1,t_2]: L^{2}(\R)),\\
|x|^{\alpha-\frac12} J^{\frac12}u\in L^2([t_1, t_2]: L^2(\R)).
\end{cases}
\end{equation*}

Consider ($2+\theta-\frac12=\frac32+\theta$)

\begin{equation*}
\begin{split}
A_2&=\int \japa^{\frac32+\theta} J^{\frac12}(u\partial_xu) \japa^{\frac32+\theta}J^{\frac12}u dx\\
&\simeq \int \japa^{\frac32+2\theta}J^{\frac12}(uu)\,J^{\frac12}\partial_x u dx\\
&=\int  \japa^{\frac32+2\theta-\frac12^{-}}J^{\frac12}(uu)\, \japa^{\frac12^{-}}J^{\frac12}\partial_x u dx\\
&\lesssim \|\japa^{\frac32+2\theta-\frac12^{-}}J^{\frac12}(uu)\|_2\|\japa^{\frac12^{-}}J^{\frac12}\partial_x u\|_2\\
&:= A_{21}(t)\cdot A_{22}(t).
\end{split}
\end{equation*}

From \eqref{caso2-1} it follows that

\begin{equation*}
\|A_{22}(\cdot)\|_{L^{\infty}_T}\lesssim \|\japa^{\frac12^{-}}J^{\frac32}u\|_{L^{\infty}_TL^2_x}<\infty.
\end{equation*}

On the other hand,

\begin{equation*}
A_{21}(t)\lesssim \|J^{\frac12}(\japa^{\frac32+2\theta-\frac12^{-}} u^2)\|_2+\|[J^{\frac12};\japa^{\frac32+2\theta-\frac12^{-}}](u^2)\|_2
\end{equation*}

The second term can be bounded by using formula \eqref{lem3-comm}. For the first term we have that

\begin{equation*}
\begin{split}
\|J^{\frac12}(\japa^{\frac32+2\theta-\frac12^{-}} u^2)\|_2 &\le \|J^{\frac12}(\japa^{\theta_1} u)\|_2
\|\japa^{\theta_2}u\|_{L^{\infty}}\\
&\le \|J^{\frac12}(\japa^{\theta_1} u)\|_2\|J^{\frac12^{+}}\japa^{\theta_2}u\|_2\\
&\simeq \|\japa^{\theta_1}J^{\frac12} u\|_2\|\japa^{\theta_2}J^{\frac12^{+}}u\|_2
\end{split}
\end{equation*}

Above $\theta_1=\alpha-\frac12$ and $\theta+\frac12^{+}<2\iff \theta_2<\frac32$. 

So
\begin{equation*}
\begin{split}
&\theta_1+\theta_2=\alpha-\frac12+\frac32^{-}>3+2\theta-\frac12^{-}\\
&\iff 2+\theta-\frac12+\frac32^{-}>\frac52+2\theta\iff \frac12>\theta.
\end{split}
\end{equation*}

\subsection*{Step 3} \hskip15pt

\vspace{3mm}

Since
\begin{equation*}
\int \japa^{\alpha-1}\partial_x (\partial_xu)\japa^{\alpha-1}\partial_xu dx\simeq \int \japa^{2\alpha-2} \partial_xu\partial_xu\partial_xu dx.
\end{equation*}
We get that 
\begin{equation*}
\begin{split}
\|\int \japa^{\alpha-1}\partial_x (\partial_xu)\japa^{\alpha-1}\partial_xu dx\|_{L^2_T}\lesssim \|\japa^{\alpha-1}\partial_xu\|_{L^2_TL^2_x}\|\partial_xu\|_{L^{\infty}_TL^2_x}
\end{split}
\end{equation*}
the last two terms are bounded because the previous step and the fact that $u\in C([-T,T]:H^s(\R))$, $s>\frac32$.

\subsection*{Step 4} \hskip15pt

\vspace{3mm}

\begin{equation*}
\begin{split}
&\int \japa^{\alpha-\frac32} J^{\frac32}(u\partial_xu)  \japa^{\alpha-\frac32} J^{\frac32}u\,dx\\
&\simeq \int J^{\frac32}\big(\japa^{\alpha-\frac32}u\partial_xu\big)\japa^{\alpha-\frac32} J^{\frac32}u\,dx
-\int [J^{\frac32}; \japa^{\alpha-\frac32}]\partial_x (u^2)\,dx\\
&:= A_{41}+A_{42}.
\end{split}
\end{equation*}

From the previous step
$$
\|\japa^{\alpha-\frac32} J^{\frac32}u\|_{L^2_TL^2_x}< \infty.
$$

By formula \eqref{lem3-comm} one gets
\begin{equation*}
[J^{\frac32}; \japa^{\alpha-\frac32}]\partial_x (u^2)\sim \japa^{\alpha-\frac52} J^{\frac32}(u^2)+\text{easy to deal terms}.
\end{equation*}
Thus if $\alpha<\frac52$, the first term on the right hand side is in $L^2$.
This handles the estimate of $A_{42}$

We rewrite $A_{41}$ as
\begin{equation*}
\begin{split}
A_{41}&= \int \japa^{\alpha-\frac32}u J^{\frac32}\partial_xu \japa^{\alpha-\frac32}J^{\frac32}u dx\\
&\hskip10pt \int [J^{\alpha-\frac32}; \japa^{\alpha-\frac32}u]\partial_xu \japa^{\alpha-\frac32} J^{\frac32}u\,dx\\
&:= A_{411}+A_{412}.
\end{split}
\end{equation*}

By integration by parts
\begin{equation*}
\begin{split}
A_{411}(t) &\simeq \int \japa^{2\alpha-3}\partial_x u J^{\frac32}uJ^{\frac32}u dx+ \text{terms we can control}\\
&\lesssim \|\partial_x u\|_{\infty}\|\japa^{2\alpha-3}J^{\frac32}u\|_2^2.
\end{split}
\end{equation*}
Thus \begin{equation*}
\int_{t_3}^{t_4} A_{411}(t)\,dt \lesssim \|\partial_x u\|_{L^{\infty}_TL^{\infty}_x}
\|\japa^{2\alpha-3}J^{\frac32}u\|_{L^2_TL^2_x}^2.
\end{equation*}
The last term is bounded by the previous step.

Consider now
\begin{equation*}
\begin{split}
|A_{412}(t)| &\le \|[J^{\alpha-\frac32}; \japa^{\alpha-\frac32}u]\partial_xu \|_2\|\japa^{\alpha-\frac32}u] J^{\frac32}u\|_2\\
&\le \|[J^{\frac32}; \japa^{\alpha-\frac32}u]\partial_xu \|_2^2+ \|\japa^{\alpha-\frac32}u] J^{\frac32}u\|_2^2.
\end{split}
\end{equation*}
The last term on the right hand side is controlled after integration on time by using the previous step.

Let $\alpha-\frac32=\frac12+\theta$. Using the commutator \eqref{comm-go-2} we get the following chain of inequalities
\begin{equation*}
\begin{split}
 \|[J^{\frac32}; \japa^{\frac12+\theta}u]\partial_xu \|_2
& \le c\Big(\|(\japa^{\frac12+\theta}u)'\|_{2^{+}}\|J^{\frac32-1}\partial_xu\|_{\infty^{-}}\\
&\hskip10pt+\|\partial_x u\|_{\infty}\|J^{\frac32}(\japa^{\frac12+\theta}u)\|_2\Big)
\end{split}
\end{equation*}

By applying Sobolev  it follows that
\begin{equation*}
\begin{split}
\|(\japa^{\frac12+\theta}u)'\|_{2^{+}}\le \|J^{\frac32}(\japa^{\frac12+\theta}u)\|_2\sim  \|\japa^{\frac12+\theta} J^{\frac32}u)\|_2
\end{split}
\end{equation*}
by previous step with coefficients in $L^{\infty}_T$

$$
\|\partial_x u\|_{L^{\infty}_TL^{\infty}_x} <\infty.
$$
and 
$$
\|J^{\frac32-1}\partial_xu\|_{\infty^{-}}\sim \|J^{\frac32}u\|_{\infty^{-}} \le \|J^{2^{-}}u\|_2
$$
uniformly, bounded.

\subsection*{Step 5} \hskip15pt

\vspace{3mm}

Integration by parts leads to
\begin{equation*}
\begin{split}
\int \japa^{\theta}\partial_x^2(u\partial_xu)\japa^{\theta}\partial_x^2u\sim \int \japa^{\theta}(\partial_x u\partial_x^2u)\japa^{\theta}\partial_x^2u +\text{terms easy
to control}
\end{split}
\end{equation*}
but
\begin{equation*}
\int \japa^{\theta}(\partial_xu\partial_x^2u)\japa^{\theta}\partial_x^2u\, dx  \le \|\partial_x u\|_{\infty}\|\japa^{\theta}\partial_x^2u\|_2^2,
\end{equation*}
the last term is bounded by previous step.

Now, for $0\le \theta\le\frac12$, $\alpha=2+\theta$, $2\le \alpha<\frac52$, we have
\begin{equation*}
\|\japa^{2+\theta}u(x, t_j)\|_2 <\infty,
\end{equation*}
which implies that
\begin{equation*}
\|\japa^{\theta-\frac12}J^{\frac52}u(x, \hat{t}\, )\|_2 <\infty 
\end{equation*}
(Conclusion of the step 5 at one time $\hat{t}$).

By interpolation,
$$
\|J^{\alpha}u\|_2<\infty.
$$
Thus we have that
\begin{equation*}
u\in C([-T,T]: H^{\alpha}(\R)).
\end{equation*}

\vspace{.5cm}

%%%%%%%%%%%%%%%%%%%%%%%%%%%%%%%%%%%%%%%%%
\section{Sketch of Proof of Theorem \ref{to-the-right}}
%%%%%%%%%%%%%%%%%%%%%%%%%%%%%%%%%%%%%%%%%

To help the reading of the following sections we start with two lemmas whose proofs can be found in \cite{MPS} which regard properties of the operators $\lop$ and $q(\partial_x)$ defined in Section 2.

\begin{lemma}\label{est-1}
Let $\phi$ be a smooth function. The operator $\mathcal{L}$ satisfies
\begin{equation}\label{ilw-e15}
\big([\mathcal{L}(\partial_x)\partial_x;\phi]\,f\big)(x)= c\phi'(x)(\Omega'(\partial_x)f)(x) + (R_1f)(x),
\end{equation}
with $c>0$ and 
\begin{equation}\label{ilw-e16}
\|R_1f\|_2\le c\|\phi''\|^{1/2}_2\|\phi'''\|_2^{1/2}\|f\|_2.
\end{equation}
\end{lemma}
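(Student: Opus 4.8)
The plan is to move everything to the Fourier side, expand the symbol of $\lop\partial_x$ by Taylor's theorem to second order, recognize the first-order term as $c\,\phi'\,\Omega'(\partial_x)f$, and estimate the quadratic remainder by a convolution inequality followed by a scaling optimization. Recall from \eqref{ilw-oper}, \eqref{operatorL} that, with $\delta=1$, the operator $\lop\partial_x$ is the Fourier multiplier with symbol $m(\xi)=i\,\Omega_1(\xi)$ (up to the harmless $2\pi$ rescaling of the frequency variable), that $m\in C^\infty(\R)$, and that $m''$ is bounded on $\R$ because $\Omega^{(j)}(\partial_x)\in S^0_{1,0}$ for $j\ge 2$ (see \eqref{omega1}, \eqref{der-omega1}). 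For $f\in\mathcal S(\R)$, exactly the computation in \eqref{lem3-comm-2} gives
\begin{equation*}
\big([\lop\partial_x;\phi]\,f\big)^{\widehat{\hskip3pt}}(\xi)=\int \widehat\phi(\xi-\eta)\,\big(m(\xi)-m(\eta)\big)\,\widehat f(\eta)\,d\eta .
\end{equation*}

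Next I would insert the Taylor expansion $m(\xi)-m(\eta)=m'(\eta)(\xi-\eta)+\tfrac12\,m''\!\big(\theta\xi+(1-\theta)\eta\big)(\xi-\eta)^2$, with $\theta=\theta(\xi,\eta)\in[0,1]$, and split the integral. For the first-order piece, $(\xi-\eta)\widehat\phi(\xi-\eta)$ is a constant multiple of $\widehat{\phi'}(\xi-\eta)$ and $m'(\eta)\widehat f(\eta)$ is a constant multiple of $\big(\Omega'(\partial_x)f\big)^{\widehat{\hskip3pt}}(\eta)$, so this piece is the Fourier transform of $c\,\phi'\,\Omega'(\partial_x)f$; the two factors of $i$ (one from $m'$, one from $\widehat{\phi'}$) cancel, leaving $c>0$. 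Defining $R_1$ by
\begin{equation*}
\big(R_1f\big)^{\widehat{\hskip3pt}}(\xi)=\tfrac12\int \widehat\phi(\xi-\eta)\,(\xi-\eta)^2\,m''\!\big(\theta\xi+(1-\theta)\eta\big)\,\widehat f(\eta)\,d\eta ,
\end{equation*}
this establishes \eqref{ilw-e15} for $f\in\mathcal S(\R)$; the bound \eqref{ilw-e16} proved below then extends the identity to the stated generality.

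For the remainder, I would use $(\xi-\eta)^2\widehat\phi(\xi-\eta)=c'\,\widehat{\phi''}(\xi-\eta)$ and $\|m''\|_{\infty}<\infty$ to get the pointwise bound $\big|(R_1f)^{\widehat{\hskip3pt}}(\xi)\big|\le c\,\big(|\widehat{\phi''}|*|\widehat f|\big)(\xi)$; Young's inequality together with Plancherel then yields $\|R_1f\|_2\le c\,\|\widehat{\phi''}\|_1\,\|f\|_2$. It remains to pass from $\|\widehat{\phi''}\|_1$ to $\|\phi''\|_2^{1/2}\|\phi'''\|_2^{1/2}$: for every $\lambda>0$, Cauchy--Schwarz with the weight $(\lambda^2+\zeta^2)^{1/2}$ and Plancherel give
\begin{equation*}
\|\widehat{\phi''}\|_1\le\Big(\int_{\R}\frac{d\zeta}{\lambda^2+\zeta^2}\Big)^{1/2}\Big(\int_{\R}(\lambda^2+\zeta^2)\,|\widehat{\phi''}(\zeta)|^2\,d\zeta\Big)^{1/2}\le c\,\lambda^{-1/2}\big(\lambda\,\|\phi''\|_2+\|\phi'''\|_2\big),
\end{equation*}
and choosing $\lambda=\|\phi'''\|_2/\|\phi''\|_2$ yields exactly \eqref{ilw-e16}.

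Most of this is bookkeeping resting on the commutator identity \eqref{lem3-comm-2} and Plancherel. The only genuinely load-bearing points are: (i) that $m''=i\,\Omega_1''$ is bounded on all of $\R$ — equivalently, that $\Omega'(\partial_x)$ has order exactly one so that $[\lop\partial_x;\phi]$ drops to order zero — which is where the precise structure of the ILW symbol enters; and (ii) the scaling optimization in the last display, which is forced by the common homogeneity of the two sides of \eqref{ilw-e16} under $\phi(\cdot)\mapsto\phi(\lambda\,\cdot)$. Finally, the identity \eqref{ilw-e15}, first obtained on $\mathcal S(\R)$, extends to the asserted class of $f$ by density once $R_1$ is known to be bounded on $L^2$.
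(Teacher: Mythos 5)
Your proposal is correct and is essentially the argument the paper relies on: the paper itself gives no details, citing only \cite{MPS} (p.~1040) and Lemma 2.4 of \cite{FLMP}, and your route --- the Fourier-side commutator identity as in \eqref{lem3-comm-2}, a first-order Taylor expansion of the ILW symbol whose quadratic remainder is controlled by the boundedness of $\Omega_1''$ (i.e.\ $\Omega^{(j)}(\partial_x)\in S^0_{1,0}$ for $j\ge 2$), and the bound $\|\widehat{\phi''}\|_{L^1}\le c\,\|\phi''\|_2^{1/2}\|\phi'''\|_2^{1/2}$ obtained by Cauchy--Schwarz with the weight $(\lambda^2+\zeta^2)^{1/2}$ and optimization in $\lambda$ --- is exactly the mechanism behind those references and behind the paper's own Lemma \ref{lem3}. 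The sign bookkeeping giving $c>0$ and the $L^2$ remainder estimate via Young's inequality are both sound, so no gaps.
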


\begin{proof} We use the argument in \cite{MPS} page 1040 and the proof of 
Lemma 2.4 in \cite{FLMP}.
\end{proof}

\begin{lemma}\label{est-3}
Let $\rho$ be a smooth function and consider the operator $q(\partial_x)$, then
it holds that
\begin{equation}\label{est-2}
\|q(\partial_x)\,[q(\partial_x); \rho]\,f\|_2\le c\| \rho'\|_{2,2} \|f\|_2.
\end{equation}
\end{lemma}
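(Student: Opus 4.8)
The plan is to prove the estimate directly on the Fourier side, bypassing the commutator expansion of Lemma \ref{lem3} (which requires $q$ smooth, whereas $q$ has a corner at the origin) and using instead only the raw integral representation of the commutator. Recall from the computation in \eqref{lem3-comm-2} that, for $f,\rho\in\mathcal{S}(\R)$,
\begin{equation*}
\widehat{[q(\partial_x);\rho]f}(\xi)=\int \widehat{\rho}(\xi-\eta)\,\big(q(\xi)-q(\eta)\big)\,\widehat{f}(\eta)\,d\eta,
\end{equation*}
an identity valid for any locally bounded Fourier multiplier $q$. Multiplying by $q(\xi)$ we get
\begin{equation*}
\widehat{q(\partial_x)[q(\partial_x);\rho]f}(\xi)=\int \widehat{\rho}(\xi-\eta)\;q(\xi)\big(q(\xi)-q(\eta)\big)\;\widehat{f}(\eta)\,d\eta,
\end{equation*}
so the whole estimate reduces to a pointwise bound on the kernel $q(\xi)\big(q(\xi)-q(\eta)\big)$.

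The crux of the proof --- and the only place where the structure of $q$ enters --- is the inequality
\begin{equation*}
\big|\,q(\xi)\big(q(\xi)-q(\eta)\big)\,\big|\le c\,|\xi-\eta|,\qquad \xi,\eta\in\R.
\end{equation*}
To obtain it, I would use that $q\ge 0$ and $q(\xi)^2=\Omega_1'(\xi)$ (see \eqref{a7}) to factor
\begin{equation*}
q(\xi)-q(\eta)=\frac{q(\xi)^2-q(\eta)^2}{q(\xi)+q(\eta)}=\frac{\Omega_1'(\xi)-\Omega_1'(\eta)}{q(\xi)+q(\eta)}
\end{equation*}
whenever $q(\xi)+q(\eta)>0$ (and note that $q(\xi)\big(q(\xi)-q(\eta)\big)$ vanishes identically when $q(\xi)+q(\eta)=0$, i.e.\ when $\xi=\eta=0$). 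Hence
\begin{equation*}
\big|\,q(\xi)\big(q(\xi)-q(\eta)\big)\,\big|=\frac{q(\xi)}{q(\xi)+q(\eta)}\,\big|\Omega_1'(\xi)-\Omega_1'(\eta)\big|\le \big|\Omega_1'(\xi)-\Omega_1'(\eta)\big|,
\end{equation*}
and it remains only to observe that $\Omega_1'$ is globally Lipschitz: $\Omega_1'$ is smooth, and by \eqref{der-omega1} one has $\Omega_1''(\xi)=2\xi+O(\xi^3)$ as $|\xi|\downarrow 0$ and $\Omega_1''(\xi)\to\pm2$ as $|\xi|\uparrow\infty$, so $\|\Omega_1''\|_\infty<\infty$ and $|\Omega_1'(\xi)-\Omega_1'(\eta)|\le\|\Omega_1''\|_\infty|\xi-\eta|$.

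With the kernel bound in hand the conclusion is routine. Writing $|\xi-\eta|\,|\widehat{\rho}(\xi-\eta)|=c\,|\widehat{\rho'}(\xi-\eta)|$, we get
\begin{equation*}
\big|\widehat{q(\partial_x)[q(\partial_x);\rho]f}(\xi)\big|\le c\int|\widehat{\rho'}(\xi-\eta)|\,|\widehat{f}(\eta)|\,d\eta=c\,\big(|\widehat{\rho'}|*|\widehat{f}|\big)(\xi),
\end{equation*}
and Plancherel's theorem together with Young's convolution inequality yields
\begin{equation*}
\|q(\partial_x)[q(\partial_x);\rho]f\|_2\le c\,\|\widehat{\rho'}\|_{L^1}\,\|f\|_2.
\end{equation*}
Finally, by Cauchy--Schwarz, splitting off the $L^2(\R)$ weight $\ji\zeta\jd^{-1}$,
\begin{equation*}
\|\widehat{\rho'}\|_{L^1}\le \|\ji\zeta\jd^{-1}\|_{L^2}\,\|\ji\zeta\jd\,\widehat{\rho'}\|_{L^2}\le c\,\|\rho'\|_{1,2}\le c\,\|\rho'\|_{2,2},
\end{equation*}
which is \eqref{est-2}; the general case follows by density. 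The only genuine obstacle is the pointwise kernel estimate, and the identity $q^2=\Omega_1'$ together with the global Lipschitz character of $\Omega_1'$ dispatches it cleanly; everything else is bookkeeping. (One could instead start from the operator identity $q(\partial_x)[q(\partial_x);\rho]=[\Omega_1'(\partial_x);\rho]-[q(\partial_x);\rho]\,q(\partial_x)$ and apply Lemma \ref{lem3} to the smooth symbol $\Omega_1'$, but the direct Fourier computation above is shorter and sidesteps the non-smoothness of $q$ at the origin.)
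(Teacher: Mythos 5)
Your proof is correct and self-contained, which the paper's treatment is not: the paper does not prove Lemma \ref{est-3} but refers to \cite{MPS}, pp.~1040--1041 (and, for the variant \eqref{est-2b}, to the argument of Lemma 2.4 in \cite{FLMP}). The proof cited there also works on the Fourier side by bounding the kernel $q(\xi)\big(q(\xi)-q(\eta)\big)$, but it goes through the symmetrization $q(\xi)\big(q(\xi)-q(\eta)\big)=\tfrac12\big[(q(\xi)-q(\eta))^2+\Omega_1'(\xi)-\Omega_1'(\eta)\big]$ and the Lipschitz bounds for $q$ and for $\Omega_1'=q^2$ separately; the squared term produces a factor $|\xi-\eta|^2$, hence an $L^1$ bound on $\widehat{\rho''}$ in addition to $\widehat{\rho'}$, which is exactly where the norm $\|\rho'\|_{2,2}$ in \eqref{est-2} (and the shape of \eqref{est-2b}) comes from. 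Your factorization $q(\xi)-q(\eta)=\big(\Omega_1'(\xi)-\Omega_1'(\eta)\big)/\big(q(\xi)+q(\eta)\big)$, combined with $0\le q(\xi)/(q(\xi)+q(\eta))\le 1$ (legitimate since $q\ge 0$ and the kernel vanishes when $\xi=\eta=0$), eliminates the quadratic term and uses only the global Lipschitz bound for $\Omega_1'$; it yields a purely linear kernel bound and therefore the slightly stronger conclusion $c\,\|\rho'\|_{1,2}\|f\|_2$, which implies \eqref{est-2}. So the route is genuinely different from (and a bit sharper than) the one the paper relies on, at no extra cost.

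Two small points to tighten. First, the boundedness of $\Omega_1''$ should not be obtained by differentiating the asymptotic expansion \eqref{der-omega1} of $\Omega_1'$; justify it directly from the explicit symbol $\Omega_1(\xi)=\xi^2\coth\xi-\xi$, which is smooth with $\Omega_1''$ continuous and tending to $\pm 2$ at infinity, so $\|\Omega_1''\|_\infty<\infty$ as you need. Second, the closing appeal to density deserves a word: for $\rho$ smooth with only $\rho'\in H^1$, $\rho$ may grow linearly and $[q(\partial_x);\rho]f$ should be understood through the absolutely convergent Fourier integral you wrote; in the applications of the lemma in Sections 6--8 the weight $\rho$ is bounded, so this is harmless.
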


For a proof of Lemma  \eqref{est-1} see \cite{MPS} page 1040--1041. 

\begin{remark}
Following the argument in Lemma  2.4 in \cite{FLMP} one can write the following version of  estimate \eqref{est-2}:
\begin{equation}\label{est-2b}
\|q(\partial_x)\,[q(\partial_x); \rho]\,f\|_2\le c\,\| \rho''\|^{1/2}_2(\|\rho'\|_2^{1/2}+\|\rho'''\|^{1/2}_2)\|f\|_2.
\end{equation}
\end{remark}

\medskip

Next, we present a virial identity useful to establish Theorem \ref{to-the-right}.

\begin{lemma}\label{virial-right} Let $\varphi(x,t)\in L^{\infty}\cap C(\R: X)$ and $u\in C(\R, H^s(\R))$ be a solution of the IVP associated to equation \eqref{ILW}, then

\begin{equation}\label{virial-1}
\begin{split}
\frac{d}{dt}\int u^2(x,t) \varphi(x,t)\,dx&=-c\,\int \partial_x\varphi \,q(\partial_x)u \,q(\partial_x)u\,dx \\
&\hskip15pt- c\int u\,(q(\partial_x) \,[q(\partial_x); \partial_x\varphi]u)\,dx\\
&\hskip15pt - c\,\int u R_1(\cdot)u\,dx\\
&\hskip15pt+ \frac{2}{3}\int u^3\partial_x \varphi(x,t)\,dx+\int u^2\partial_t\varphi\,dx\\
& = E_1+E_2+E_3+E_4+E_5.
\end{split}
\end{equation}
where $R_1$ is given in Lemma \ref{est-1}.
\end{lemma}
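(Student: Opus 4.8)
The plan is to compute $\frac{d}{dt}\int u^2\varphi\,dx$ directly by differentiating under the integral sign, using the equation \eqref{ILW} to replace $\partial_t u$, and then reorganizing the resulting terms into the five pieces $E_1,\dots,E_5$. First I would write
\[
\frac{d}{dt}\int u^2\varphi\,dx = 2\int u\,\partial_t u\,\varphi\,dx + \int u^2\,\partial_t\varphi\,dx,
\]
where the second term is already $E_5$. For the first term, substitute $\partial_t u = -\mathcal{L}(\partial_x)\partial_x u - u\partial_x u$ (recall \eqref{operatorL}, so that $\mathcal{T}\partial_x^2 u + \partial_x u = -\mathcal{L}(\partial_x)\partial_x u$), giving a dispersive contribution $-2\int u\,\varphi\,\mathcal{L}(\partial_x)\partial_x u\,dx$ and a nonlinear contribution $-2\int u^2\varphi\,\partial_x u\,dx$. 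The nonlinear term is handled by integration by parts: $-2\int \varphi u^2 \partial_x u\,dx = -\frac23\int \varphi\,\partial_x(u^3)\,dx = \frac23\int u^3\,\partial_x\varphi\,dx = E_4$.

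The heart of the argument is the dispersive term. Since $\mathcal{L}(\partial_x)\partial_x$ is skew-adjoint (its symbol $-i\Omega_1(\xi)$ is purely imaginary with $\Omega_1$ odd), I would write
\[
-2\int u\,\varphi\,\mathcal{L}(\partial_x)\partial_x u\,dx = -\int u\,[\varphi;\mathcal{L}(\partial_x)\partial_x]u\,dx
= \int u\,[\mathcal{L}(\partial_x)\partial_x;\varphi]u\,dx,
\]
using that $\int u\,\varphi\,\mathcal{L}(\partial_x)\partial_x u\,dx + \int (\mathcal{L}(\partial_x)\partial_x u)\,\varphi\,u\,dx = 0$ so the "diagonal" part cancels and only the commutator survives (the factor $2$ being absorbed, up to a constant, into the definition of the constants $c$). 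Now apply Lemma \ref{est-1}: $[\mathcal{L}(\partial_x)\partial_x;\varphi]u = c\,\partial_x\varphi\,(\Omega'(\partial_x)u) + R_1 u$. The $R_1$ term gives $E_3 = -c\int u\,R_1(\cdot)u\,dx$ directly. For the main term, use \eqref{a7}, namely $\Omega'_1(\xi) = q(\xi)q(\xi)$, so $\Omega'(\partial_x) = q(\partial_x)^2$, and write
\[
\int u\,\partial_x\varphi\,q(\partial_x)^2 u\,dx = \int q(\partial_x)\big(\partial_x\varphi\,q(\partial_x)u\big)\,u\,dx
= \int \partial_x\varphi\,q(\partial_x)u\,q(\partial_x)u\,dx + \int \big([q(\partial_x);\partial_x\varphi]q(\partial_x)u\big)\,u\,dx,
\]
where I moved one factor of $q(\partial_x)$ onto $u$ using that $q(\partial_x)$ is self-adjoint ($q$ even and real). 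The first term is $E_1$ (up to the constant); in the second term, move $q(\partial_x)$ back onto $u$ once more, $\int ([q(\partial_x);\partial_x\varphi]q(\partial_x)u)u\,dx = \int (q(\partial_x)u)([q(\partial_x);\partial_x\varphi]^*\cdot)\ldots$ — more cleanly, rewrite it as $\int u\,q(\partial_x)[q(\partial_x);\partial_x\varphi]u\,dx$ after integrating by parts and using skew/self-adjointness of the commutator structure, which is exactly $E_2$. Collecting $E_1+E_2+E_3+E_4+E_5$ yields \eqref{virial-1}.

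The main obstacle is bookkeeping the adjoints and the precise constants: one must verify that the "diagonal" dispersive term genuinely cancels (this is where skew-adjointness of $\mathcal{L}(\partial_x)\partial_x$ enters, via $\Omega_1$ odd) and that redistributing the two copies of $q(\partial_x)$ produces exactly the commutator $q(\partial_x)[q(\partial_x);\partial_x\varphi]$ acting in the form $\int u\,(q(\partial_x)[q(\partial_x);\partial_x\varphi]u)\,dx$ appearing in $E_2$, rather than some transpose of it. Since the relevant commutator is bounded on $L^2$ by Lemma \ref{est-3} (and $R_1$ is bounded on $L^2$ by Lemma \ref{est-1}), every manipulation is justified for $u \in C(\R:H^s(\R))$ with $s$ large enough; the identity for general admissible $s$ then follows by the usual density/approximation argument, regularizing the data and passing to the limit, exactly as indicated in the remark following the proof of Theorem \ref{decay}.
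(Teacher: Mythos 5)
Your proposal is correct and follows essentially the same route as the paper: differentiate under the integral, substitute the equation, integrate by parts in the nonlinear term, use skew-adjointness of $\lop\partial_x$ together with Lemma \ref{est-1} to reduce the dispersive contribution to the commutator $[\lop\partial_x;\varphi]$, then write $\Omega'(\partial_x)=q(\partial_x)q(\partial_x)$ and move one factor of $q(\partial_x)$ by self-adjointness to produce $E_1$ and $E_2$. Only fix two transcription slips: by \eqref{operatorL} the substitution should read $\partial_t u=\lop\partial_x u-u\partial_x u$ (your sign would flip $E_1$, $E_2$, $E_3$, and the sign of $E_1$ is exactly what Theorem \ref{to-the-right} exploits), and the cancellation you invoke is $\int u\varphi\,\lop\partial_x u\,dx+\int \lop\partial_x(u\varphi)\,u\,dx=0$, not the sum you wrote, whose two integrals are identical.
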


\begin{proof}
\begin{equation}\label{virial-2}
\begin{split}
\frac{d}{dt}\int &u^2(x,t)\varphi(x,t)\,dx= 2\int u \partial_t u\varphi \,dx +\int u^2\partial_t\varphi\,dx\\
&=2\int u\big( \lop\partial_xu\big)\varphi\,dx -2\int u^2\partial_x u\varphi\,dx +\int u^2\partial_t\varphi\,dx\\
&=2\int u\big( \lop\partial_xu\big)\varphi\,dx+\frac23 \int u^3\partial_x\varphi\,dx +\int u^2\partial_t\varphi\,dx\\
&= 2\int u\big( \lop\partial_xu\big)\varphi\,dx+E_4+E_5.
\end{split}
\end{equation}

Using that the symbol $\lop(\xi)$ is even we have
\begin{equation}\label{virial-3}
\begin{split}
2\int u\varphi\big( \lop\partial_xu\big)\,dx&=
2\int  \lop(u \varphi)\partial_xu \,dx\\
&=-2\int \lop\partial_x(u \varphi) u \,dx\\
&= -2\int u\varphi   \lop\partial_xu\,dx\\
&\hskip12pt-2\int u\, [\lop\partial_x; \varphi] u\,dx.
\end{split}
\end{equation}

Thus 
\begin{equation}\label{virial-4}
\int u\varphi\,\big( \lop\partial_xu\big)\,dx=-\frac12 \int u\, [\lop\partial_x; \varphi] u\,dx.
\end{equation}

Using \eqref{ilw-e15} we obtain
\begin{equation}\label{virial-5}
\begin{split}
&\int u \varphi\,\big( \lop\partial_xu\big)\,dx\\
&=
-c\,\int u\partial_x\varphi \,\Omega_{\delta}'(\partial_x)u\,dx - c\,\int u R_1(\cdot)u\,dx\\
&= -c\,\int u\partial_x\varphi \,q(\partial_x)q(\partial_x)u\,dx - c\,\int u R_1(\cdot)u\,dx\\
&= -c\,\int q(\partial_x)(u\partial_x\varphi) \,q(\partial_x)u\,dx - c\,\int u R_1(\cdot)u\,dx\\
&=- c\,\int \partial_x\varphi \,q(\partial_x)u \,q(\partial_x)u\,dx 
- c\int u\,(q(\partial_x) \,[q(\partial_x); \partial_x\varphi]u)\,dx\\
&\hskip12pt- c\,\int u R_1(\cdot)u\,dx.
\end{split}
\end{equation}

Combining \eqref{virial-2} and \eqref{virial-5}, we complete the proof of the lemma.

\end{proof}

\vspace{1cm}

\begin{proof}[Proof of Theorem \ref{to-the-right}]

Choosing 
\begin{equation}\label{toright-1}
\varphi(x,t)=\chi\Big(\frac{x-c_1}{c_0 t}\Big).
\end{equation}
as in the proof of Theorem 1.5 in \cite{MPS} with $\chi$ satisfying
\begin{equation}
\label{chi}
\begin{cases}
\chi\in C^\infty(\R), \quad 0\leq \chi \leq 1 \quad \hbox{ in}\quad \R, \\
\chi(s) \equiv 0 \hskip34pt \hbox{if}\hskip 5pt s\leq 1, \quad  \chi(s) \equiv 1 \quad \hbox{if}\quad s\geq 2, \\
\chi' (s) >0, \hskip26pt \hbox{in}\hskip 5pt (1,2),\\
|\chi^{(k)} (s)|\leq 2^k, \hskip5pt \hbox{for any}\hskip 5pt s\in \R, \quad k=1,2,3,
\end{cases}
\end{equation}
and $c_0, c_1$ constants to be chosen latter. We deduce that
 
\begin{equation}\label{toright-2}
\partial_t \Big(\chi\Big(\frac{x-c_1}{c_0 t}\Big)\Big)=\chi'\Big(\frac{x-c_1}{c_0 t}\Big)\Big(\frac{x-c_1}{c_0 t}\Big)\frac{-1}{t}\le \chi'(\cdot)\Big(\frac{-1}{t}\Big)
\end{equation}
and
\begin{equation}\label{toright-3}
\partial_x \chi\Big(\frac{x-c_1}{c_0 t}\Big)=\chi'(\cdot)\frac{1}{c_0t}.
\end{equation}

\medskip

Using Lemma \ref{virial-right}, Lemma \ref{est-1} and Lemma \ref{est-2} we deduce that

\begin{equation}\label{toright-4}
\begin{split}
E_1&\le 0,\\
|E_2| &\le c_{\chi} \Big(\frac{1}{c_0t}\Big)^{\frac32}\Big(1+\frac{1}{c_0t}\Big)\|u(t)\|_2^2\le c_{\chi} \Big(\frac{1}{c_0t}\Big)^{\frac32}
\|u(t)\|_2^2,\\
|E_3|&\le c_{\chi}\Big(\frac{1}{c_0t}\Big)^{\frac52}\|u(t)\|_2^2\le  c_{\chi}\Big(\frac{1}{c_0t}\Big)^{\frac32}\|u(t)\|_2^2,\\
|E_4|&\le \frac{2\|u(t)\|_{\infty}}{3c_0 t}\int u^2(x,t)\chi'\Big(\frac{x-c_1}{c_0 t}\Big)\,dx,
\end{split}
\end{equation}
and 
\begin{equation}\label{toright-5}
E_5 \le \frac{-1}{t}\int u^2(x,t)\, \chi'\Big(\frac{x-c_1}{c_0 t}\Big)\,dx
\end{equation}

Now we use the above estimates, the identity \eqref{virial-1}, Sobolev embedding and the $L^2$ conserved quantity of the
ILW equation to yield the inequality
\begin{equation}\label{toright-6}
\begin{split}
\frac{d}{dt}\int &u^2(x,t) \chi\Big(\frac{x-c_1}{c_0 t}\Big)\,dx\\ 
&\le \frac{c_{\chi}}{(c_0t)^{\frac32}} \|u_0\|_2^2+
\left(\frac{2c_2\|u_0\|_{1,2}}{3c_0t}-\frac{1}{t}\right)\int u^2(x,t) \chi'\Big(\frac{x-c_1}{c_0 t}\Big)\,dx
\end{split}
\end{equation}
where $c_2$ is a universal constant. Hence taking $c_0$ such that 
\begin{equation*}
\frac{2c_2\|u_0\|_{1,2}}{3c_0}<1
\end{equation*}
and for any $\epsilon>0$,  we fix $t_1>0$, so that
\begin{equation*}
 \|u_0\|_2^2\int_{t_1}^{\infty}\frac{c_{\chi}}{(c_0t)^{\frac32}}\,dt \le \epsilon.
 \end{equation*}

 \medskip
 
 Integrating \eqref{toright-6} in the time interval $[t_1,t_2]$ it transpires that 
 \begin{equation*}
 \int u^2(x,t_2) \chi\Big(\frac{x-c_1}{c_0 t}\Big)\,dx\le  \int u^2(x,t_1) \chi\Big(\frac{x-c_1}{c_0 t}\Big)\,dx +\epsilon.
 \end{equation*}
 
 Next we fix $c_1$  such that
 \begin{equation*}
 \int u^2(x,t_1) \chi\Big(\frac{x-c_1}{c_0 t}\Big)\,dx\le \epsilon,
 \end{equation*}
and thus for any $t_2>t_1$ we get
\begin{equation*}
 \int_{x>c_1+2c_0t} u^2(x,t_2)\,dx\le  \int u^2(x,t_1) \chi\Big(\frac{x-c_1}{c_0 t}\Big)\,dx +\epsilon\le 2\epsilon.
 \end{equation*}
 
 To end the proof we fix $C_0= 3c_0$ to yield
 
 \begin{equation*}
 \underset{t\to\infty}{\limsup} \int_{x>C_0t} u^2(x,t_2)\,dx\le 2\epsilon.
 \end{equation*}
 
 \end{proof}

%%%%%%%%%%%%%%%%%%%%%%%%%%%%%%%%%%%
\section{Sketch Proof of Theorem \ref{L2ILW}}
%%%%%%%%%%%%%%%%%%%%%%%%%%%%%%%%%%%

\subsection{Auxiliary Notation} We start by introducing the notation need it in our arguments to prove 
 Theorem \ref{L2ILW} and \ref{energy-ILW}.

 Let $\phi$ be a smooth even and positive function such that 
\begin{equation}\label{weight}
\begin{cases}
\text{\;\;\rm i)}&\!\! \phi'(x)\leq 0, \;\text{for}\; x\ge 0, \\
\text{\;\rm ii)}&\!\! \phi(x)\equiv 1, \;\text{for}\; 0 \le x \le 1, \phi(x) = e^{-x}  \;\text{for}\; x\ge 2, \\
&\text{and}\;\; e^{-x} \leq \phi(x) \leq 3e^{-x}  \;\text{for}\; x\ge 0,\\
\text{\rm iii)}&\!\! |\phi'(x)| \leq c\,\phi(x) \;\text{and}\; |\phi''(x)| \leq c\,\phi(x)\\
 &\;\text{for some positive constant} \;c.
\end{cases}
\end{equation}

Let $\psi(x) = \displaystyle\int_0^x \phi (s)ds$. In particular, $|\psi (x)| \leq 1 + 3\displaystyle\int_1^{\infty}e^{-t}dt < \infty$.   \\

Next, we consider a smooth cut-off function $\zeta :\mathbb{R} \rightarrow \mathbb{R}$ such that 
\begin{equation}
 \zeta \equiv 1 \text{\hskip3pt in \hskip3pt} [0,1], {\hskip7pt} 0 \leq  \zeta\leq 1 \text{\hskip10pt and \hskip10pt} \zeta \equiv 0 \text{\hskip3pt in \hskip3pt} (\infty,-1]\cup[2,\infty),
\end{equation}
and define $\zeta_n(x) := \zeta(x-n)$.
 
For the parameters $\lambda, \sigma \in \mathbb{R}^+$,  we define 
\begin{equation*}
\phi_{\lambda}= \lambda\phi\left( \frac{x}{\lambda}\right) {\hskip 10 pt \text{and} \hskip10pt}  \psi_{\sigma}(x) = \sigma\psi\left( \frac{x}{\sigma}\right).
\end{equation*}

First, we start by considering some useful parameters involved in our argument of proof. 
 \begin{equation}\label{parameters}
 \rho(t) = \pm t^m, {\hskip5pt  \hskip5pt}  \mu_1(t) = \frac{t^b}{\log t} {\hskip10pt \hbox{and} \hskip10pt} \mu(t) = t^{(1-b)}\log^2t,
 \end{equation}
where $m$ and $b$ are positive constants satisfying the relations 
\begin{equation}\label{bmrelation}
  0 \leq m \leq 1 - \frac{b}{2}{\hskip10pt \hbox{and} \hskip10pt} 0< b \leq \min\left\{\frac{2}{3}, \frac{2}{2+q}\right\},\;\;\;\;\;\;q>1.
\end{equation}

Since
\begin{equation*}
  \frac{\mu'_1(t)}{\mu_1(t)} = \frac{b}{t} - \frac{1}{t\log t}{\hskip10pt \text{and} \hskip10pt} \frac{\mu'(t)}{\mu(t)} = \frac{(1-b)}{t} + \frac{2}{t\log t}
\end{equation*}
it readily follows that
    \begin{equation}\label{Relationderivatives}
        \frac{\mu'_1(t)}{\mu_1(t)} \sim \frac{\mu'(t)}{\mu(t)} = O\left(\frac1t\right), {\hskip10pt for \hskip10pt} t \gg 1
    \end{equation}
where $t \gg 1$ means the values of t such that $\mu_1'(t)$ is positive. In particular, $[10,+\infty) \subset \{t \gg 1\}$.

\medskip

For $u=u(x,t)$ a solution of the IVP  associated to \eqref{ILW} we define the functional\\ 
\begin{equation}\label{NC-1}
     \mathcal{I}(t) := \frac{1}{\mu(t)}\int_{\mathbb{R}} u(x,t)\psi_{\sigma}\left( \frac{x}{\mu_1(t)} \right) \phi_{\lambda }\left( \frac{x}{\mu_1^q(t)} \right)\,dx, 
        \end{equation}
for $q > 1$.

\begin{lemma}\label{Functionalbounded}
Let $u(\cdot,t) \in L^2(\mathbb{R})$, $t\gg 1$. The functional $\mathcal{I}(t)$  is well defined and bounded in time.
   
\begin{proof}
 The Cauchy-Schwarz inequality and the definition of the functions $\mu(t)$ and $\mu_1(t)$ imply that 
 \begin{equation}
  \begin{split}
   |\mathcal{I}(t)| &\leq \frac{1}{\mu(t)}\|u(t)\|_{L^2}\left\|\psi_{\sigma}\left( \frac{\cdot}{\mu_1(t)} \right)\right\|_{L^{\infty}}\left\|\phi_{\lambda}\left( \frac{\cdot}{\mu_1^q(t)}\right)\right\|_{L^2} \\
   & = \frac{\mu_1^{q/2}(t)}{\mu(t)}\|u(t)\|_{L^2}\|\psi_{\sigma}\|_{L^{\infty}}\|\phi_{\lambda}\|_{L^2}\\
   & \lesssim_{\sigma, \lambda} \frac{1}{t^{(2-2b-bq)/2}}\frac{1}{\log^{(4+q)/2}(t)}\|u_0\|_{L^2}. \\
  \end{split}
 \end{equation}
%  \begin{equation*}
%    \color{red}
 %   2-2b-bq > 0  \implies b < \frac{2}{2 + q}.
 %   \end{equation*}

Since $b$ satisfies the condition \eqref{bmrelation} we have that
\begin{equation*}
\sup_{t \gg 1}|\mathcal{I}(t)| < \infty.
\end{equation*}   
\end{proof}
\end{lemma}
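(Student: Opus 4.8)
The plan is to bound $\mathcal{I}(t)$ directly by the Cauchy--Schwarz inequality, peeling off the $L^2$-norm of the solution (which is controlled by the conserved quantity $I_2$ in \eqref{CL}) and absorbing the two cut-off weights into explicit powers of the time-dependent scales $\mu(t)$ and $\mu_1(t)$.

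First I would record the elementary scaling of the two weights appearing in \eqref{NC-1}. Since $|\psi(x)|\le 1+3\int_1^{\infty}e^{-t}\,dt<\infty$ uniformly in $x$, the factor $\psi_{\sigma}(x/\mu_1(t))=\sigma\,\psi\!\left(x/(\sigma\mu_1(t))\right)$ satisfies $\|\psi_{\sigma}(\cdot/\mu_1(t))\|_{L^{\infty}}\le \sigma\|\psi\|_{L^{\infty}}\lesssim_{\sigma}1$, with no dependence on $t$. For the second weight, the change of variables $x\mapsto \mu_1^{q}(t)z$ gives $\|\phi_{\lambda}(\cdot/\mu_1^{q}(t))\|_{L^2}=\mu_1^{q/2}(t)\,\|\phi_{\lambda}\|_{L^2}$, and one more scaling gives $\|\phi_{\lambda}\|_{L^2}=\lambda^{3/2}\|\phi\|_{L^2}<\infty$; in particular $\phi_{\lambda}(\cdot/\mu_1^{q}(t))\in L^2(\mathbb{R})$. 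Consequently the integrand in \eqref{NC-1} is the product of $u(\cdot,t)\in L^2$ with an $L^2$-function times an $L^{\infty}$-function, hence lies in $L^1(\mathbb{R})$, so $\mathcal{I}(t)$ is well defined.

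Next, Cauchy--Schwarz together with the bounds just obtained yields
\[
|\mathcal{I}(t)|\le \frac{1}{\mu(t)}\,\|u(t)\|_{L^2}\,\|\psi_{\sigma}(\cdot/\mu_1(t))\|_{L^{\infty}}\,\|\phi_{\lambda}(\cdot/\mu_1^{q}(t))\|_{L^2}\lesssim_{\sigma,\lambda}\frac{\mu_1^{q/2}(t)}{\mu(t)}\,\|u(t)\|_{L^2}.
\]
Invoking the $L^2$ conservation law in \eqref{CL} (or simply the hypothesis $u\in L^{\infty}(\mathbb{R}:L^2)$) I replace $\|u(t)\|_{L^2}$ by $\|u_0\|_{L^2}$. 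Plugging in $\mu_1(t)=t^{b}/\log t$ and $\mu(t)=t^{1-b}\log^2 t$ from \eqref{parameters} gives
\[
\frac{\mu_1^{q/2}(t)}{\mu(t)}=\frac{t^{bq/2}}{\log^{q/2}t}\cdot\frac{1}{t^{1-b}\log^2 t}=\frac{1}{t^{\,1-b-bq/2}}\,\frac{1}{\log^{(q+4)/2}t}.
\]

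The one thing to verify is that the exponent of $t$ is non-negative, and this is precisely where the constraint $0<b\le \frac{2}{2+q}$ in \eqref{bmrelation} is used: it is equivalent to $b+bq/2\le 1$, i.e. $1-b-bq/2\ge 0$. Hence $t^{-(1-b-bq/2)}\le 1$ for $t\ge 1$, while $\log t\ge 1$ on $\{t\gg1\}\supseteq[10,\infty)$, so we conclude $\sup_{t\gg1}|\mathcal{I}(t)|\lesssim_{\sigma,\lambda}\|u_0\|_{L^2}<\infty$. There is no genuine obstacle here; the only care needed is the bookkeeping of the powers produced by the two distinct scalings $x\mapsto x/\mu_1$ and $x\mapsto x/\mu_1^{q}$, and noting that in the borderline case $b=2/(2+q)$ the functional $\mathcal{I}(t)$ is merely bounded (decaying like $\log^{-(q+4)/2}t$) rather than a power of $t$.
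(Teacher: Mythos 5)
Your proposal is correct and follows essentially the same route as the paper: Cauchy--Schwarz, the scalings $\|\psi_{\sigma}(\cdot/\mu_1)\|_{L^{\infty}}\lesssim_{\sigma}1$ and $\|\phi_{\lambda}(\cdot/\mu_1^{q})\|_{L^2}=\mu_1^{q/2}\|\phi_{\lambda}\|_{L^2}$, conservation of the $L^2$-norm, and the observation that \eqref{bmrelation} makes the exponent $1-b-bq/2=(2-2b-bq)/2$ nonnegative. The extra details you supply (well-definedness of the integral and the borderline case $b=2/(2+q)$) are consistent with, and slightly more explicit than, the paper's argument.
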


 \begin{lemma}\label{BoundL1}
For any $t \gg 1$,  it holds that
\begin{equation}\label{NC-2}
\frac{1}{\mu_1(t)\mu(t)} \int_{\mathbb{R}}u^2(x,t)\,\psi_{\sigma}'\Big( \frac{x}{\mu_1(t)} \Big)\phi_{\lambda}\Big( \frac{x}{\mu_1^q(t)} \Big)\,dx \leq 4\frac{d}{dt}\mathcal{I}(t) + h(t), 
\end{equation}
where $h(t)\in L^1(t \gg 1)$.
\end{lemma}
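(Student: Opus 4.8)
The plan is to differentiate the functional $\mathcal{I}(t)$ in time, using the equation \eqref{ILW} to rewrite the time derivative of $u$, and then to show that the "good" term — the one carrying $\psi_\sigma'$ — dominates the left-hand side, while every other term is either absorbed into $\tfrac{d}{dt}\mathcal{I}(t)$ or is integrable in time. Concretely, writing $\Theta(x,t) := \psi_\sigma\!\big(\tfrac{x}{\mu_1(t)}\big)\phi_\lambda\!\big(\tfrac{x}{\mu_1^q(t)}\big)$, so that $\mathcal{I}(t) = \mu(t)^{-1}\int u\,\Theta\,dx$, I would first compute
\begin{equation*}
\frac{d}{dt}\mathcal{I}(t) = -\frac{\mu'(t)}{\mu(t)}\mathcal{I}(t) + \frac{1}{\mu(t)}\int \partial_t u\,\Theta\,dx + \frac{1}{\mu(t)}\int u\,\partial_t\Theta\,dx,
\end{equation*}
and then substitute $\partial_t u = -\mathcal{L}(\partial_x)\partial_x u - u\partial_x u$ (see \eqref{operatorL}), integrating by parts to move derivatives off $u$ and onto $\Theta$. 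The dispersive contribution $\int \mathcal{L}(\partial_x)\partial_x u\,\Theta\,dx$ is handled exactly as in Lemma \ref{virial-right}: using that $\mathcal{L}(\partial_x)\partial_x = -(\mathcal{T}\partial_x^2 + \partial_x)$ has even symbol and the commutator identity \eqref{ilw-e15}, one rewrites it as $-c\int \partial_x\Theta\,q(\partial_x)u\,q(\partial_x)u\,dx$ plus a commutator term controlled via \eqref{est-2}/\eqref{est-2b} by $\|(\partial_x\Theta)'\|_{2,2}\|u\|_2^2$ or the like; the nonlinear term gives, after integration by parts, $\tfrac13\int u^3\,\partial_x\Theta\,dx$.

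The key point is then the structure of $\partial_x\Theta$ and $\partial_t\Theta$. Differentiating $\Theta$ in $x$ produces $\mu_1^{-1}\psi_\sigma'\phi_\lambda + \mu_1^{-q}\psi_\sigma\phi_\lambda'$; by property \eqref{weight}(iii) one has $|\phi_\lambda'|\lesssim \phi_\lambda$, and $\mu_1^{-q}\ll \mu_1^{-1}$ for $q>1$, so the first summand is the dominant one and is exactly (up to the factor $\mu(t)$ and a constant) the left-hand side of \eqref{NC-2}; this is the term I want to keep. For the time derivative, using \eqref{Relationderivatives} one sees $\partial_t\Theta$ carries a factor $O(1/t)$ times something of the same shape as $\psi_\sigma'\phi_\lambda$ or of smaller order, so $\mu(t)^{-1}\int u\,\partial_t\Theta\,dx$ and the term $\tfrac{\mu'}{\mu}\mathcal{I}(t)$ are, by Cauchy–Schwarz and the $L^2$ conservation of $u$, bounded by $C t^{-1}\cdot (\text{the good term})^{1/2}$-type expressions or directly by an integrable function of $t$ after the same size count that appeared in Lemma \ref{Functionalbounded}. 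Collecting all the terms, moving the good term to the left, and choosing the numerical constant $4$ to absorb the harmless multiples of it, yields \eqref{NC-2} with $h(t)$ the sum of all the remainder terms, each of which I must check lies in $L^1(\{t\gg 1\})$: the dispersive remainder is $O(t^{-3/2})$-type (as in \eqref{toright-4}), the nonlinear term needs $\|u\|_\infty$ or $\|u\|_{L^3}^3$ combined with the smallness of the support weights $\mu_1^{-1}$ and the prefactor $\mu^{-1}$, and the time-derivative terms are $O(t^{-1})$ times quantities with enough decay from $\mu_1^{q/2}/\mu$ to beat $t^{-1}$ into $L^1$ — this is where the precise inequalities in \eqref{bmrelation}, in particular $b\le 2/(2+q)$, get used.

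The main obstacle I anticipate is the bookkeeping for the nonlinear term $\tfrac13\int u^3\,\partial_x\Theta\,dx$ and, more delicately, ensuring that all the leftover pieces of $\partial_x\Theta$ (the $\mu_1^{-q}\psi_\sigma\phi_\lambda'$ part) and of $\partial_t\Theta$ genuinely produce an $h\in L^1$ rather than merely a bounded function: one must exploit that $\phi_\lambda'$ is supported where $\phi_\lambda$ is exponentially small and that the extra powers of $\log t$ in $\mu(t)=t^{1-b}\log^2 t$ and $\mu_1(t)=t^b/\log t$ conspire to give summable decay. I would also have to be slightly careful that the commutator estimates of Section 2 apply with weights that depend on $t$; since all the $t$-dependence enters through the scaling parameters $\mu_1(t),\mu(t)$ and these are fixed at each time, the estimates hold with constants independent of $t$, and only the explicit scaling factors carry the time dependence, which is what makes the final integrability count work. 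The rest is routine once the dominant term has been isolated.
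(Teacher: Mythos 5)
Your proposal applies the quadratic virial machinery of Lemma \ref{virial-right} to $\mathcal{I}(t)$, but $\mathcal{I}(t)=\mu(t)^{-1}\int u\,\Theta\,dx$ is \emph{linear} in $u$, and the two identities you invoke are structurally unavailable there. The symmetrization that produces the commutator in Lemma \ref{virial-right} uses two copies of $u$ (skew-symmetry of $\lop\partial_x$ gives $2\int u\varphi\,\lop\partial_x u\,dx=-\int u[\lop\partial_x;\varphi]u\,dx$); for a linear functional one simply has $\int \lop\partial_x u\,\Theta\,dx=-\int u\,\lop\partial_x\Theta\,dx$, and no term of the form $-c\int \partial_x\Theta\,\big(q(\partial_x)u\big)^2dx$ can arise. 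Likewise the nonlinear contribution is $\int u\partial_x u\,\Theta\,dx=-\tfrac12\int u^2\,\partial_x\Theta\,dx$, not $\tfrac13\int u^3\,\partial_x\Theta\,dx$. This is not a bookkeeping issue: the entire point of choosing a functional linear in $u$ is that the quadratic positive quantity on the left of \eqref{NC-2} is generated precisely by this nonlinear term (it is $A_{1,2}$ in the paper's decomposition), whereas in your scheme that term is never produced; your candidate ``good term'' $\int\partial_x\Theta\,(q(\partial_x)u)^2dx$ controls $q(\partial_x)u$ rather than $u$, enters with the sign that makes it disposable rather than keepable, and belongs to the $H^{1/2}$ argument of Lemma \ref{ILWasymp}, not to this lemma. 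Moreover, a cubic term $\int u^3\partial_x\Theta\,dx$ could not be estimated under the hypotheses of Theorem \ref{L2ILW}, where $u_0\in L^2$ only: in the paper such a term is handled (in Lemma \ref{ILWasymp}, for $H^{1/2}$ solutions) via Lemma \ref{GNSinequality} together with Lemma \ref{intl2bounded}, and the latter is itself a consequence of the present lemma, so that route would also be circular.

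What the paper actually does, and what your outline misses, is the following. After substituting the equation, the dispersive contribution $\mu(t)^{-1}\int \lop\partial_x u\,\psi_{\sigma}\phi_{\lambda}\,dx$ is a pure remainder: one writes $\partial_x\lop=\partial_x^2\Psi(\partial_x)$ with $\Psi$ a bounded order-zero multiplier, integrates by parts twice to put $\partial_x^2$ on the weight $\psi_{\sigma}(x/\mu_1)\phi_{\lambda}(x/\mu_1^q)$, and concludes by Cauchy--Schwarz and mass conservation that this term lies in $L^1(\{t\gg1\})$ (this uses only $u\in L^2$, consistent with Theorem \ref{L2ILW}). The piece of the nonlinear term carrying $\phi_{\lambda}'$, the term $-\mu'\mu^{-2}\int u\,\psi_{\sigma}\phi_{\lambda}\,dx$, and the terms coming from $\partial_t$ hitting the weights are likewise shown to be integrable using \eqref{bmrelation} and $q>1$; the only delicate one, $A_2(t)$, which carries $(x/\mu_1)\psi_{\sigma}'$, is split by Young's inequality with $\epsilon=\mu_1'(t)$ and absorbed as $\tfrac12 A_{1,2}(t)$ plus an integrable function --- this absorption is where the factor $4$ in \eqref{NC-2} originates. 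Your remark about the $t$-dependence of the weights entering only through scaling is fine, but without identifying the nonlinearity as the source of the left-hand side of \eqref{NC-2}, and without a treatment of the dispersive term that needs only the $L^2$ norm of $u$, the proposed argument does not prove the lemma.
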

  
\begin{proof}
We have that
  \begin{equation}\label{l42-1}
   \begin{split}
   \frac{d}{dt}\mathcal{I}(t)
   &= \frac{1}{\mu(t)}\int_{\mathbb{R}}\partial_t\left( u\psi_{\sigma}\left( \frac{x}{\mu_1(t)} \right) \phi_{\lambda}\left( \frac{x}{\mu_1^q(t)} \right)\right)\,dx\\
   &\hskip10pt - \frac{\mu'(t)}{\mu^2(t)}\int_{\mathbb{R}} u\psi_{\sigma}\left( \frac{x}{\mu_1(t)} \right) \phi_{\lambda}\left( \frac{x}{\mu_1^q(t)} \right)\,dx\\
   &= A(t) + B(t).
   \end{split}
  \end{equation}
  
The Cauchy-Schwarz inequality and the conservation of mass, $I_2$ in \eqref{CL}, yield
    \begin{equation}\label{l42-2}
     \begin{aligned}
   |B(t)| &\leq \left|\frac{\mu'(t)}{\mu^2(t)}\right| \left\|u(t)\right\|_{L^2}\Big\|\psi_{\sigma}\Big( \frac{\cdot}{\mu_1(t)} \Big)\Big\|_{L^{\infty}}\Big\|\phi_{\lambda}\Big( \frac{\cdot}{\mu_1^q(t)} \Big)\Big\|_{L^2} \\
  % & = \Big|\frac{\mu'(t)}{\mu(t)}\Big| \Big |\frac{\mu_1^{q/2}(t)}{\mu(t)}\Big|\|u(t)\|_{L^2}\|\psi_{\sigma}\|_{L^{\infty}}\|\phi_{\delta}\|_{L^2}\\
   & \lesssim_{\sigma , \lambda} \frac{1}{t^{(4-2b-bq)/2}}\frac{1}{\log^{(4+q)/2}t}\|u_0\|_{L^2}.\\
   \end{aligned}
    \end{equation}
Hence $B(t)\in L^1(\{t \gg 1\})$ whenever $b\leq \frac{2}{2 +q}$. We remark that this term is bounded in $\{t \gg 1\}$.

% \begin{equation} \label{l42-1}
%      \sup_{t \gg 1} |\mathcal{I}_2(t)| \lesssim \sup_{t \gg 1} \frac{1}{t^{(4-2b-bq)/2}}\frac{1}{\log^{(4+q)/2}t} < \infty,
%    \end{equation}
%whenever $ b  \leq \frac{2}{2 + q}$ and $q > 0$. Then, $\mathcal{I}_2(t) \in L^1(\{t \gg 1\})$ since $b$ satisfies \eqref{bmrelation}. 

To estimate $A(t)$, we first differentiate in time to write
\begin{equation}\label{TermsEq.}
\begin{split}
A(t)&=  \frac{1}{\mu(t)}\int_{\mathbb{R}} u_t(x,t)\psi_{\sigma}\Big( \frac{{x}}{\mu_1(t)} \Big) \phi_{\lambda}\Big( \frac{{x}}{\mu_1^q(t)} \Big)\,dx\\
 &\hskip10pt - \frac{\mu_1'(t)}{\mu_1(t)\mu(t)}\int_{\mathbb{R}} u(x,t) \Big( \frac{{x}}{\mu_1(t)} \Big)\psi_{\sigma}'\Big( \frac{{x}}{\mu_1(t)} \Big) \phi_{\lambda}\Big( \frac{{x}}{\mu_1^q(t)} \Big)\,dx\\
&\hskip10pt  - \frac{q\mu_1'(t)}{\mu_1(t)\mu(t)}\int_{\mathbb{R}} u(x,t) \psi_{\sigma}\Big( \frac{{x}}{\mu_1(t)} \Big)\Big( \frac{{x}}{\mu_1^q(t)} \Big) \phi_{\lambda}'\Big( \frac{{x}}{\mu_1^q(t)} \Big)\,dx\\
& = A_1(t)+A_2(t)+A_3(t).
\end{split}
\end{equation}

Using the equation \eqref{ILW} and integrating by parts yield
\begin{equation} \label{l42-3}
\begin{split}
A_1(t) &=\frac{1}{\mu(t)}\int_{\mathbb{R}}\lop\partial_x u(x,t)\,\left(\psi_{\sigma}\left(\frac{{x}}{\mu_1(t)} \right) \phi_{\lambda}\left(\frac{{x}}{\mu_1^q(t)} \right)\right)\,dx\\
&\hskip15pt + \frac{1}{2\mu(t)\mu_1(t)}\int_{\mathbb{R}} u^2(x,t)\psi_{\sigma}'\left(\frac{{x}}{\mu_1(t)} \right) \phi_{\lambda}\left(\frac{{x}}{\mu_1^q(t)} \right)\,dx\\
    &\hskip15pt +\frac{1}{2\mu(t)\mu_1^q(t)}\int_{\mathbb{R}}u^2(x,t) \psi_{\sigma}\left(\frac{{x}}{\mu_1(t)} \right) \phi_{\lambda}'\left(\frac{{x}}{\mu_1^q(t)} \right)\,dx\\
    &=: A_{1,1}(t) + A_{1,2}(t) + A_{1, 3}(t).
    \end{split}
\end{equation}

We remark that  $ \, A_{1, 2}(t)$ is the term we want to estimate in \eqref{l42-3}. Then we need to show that the reminder terms
are in $L^1(\{t\gg 1\})$.

We notice that $\partial_x\lop= \partial_x^2\Psi(\partial_x)$ where the symbol of $i\Psi(\cdot)$, 
$i\Psi(\xi)=\frac{1}{\xi}-\coth(\xi)$ belongs to $C(\R)\cap L^{\infty}(\R)$, odd, real value, and order zero. 
Using this and integrating by parts we obtain that
\begin{equation}\label{l42-3b}
 A_{1,1}(t)= \frac{1}{\mu(t)}\int_{\mathbb{R}}\Psi(\partial_x)u(x,t)\,\partial_x^2\Big(\psi_{\sigma}\Big(\frac{{x}}{\mu_1(t)} \Big) \phi_{\lambda}\Big(\frac{{x}}{\mu_1^q(t)} \Big)\Big)\,dx.
 \end{equation}

 Differentiating with respect to $x$, using the Cauchy-Schwarz inequality, the fact that the symbol of $i\Psi_{\delta}(\partial_x)$ is of order zero, the conservation of mass, and the definition of $\mu(t)$ and $\mu_1(t)$, we deduce that  
\begin{equation}\label{l42-4}
\begin{split}
|A_{1,1}(t)|&\le\frac{1}{\mu(t)\mu_1^{3/2}(t)}\|u(t)\|_{L^2}\|\psi_{\sigma}''\|_{L^2}\|\phi_{\lambda}\|_{L^{\infty}}\\
&\hskip15pt +\frac{1}{\mu(t)\mu_1^{(1+2q)/2}(t)}\|u(t)\|_{L^2}\|\psi_{\sigma}'\|_{L^2}\|\phi_{\lambda}'\|_{L^{\infty}}\\
&\hskip15pt+\frac{1}{\mu(t)\mu_1^{3q/2}(t)}\|u(t)\|_{L^2}\|\psi_{\sigma}\|_{L^{\infty}}\|\phi_{\lambda}''\|_{L^2}\\
&\lesssim_{\sigma,\lambda}  \frac{ \|u_0\|_{L^2}}{t^{(2 + b)/2}\log^{1/2}t}+\frac{ \|u_0\|_{L^2}}{t^{(2 - b + 2bq)/2}\log^{(\frac{3}{2}-q)}(t)} \\
&\hskip15pt +\frac{ \|u_0\|_{L^2}}{t^{(2 - 2b + 3bq)/2}\log^{(4-3q)/2}t}.
\end{split} 
\end{equation}

Since $q>1$ it follows  that $A_{1,1}(t)\in L^1(\{t\gg1\})$.

The term $A_{1,3}$ can be bounded by employing the conservation of mass, and the definition of 
$\mu(t)$ and $\mu_1(t)$.
\begin{equation}\label{l42-9}
\begin{split}
 |A_{1,3}| &\leq  \frac{\|u(t)\|_{L^2}^2}{2|\mu(t)\mu_1^q(t)|}\left\|\psi_{\sigma}\left(\frac{{x}}{\mu_1(t)} \right)\right\|_{L^{\infty}}\left\|\phi_{\lambda}'\left(\frac{{x}}{\mu_1^q(t)} \right)\right\|_{L^{\infty}}\\ 
      &\lesssim_{\sigma,\lambda} \frac{\|u_0\|_{L^2}^2}{t^{1-b + bq}\log^{(2-q)}t},
\end{split}
\end{equation}
because of  $q > 1$ one has that $A_{1,3}(t) \in L^1(\{t \gg1\})$.
               
\medskip

Next we turn our attention to the other terms of (\ref{TermsEq.}). First, by means of Young's inequality, we have for $\epsilon > 0$
\begin{equation*}
\begin{split}
 |A_2(t)|
  &\leq \Big|\frac{\mu_1'(t)}{\mu_1(t)\mu(t)} \Big|\int_{\mathbb{R}}\Big|\psi_{\sigma}'\Big( \frac{{x}}{\mu_1(t)} \Big) \phi_{\lambda}\Big( \frac{{x}}{\mu_1^q(t)} \Big)\Big|\Big[ \frac{u^2}{4\epsilon} + 4\epsilon\Big|\frac{{x}}{\mu_1(t)}\Big|^2\Big]\,dx    \\
  &\leq \frac{1}{4\epsilon}\Big|\frac{\mu_1'(t)}{\mu_1(t)\mu(t)} \Big|\int_{\mathbb{R}} u^2(x,t)\Big|\psi_{\sigma}'\Big( \frac{{x}}{\mu_1(t)} \Big) \phi_{\lambda}\Big( \frac{{x}}{\mu_1^q(t)} \Big)\Big|\,dx\\
  &\hskip15pt + 4\epsilon\Big|\frac{\mu_1'(t)}{\mu_1(t)\mu(t)} \Big| \,\Big\|\phi_{\lambda}\Big( \frac{\cdot}{\mu_1^q(t)} \Big)\Big\|_{L^{\infty}}
 \Big \|\Big(\frac{\cdot}{\mu_1(t)}\Big)^2\psi_{\sigma}'\Big(\frac{\cdot}{\mu_1(t)}\Big)\Big\|_{L^1}.
    \end{split}
    \end{equation*}

Then, taking $\epsilon = \mu_1'(t)$, which is positive in $\{t\gg 1 \} $, we get
\begin{equation}\label{l42-10}
\begin{split}
 |A_2(t)| & \leq \Big|\frac{1}{4\mu_1(t)\mu(t)} \Big|\int_{\mathbb{R}}u^2\psi_{\sigma}'\Big( \frac{{x}}{\mu_1(t)}\Big) \phi_{\lambda}\Big( \frac{{x}}{\mu_1^q(t)} \Big)dx \\
        &\hskip15pt+C_{\sigma,\lambda}\frac{(b\log t-1)^2}{t^{3-3b}\log^6t}\\
        &= \frac{1}{2}A_{1,2}(t) +C_{\sigma, \lambda}\frac{(b\log t-1)^2}{t^{3-3b}\log^6t},
     \end{split}
    \end{equation}
where $C_{\sigma,\lambda}$ is a constant depending on $\sigma$ and $\lambda$.

Notice that the last term in the last inequality of \eqref{l42-10} is integrable in ${t \gg 1}$ since $b < \frac{2}{3}$.

Finally, we consider the term $A_3(t)$. Young's inequality and the conservation of mass tell us that
    \begin{equation}\label{l42-11}
    \begin{split}
     |A_3(t)|&\leq \left| \frac{q\mu_1'(t)}{\mu_1(t)\mu(t)}\right|\|\psi_{\sigma}\|_{L^{\infty}}\int_{\mathbb{R}} t^{1-b} u^2(x,t)\,dx\\
      &\hskip15pt +\left| \frac{q\mu_1'(t)}{\mu_1(t)\mu(t)}\right| \|\psi_{\sigma}\|_{L^{\infty}}\!\!\int_{\mathbb{R}}\frac{1}{t^{1-b}}\left[\left( \frac{{x}}{\mu_1^q(t)} \right) \phi_{\lambda}'\left( \frac{x}{\mu_1^q(t)} \right)\right]^2\,dx\\
     & \lesssim_{\sigma,\lambda} \left| \frac{qt^{1-b}\mu_1'(t)}{\mu_1(t)\mu(t)}\right| + \left| \frac{q\mu_1'(t)\mu_1^q(t)}{t^{1-b}\mu_1(t)\mu(t)}\right|.
    \end{split}
    \end{equation}
    
    Hence, the conditions on \eqref{Relationderivatives} imply 
\begin{equation}\label{l42-12}
|A_{3}(t)| \lesssim_{\sigma,\lambda} \frac{1}{t\log^2t} + \frac{1}{t^{3-b(2+q)}\log^{2+q}t}.
\end{equation}
Since $b \leq \frac{2}{2+q} $, $A_3(t) \in L^1(\{t \gg 1\})$.

Gathering the information in \eqref{l42-1}, \eqref{TermsEq.}, \eqref{l42-4}, \eqref{l42-9}, \eqref{l42-10}, \eqref{l42-11} 
and \eqref{l42-12} together we conclude that
    \begin{equation}\label{l42-13}
    \begin{split}
\frac{1}{2\mu(t)\mu_1(t)}\int_{\mathbb{R}} u^2(x,t)\psi_{\sigma}'\left(\frac{{x}}{\mu_1(t)} \right) \phi_{\lambda}\left(\frac{{x}}{\mu_1^q(t)} \right)\,dx\leq & \frac{d}{dt}\mathcal{I}(t)+h(t)
    \end{split}
    \end{equation}
where $h(t)\in L^1(\{t\gg 1\})$, as desired.
\end{proof}

The next lemma will give us a key bound in our analysis.

\begin{lemma}\label{intl2bounded}
 Assume that $u_0 \in L^2(\R)$. Let $u \in C(\mathbb{R}: L^2(\mathbb{R}))\cap L^{\infty}(\mathbb{R}: L^2(\mathbb{R})) $ be the solution of the IVP associated to \eqref{ILW} given in \cite{IfSa}. Then, there exists a constant $0< C < \infty$, such that 
 \begin{equation}
  \int_{ \{t \gg 1\} } \frac{1}{t\log t}\int_{B_{t^b}} u^2(x,t)\, dxdt \leq C. 
 \end{equation}

\end{lemma}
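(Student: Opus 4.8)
The strategy is to integrate the key differential inequality from Lemma \ref{BoundL1} in time and exploit that the right-hand side has an integrable remainder. First I would recall the relation between the weight $\psi_\sigma'$ appearing in \eqref{NC-2} and the characteristic function of a ball: since $\psi_\sigma'(x/\mu_1(t)) = \phi_\sigma(x/\mu_1(t)) \geq 0$, and by property \eqref{weight} ii) this quantity is $\geq e^{-1}$ (say) on the region $|x/\mu_1(t)| \leq 2\sigma$, and similarly $\phi_\lambda(x/\mu_1^q(t)) \geq e^{-1}$ on $|x/\mu_1^q(t)| \leq 2\lambda$. Choosing $\sigma, \lambda$ fixed once and for all, and remembering that $\mu_1(t) = t^b/\log t$ while $\mu_1^q(t) = t^{bq}/\log^q t$ with $q>1$ so that $\mu_1^q(t) \gg \mu_1(t) \gg t^b$ for $t \gg 1$, one sees that $B_{t^b}(0) \subset \{|x| \leq 2\sigma\mu_1(t)\} \cap \{|x|\leq 2\lambda \mu_1^q(t)\}$ for $t$ large. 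Hence on $B_{t^b}(0)$ the integrand $\psi_\sigma'(x/\mu_1(t))\phi_\lambda(x/\mu_1^q(t))$ is bounded below by a positive constant $c_{\sigma,\lambda}$.

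Next I would use this lower bound together with \eqref{NC-2} to write
\begin{equation*}
\frac{c_{\sigma,\lambda}}{\mu_1(t)\mu(t)} \int_{B_{t^b}(0)} u^2(x,t)\,dx \;\leq\; \frac{1}{\mu_1(t)\mu(t)}\int_{\mathbb R} u^2(x,t)\,\psi_\sigma'\Big(\tfrac{x}{\mu_1(t)}\Big)\phi_\lambda\Big(\tfrac{x}{\mu_1^q(t)}\Big)\,dx \;\leq\; 4\frac{d}{dt}\mathcal I(t) + h(t).
\end{equation*}
Now divide through by the appropriate power: since $\mu_1(t)\mu(t) = \tfrac{t^b}{\log t}\cdot t^{1-b}\log^2 t = t\log t$, the left-hand side is exactly $\tfrac{c_{\sigma,\lambda}}{t\log t}\int_{B_{t^b}} u^2\,dx$, which is precisely the integrand we want to control. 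Integrating over $\{t \gg 1\}$, say over $[10, T]$ and letting $T\to\infty$, the term $\int_{10}^T \tfrac{d}{dt}\mathcal I(t)\,dt = \mathcal I(T) - \mathcal I(10)$ is bounded uniformly in $T$ by Lemma \ref{Functionalbounded}, and $\int_{\{t\gg1\}} h(t)\,dt < \infty$ since $h \in L^1(\{t\gg1\})$ by Lemma \ref{BoundL1}. This yields the stated bound with $C = C(\sigma,\lambda,\|u_0\|_{L^2})$.

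The only genuinely delicate point is the geometric containment $B_{t^b}(0) \subset \operatorname{supp}$-type estimate, i.e.\ verifying that on the ball of radius $t^b$ the product of the two cut-off profiles stays bounded away from zero; this requires checking $t^b \leq 2\sigma \mu_1(t) = 2\sigma t^b/\log t$ which is \emph{false} for large $t$, so in fact $\psi_\sigma'(x/\mu_1(t))$ is only $\geq e^{-1}$ on a smaller region. The correct observation is that $\psi_\sigma' = \phi_\sigma$ equals $1$ on $[0,\sigma]$ (from \eqref{weight} ii) rescaled) and $\phi_\sigma(x/\mu_1(t)) \geq e^{-|x|/\mu_1(t)}$ in general, so on $B_{t^b}(0)$ we have $|x|/\mu_1(t) = |x|\log t / t^b \leq \log t$, giving $\psi_\sigma'(x/\mu_1(t)) \geq e^{-\log t} = t^{-1}$ at worst — a loss of a power of $t$ that would break integrability. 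To avoid this I would instead follow \cite{MMPP}: use that the conservation of mass gives $\int_{B_{t^b}} u^2 \leq \int_\mathbb R u^2 = \|u_0\|_2^2$ directly whenever the weighted integral already controls things, or more precisely observe that $\psi_\sigma'(x/\mu_1(t)) \phi_\lambda(x/\mu_1^q(t)) \geq c\, e^{-|x|/\mu_1(t)}$ and $B_{t^b} \subset \{|x| < t^b\}$ while $\mu_1(t)= t^b/\log t$, so one restricts to $B_{\mu_1(t)}(0) \supset$... — the clean route is to note $t^b = \mu_1(t)\log t$, hence $B_{t^b}$ is not inside $B_{\mu_1(t)}$; instead one uses $B_{\mu_1(t)}(0) \subset B_{t^b}(0)$ and lower-bounds only on $B_{\mu_1(t)}(0)$, which suffices because the statement of the lemma can then be applied with the ball $B_{t^b}$ replaced by the comparison argument of \cite{MMPP} that trades regions — this bookkeeping with the logarithmic factors is where I expect to spend the real effort, and it is exactly the mechanism producing the $\log^{(1-b)/b}$ rate in \eqref{EQ2} downstream.
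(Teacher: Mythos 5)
Your main line of argument is exactly the paper's: bound the integrand over $B_{t^b}$ pointwise from below by the weight $\psi_{\sigma}'\big(\tfrac{x}{\mu_1(t)}\big)\phi_{\lambda}\big(\tfrac{x}{\mu_1^q(t)}\big)$, invoke \eqref{NC-2}, use $\mu(t)\mu_1(t)=t\log t$, integrate in $t$, and conclude from the boundedness of $\mathcal I$ (Lemma \ref{Functionalbounded}) together with $h\in L^1(\{t\gg1\})$ (Lemma \ref{BoundL1}). That part is fine and coincides with the paper's proof.

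The gap is in the one step you yourself flag and then fail to close: the pointwise lower bound on the ball. As you correctly observe, with $\sigma,\lambda$ fixed and the ball radius exactly $t^{b}$, one has $|x|/\mu_1(t)\leq \log t$ on $B_{t^b}$, so by \eqref{weight} the factor $\psi_{\sigma}'\big(\tfrac{x}{\mu_1(t)}\big)$ is only bounded below by a negative power of $t$, and your first inequality with a constant $c_{\sigma,\lambda}$ is false. Your attempted repair is not a proof: conservation of mass gives no pointwise information, \cite{MMPP} is not where the fix lives, and "lower-bounding only on $B_{\mu_1(t)}(0)$" proves a statement about the smaller ball $B_{t^b/\log t}$, not the claimed one, unless you add the missing re-parametrization. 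The missing (and simple) idea — which is what the paper means by "for suitable $\sigma$ and $\lambda$, $q>1$ close to $1$ and $b$ slightly smaller if necessary" — is that the conclusion is wanted for every $b<2/3$, so given the target exponent $b$ you may run the whole functional argument with an exponent $b'\in(b,2/3)$ (choosing $q>1$ close enough to $1$ that $b'\leq \tfrac{2}{2+q}$). Then for $t$ large, $t^{b}\leq \sigma\, t^{b'}/\log t$ and $t^{b}\leq \lambda\,\big(t^{b'}/\log t\big)^{q}$, so on $B_{t^{b}}$ one has $\psi_{\sigma}'\big(\tfrac{x}{\mu_1(t)}\big)\equiv 1$ and $\phi_{\lambda}\big(\tfrac{x}{\mu_1^{q}(t)}\big)\equiv\lambda$ (no loss at all), while the prefactor is unaffected because $\mu(t)\mu_1(t)=t\log t$ is independent of the exponent. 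With that adjustment your integration-in-time argument closes the proof; without it, the proof as written does not establish the stated bound on $B_{t^b}$. (Also, the $\log^{(1-b)/b}$ rate in \eqref{EQ2} comes from the pigeonhole step in the proof of Theorem \ref{L2ILW}, not from this bookkeeping.)
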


\begin{proof}
 From the definition, $\mu(t)\mu_1(t)=t\log t$ and a straightforward computation involving the properties of the function $\phi$, it follows that
    \begin{equation*}
    \frac{1}{\mu_1(t)\mu(t)}\int_{B_{t^b}}u^2(x,t)\,dx \leq \frac{1}{\mu_1(t)\mu(t)}\int_{\mathbb{R}}u^2\psi_{\sigma}'\left( \frac{{x}}{\mu_1(t)}\right)\phi_{\lambda}\left( \frac{{x}}{\mu_1^q(t)}\right) \,dx, 
    \end{equation*}
for suitable $\sigma$ and $\delta$, whenever $q> 1$ is chosen sufficiently close to 1 and $b$ slightly smaller if necessary. Lemma \ref{BoundL1} implies that
\begin{equation}\label{l43-1}
 \int_{\{t \gg 1\}}\frac{1}{\mu_1(t)\mu(t)}\int_{B_{t^b}}u^2(x,t)\,dxdt \leq -\mathcal{I}(t) + \int_{\{t \gg 1\}}|h(t)|\,dt.
\end{equation}
The first term on the right hand side of inequality \eqref{l43-1} is bounded because of  $b \leq \frac{2}{2 + q} < \frac{2}{3}$ and the last one is bounded by the proof of Lemma \ref{BoundL1}. This
completes the proof of the lemma. 
 
\end{proof}

Now we are ready to prove Theorem \ref{L2ILW}.
        
\subsection{Proof of Theorem  \ref{L2ILW}}

Since the function $\frac{1}{t\log t} \notin L^1(B^c_r(1))$, from  the previous lemma, we can ensure that there exists a sequence $(t_n) \to \infty$, such that 
\begin{equation*}
 \lim_{n \to \infty}\int_{B_{(t_n)}^b}u^2(x, t_n)\,dx = 0.
\end{equation*}
Therefore, $0$ is an accumulation point and using that $u^2 \geq 0$ we can conclude the result.

%%%%%%%%%%%%%%%%%%%%%%%%%%%%%%%%%%%%%%%%%%%%%
\section{Sketch Proof of Theorem \ref{energy-ILW}}
%%%%%%%%%%%%%%%%%%%%%%%%%%%%%%%%%%%%%%%%%%%%%

\begin{lemma}\label{ILWasymp}
Let u $\in C(\mathbb{R}: H^{\frac{1}{2}}(\mathbb{R}))\cap L^{\infty}(\mathbb{R}: H^{\frac{1}{2}}(\mathbb{R}))$ the solution
of the IVP associated to \eqref{ILW} given in \cite{IfSa}. Then, there exists a constant $C> 0$ such that
\begin{equation}\label{l61-2}
 \int_{\{ t \gg 1 \}}\frac{1}{t\log t}\int_{B_{t^b}(0)} | q(\partial_x)u(x,t)|^2\,dxdt \leq C,
\end{equation}
where $q(\partial_x)$ is defined in \eqref{a7}.
\end{lemma}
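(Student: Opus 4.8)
The plan is to mimic the virial computation used for Lemma~\ref{intl2bounded} (see Lemma~\ref{BoundL1}), but now applied to the ``energy density'' $u^2 + |q(\partial_x)u|^2$, or rather, to produce the extra piece $|q(\partial_x)u|^2$ one differentiates an auxiliary functional built from $q(\partial_x)u$ (or from $u\,q(\partial_x)^2$, i.e.\ $-u\,\mathcal{L}(\partial_x)\partial_x u$, which is morally $uu_x$ plus smoothing) against the same weight $\psi_\sigma(x/\mu_1(t))\phi_\lambda(x/\mu_1^q(t))$. Concretely, first I would set
\[
\mathcal{J}(t):=\frac{1}{\mu(t)}\int_{\mathbb{R}} q(\partial_x)u(x,t)\,\Theta\!\Big(\frac{x}{\mu_1(t)},\frac{x}{\mu_1^q(t)}\Big)\,dx,
\]
where $\Theta$ abbreviates a primitive/weight with the same structure as in \eqref{NC-1} so that $\partial_x\Theta \simeq \psi_\sigma'\phi_\lambda$ up to lower-order terms. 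As in Lemma~\ref{Functionalbounded}, $\mathcal{J}(t)$ is bounded in time because $\|q(\partial_x)u(t)\|_2\le \|J^{1/2}u(t)\|_2\lesssim \|u_0\|_{1/2,2}$ (using \eqref{a9}, $q(\xi)\sim\sqrt{\xi}$, so $q(\partial_x):H^{1/2}\to L^2$), combined with the decay of $\mu_1^{q/2}(t)/\mu(t)$ guaranteed by \eqref{bmrelation}.

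Next I would compute $\frac{d}{dt}\mathcal{J}(t)$ using the equation \eqref{ILW}. Applying $q(\partial_x)$ to \eqref{ILW} gives $\partial_t(q(\partial_x)u) + q(\partial_x)\mathcal{L}(\partial_x)\partial_x^2 u\cdot(\text{sign/const}) + q(\partial_x)(uu_x)=0$; since $\mathcal{L}(\partial_x)\partial_x = -(\mathcal{T}\partial_x^2+\partial_x)$ has symbol $i\Omega_1$ and $\Omega_1'=q\,q$, the dispersive term, after one integration by parts against the weight, produces exactly a term of the form $\frac{1}{\mu_1\mu}\int |q(\partial_x)u|^2\,\psi_\sigma'\phi_\lambda\,dx$ — this is the local smoothing gain of the ``$q$-derivative,'' and it is the quantity we want to bound — plus commutator remainders controlled by Lemma~\ref{est-1} and Lemma~\ref{est-3} (the estimates \eqref{ilw-e16}, \eqref{est-2}), which cost powers of $1/(\mu_1 t)$ and are harmless. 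The nonlinear contribution $-\frac{1}{\mu(t)}\int q(\partial_x)(uu_x)\,\Theta\,dx$ is handled by writing $q(\partial_x)(uu_x)=u\,q(\partial_x)u_x+[q(\partial_x),u]u_x$; the commutator is bounded in $L^2$ using Lemma~\ref{lem1} / Lemma~\ref{est-3}-type fractional Leibniz bounds with $\|u\|_{H^{1/2}}$, and for the main term one integrates by parts moving $\partial_x$ onto $u$ and the weight, so each resulting term carries either $\|u\|_\infty\lesssim\|u\|_{1/2+,2}$ (by Sobolev) times $\|q(\partial_x)u\|_2^2$ against $\psi_\sigma'\phi_\lambda/\mu_1$, or a harmless $\frac{1}{\mu(t)}$-weighted remainder; all such remainders are collected into a function $h(t)\in L^1(\{t\gg 1\})$ exactly as in the proof of Lemma~\ref{BoundL1}, using the same arithmetic on $b,q$ from \eqref{bmrelation}. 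Summing the $u^2$-identity of Lemma~\ref{BoundL1} with this new one yields
\[
\frac{1}{\mu_1(t)\mu(t)}\int_{B_{t^b}(0)}\!\!\big(u^2+|q(\partial_x)u|^2\big)(x,t)\,dx \le C\frac{d}{dt}\big(\mathcal{I}(t)+\mathcal{J}(t)\big)+\tilde h(t),\quad \tilde h\in L^1(\{t\gg1\}),
\]
and integrating in time over $\{t\gg 1\}$ (where $\mu_1\mu=t\log t$) gives \eqref{l61-2}, since $\mathcal{I}+\mathcal{J}$ is bounded.

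The main obstacle I expect is controlling the nonlinear term at the low regularity $H^{1/2}$: the operator $q(\partial_x)$ behaves like a half-derivative at high frequency, so $q(\partial_x)(uu_x)$ involves roughly $3/2$ derivatives on a product of two $H^{1/2}$ functions, which is not directly estimable by $\|u\|_{H^{1/2}}$ alone. The resolution is that one never needs $\|q(\partial_x)(uu_x)\|_2$ globally — inside the functional it is paired against a bounded, rapidly decaying weight and its $x$-derivatives, so after integration by parts the worst surviving term is $\int u_x\,q(\partial_x)u\,\Theta$-type, and moving the remaining half-derivative via the commutator estimate \eqref{est-2b} (or \eqref{comm-go-1} with one factor in $L^\infty$) leaves only $\|u\|_\infty$ or $\|u\|_{H^{1/2}}$ times $\|q(\partial_x)u\|_2$, which are finite by hypothesis; the quadratic-in-$q(\partial_x)u$ term coming with the small coefficient $1/(\mu_1 t)$ can, if needed, be absorbed into the left-hand side exactly as $A_2$ was absorbed into $A_{1,2}$ in \eqref{l42-10}. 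Verifying that every remainder indeed lands in $L^1(\{t\gg1\})$ under \eqref{bmrelation} is the bookkeeping heart of the proof, and it mirrors \eqref{l42-4}, \eqref{l42-9}, \eqref{l42-10}, \eqref{l42-11}, \eqref{l42-12} line by line.
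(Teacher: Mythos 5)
There is a structural gap at the heart of your argument. Your functional $\mathcal{J}(t)=\frac{1}{\mu(t)}\int q(\partial_x)u\,\Theta\,dx$ is \emph{linear} in $q(\partial_x)u$, so when you differentiate in time and use the equation the dispersive part stays linear: since $\lop\partial_x$ is skew symmetric and commutes with $q(\partial_x)$, it simply moves onto the weight, giving a term of the form $-\frac{1}{\mu}\int q(\partial_x)u\,\lop\partial_x\Theta\,dx$, which is a bounded remainder and \emph{not} the claimed quantity $\frac{1}{\mu\mu_1}\int|q(\partial_x)u|^2\,\psi_{\sigma}'\phi_{\lambda}\,dx$. Recall that in Lemma \ref{BoundL1} the positive quadratic term $A_{1,2}$ comes from the Burgers nonlinearity ($\int u\partial_xu\,\psi_{\sigma}\phi_{\lambda}\,dx=-\tfrac12\int u^2\,\partial_x(\psi_{\sigma}\phi_{\lambda})\,dx$), not from the linear part; in your setting this mechanism only yields $\tfrac12\int u^2\,\partial_x q(\partial_x)\Theta\,dx$, which is quadratic in $u$ rather than in $q(\partial_x)u$ and carries the nonlocal, non sign-definite weight $\partial_x q(\partial_x)\Theta$. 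Hence your scheme never produces the local term you need. The paper instead uses the \emph{quadratic} (virial) functional $\mathcal{J}(t)=\frac{1}{\mu(t)}\int u^2\,\psi_{\sigma}(x/\mu_1(t))\,dx$ as in \eqref{l61-3}: the contribution $\frac{2}{\mu}\int u\,\lop\partial_xu\,\psi_{\sigma}\,dx$ reduces to $-\frac{1}{\mu}\int u\,[\lop\partial_x;\psi_{\sigma}]u\,dx$, and Lemma \ref{est-1} together with $\Omega'=q\cdot q$ and the symmetrization of $q(\partial_x)$ via Lemma \ref{est-3} produces exactly $-\frac{c}{\mu\mu_1}\int\phi_{\sigma}(x/\mu_1)|q(\partial_x)u|^2\,dx$ with the favorable sign, plus remainders in $L^1(\{t\gg1\})$; this is precisely the mechanism of the virial identity in Lemma \ref{virial-right}, which your proposal bypasses.

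A second gap concerns the nonlinear term at the stated regularity: you control it with $\|u\|_{\infty}\lesssim\|u\|_{\frac12^{+},2}$, but the hypothesis only gives $u\in L^{\infty}(\R:H^{1/2}(\R))$, which does not bound $\|u(t)\|_{L^\infty_x}$. With the quadratic functional the nonlinear contribution is $\frac{2}{3\mu\mu_1}\int u^3\,\phi_{\sigma}(x/\mu_1)\,dx$, and the paper treats it by localizing with the partition $\{\zeta_n\}$ and applying the Gagliardo--Nirenberg estimate of Lemma \ref{GNSinequality}, which gives $\int|u|^3\phi_{\sigma}\lesssim\int u^2\phi_{\sigma}$ with a constant depending on $\|u\|_{L^{\infty}_tH^{1/2}}$, and then invokes the already established Lemma \ref{intl2bounded} to conclude that this contribution lies in $L^1(\{t\gg1\})$. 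Thus the local $L^2$ decay is used \emph{inside} the proof to tame the cubic term, not merely added to the new identity at the end as in your plan; as written, neither the production of the $|q(\partial_x)u|^2$ term nor the absorption of the nonlinearity goes through.
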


\begin{proof}
 Consider the functional 
    \begin{equation}\label{l61-3}
{\mathcal{J}}(t) := \frac{1}{\mu(t)}\int_{\mathbb{R}}u^2(x,t)\psi_{\sigma}\left(\frac{x}{\mu_1(t)} \right)\,  dx.
    \end{equation}
 where $\mu(t)$ and $\mu_1(t)$ were defined in \eqref{parameters}.

Differentiating \eqref{l61-3} yields
\begin{equation}\label{l61-4}
\begin{split}
     \frac{d}{dt}{\mathcal{J}}(t) &=- \frac{\mu'(t)}{\mu^2(t)}\int_{\mathbb{R}}u^2(x,t)\,\psi_{\sigma}\left(\frac{x}{\mu_1(t)} \right)  \,dx\\
      &\hskip10pt +\frac{2}{\mu(t)}\int_{\mathbb{R}}u(x,t)\partial_tu(x,t)\,\psi_{\sigma}\left(\frac{x}{\mu_1(t)} \right)  \, dx\\
     &\hskip10pt - \frac{\mu_1'(t)}{\mu(t)\mu_1(t)}\int_{\mathbb{R}}u^2(x,t)\,\phi_{\sigma}\left(\frac{x}{\mu_1(t)} \right)\left(\frac{x}{\mu_1(t)} \right)\, dx\\
     &= A(t)+B(t)+C(t).
\end{split}
\end{equation}
    
Combining the properties $\mu(t)$ and $\mu_1(t)$, the conservation of mass, and using \eqref{parameters}, it follows that
    \begin{equation}\label{l61-5}
    |A(t)|+|C(t)| \lesssim_{\sigma} \frac{\|u_0\|_{L^2}^2}{t^{2-b}\log^2t}.
    \end{equation}
Thus, the terms $A(t)$, $C(t)$ are integrable in $\{t \gg 1\}$.

%Next, we turn our attention into $\tilde{\mathcal{I}}_3(t)$. Thus,
%\begin{equation}\label{l61-6}
% \begin{aligned}
% |\tilde{\mathcal{I}}_{3}(t)| \leq& \Big| \frac{\mu_1'(t)}{\mu(t)\mu_1(t)} \Big| \|u(t)\|_{L^2}^2\Big\| \phi_{\sigma}\left( \frac{\cdot}{\mu_1(t)}\right)\left( \frac{\cdot}{\mu_1(t)}\right) \Big\|_{L^{\infty}} \\
% \lesssim_{\sigma} & \frac{\|u_0\|_{L^2}^2}{t^{(2-b)}\log^2(t)}.
% \end{aligned} 
%\end{equation}
%As before, we obtain $\tilde{\mathcal{I}}_{3} \in L^1(\{t \gg 1\})$. 

Regarding $B(t)$, we use the equation in \eqref{ILW} and integrate by parts to write
\begin{equation}\label{l61-7a}
\begin{split}
B(t) &=  \frac{2}{\mu(t)}\int_{\mathbb{R}}u\,\big(\lop\partial_x u\big)\,\psi_{\sigma}\Big(\frac{x}{\mu_1(t)} \Big)  dx \\
&\hskip15pt + \frac{2}{3\mu(t)\mu_1(t)}\int_{\mathbb{R}}u^3\phi_{\sigma}\Big(\frac{x}{\mu_1(t)} \Big)  dx \\
&= B_1(t) + B_2(t).
\end{split}
\end{equation}
    
Using that the symbol of the operator $\lop$ is even and integration by parts it follows that 
that
    
\begin{equation}\label{l61-7b}
\begin{split}
B_1(t)  &=  \frac{2}{\mu(t)}\int \lop\Big(u \psi_{\sigma}\Big(\frac{x}{\mu_1(t)} \Big)\Big)\,\partial_x u\, dx\\
&= -\frac{2}{\mu(t)}\int \lop\partial_x\Big(u \psi_{\sigma}\Big(\frac{x}{\mu_1(t)} \Big)\,u\, dx\\
&=-\frac{2}{\mu(t)}\int u \psi_{\sigma}\Big(\frac{x}{\mu_1(t)} \Big) \lop\partial_xu\,dx\\
&\hskip15pt-\frac{2}{\mu(t)}\int u\,[\lop\partial_x; \psi_{\sigma}] u\,dx.
\end{split}
\end{equation}

Hence
\begin{equation}\label{l61-7c}
\begin{split}
B_1(t) &= -\frac{1}{\mu(t)}\int u\,[\lop\partial_x; \psi_{\sigma}] u\,dx.
\end{split}
\end{equation}

From Lemma \ref{est-1}

\begin{equation}\label{l61-7}
\begin{split}
B_1(t) &=-\frac{c}{\mu(t)\mu_1(t)}\,\int u  \phi_{\sigma}\Big(\frac{x}{\mu_1(t)} \Big) \,\Omega_{\delta}'(\partial_x)u\,dx\\ 
&\hskip15pt-\frac{c}{\mu(t)}\,\int u R_1(\cdot)u\,dx\\
&= -\frac{c}{\mu(t)\mu_1(t)}\,\int u\phi_{\sigma}\Big(\frac{x}{\mu_1(t)} \Big)\,q(\partial_x)q(\partial_x)u\,dx \\
&\hskip15pt- \frac{c}{\mu(t)}\,\int u R_1(\cdot)u\,dx\\
&= -\frac{c}{\mu(t)\mu_1(t)}\,\int q(\partial_x)\Big(u \phi_{\sigma}\Big(\frac{x}{\mu_1(t)} \Big)\Big)\,q(\partial_x)u\,dx \\
&\hskip15pt-\frac{c}{\mu(t)}\,\int u R_1(\cdot)u\,dx\\
&=- \frac{c}{\mu(t)\mu_1(t)}\,\int  \phi_{\sigma}\Big(\frac{x}{\mu_1(t)} \Big)\,q(\partial_x)u \,q(\partial_x)u\,dx\\ 
&\hskip15pt - \frac{c}{\mu(t)\mu_1(t)}\,\int u\,(q(\partial_x) \,[q(\partial_x); \phi_{\sigma}\Big(\frac{x}{\mu_1(t)} \Big)]u)\,dx\\
&\hskip15pt- \frac{c}{\mu(t)}\,\int u R_1(\cdot)u\,dx\\
&= B_{1,1}(t)+B_{1,2}(t)+B_{1,3}(t).
\end{split}
\end{equation}

We observe that $B_{1,1}(t)$ is the term we want to estimate.

To estimate $B_{1,3}(t)$ we use the Cauchy-Schwarz inequality, the estimate \eqref{ilw-e16} in Lemma \ref{est-1}, the
conservation of mass and the definitions of $\mu(t)$ and $\mu_1(t)$ to yield
\begin{equation}\label{est-b13}
\begin{split}
|B_{1,3}(t)| &\le \frac{c}{\mu(t) \mu_1(t)^{\frac52}}\|\phi''\|^{1/2}_2\|\phi'''\|_2^{1/2}\|u_0\|_2^2\\
&\lesssim_{\sigma} \frac{(\log t)^{1/2}}{t^{1+\frac32b}} \in L^1(\{t\gg1\}).
\end{split}
\end{equation}

Employing the Cauchy-Schwarz inequality, Lemma \ref{est-3}, the conservation of mass, and the properties of $\mu(t)$ and $\mu_1(t)$ 
we obtain
\begin{equation}\label{est-b12}
\begin{split}
|B_{1,2}(t)| &\le \frac{c}{\mu(t)\mu_1(t)^2\mu_1(t)^{1/2}} \|u_0\|_{2}^2\|\phi_{\sigma}''\|_2^{1/2}
\Big(\|\phi_{\sigma}'\|_2^{1/2}+\|\phi_{\sigma}'''\|_2^{1/2})\\
&\lesssim_{\sigma} \frac{(\log t)^{1/2}}{t^{1+\frac32b}} \in L^1(\{t\gg1\}).
\end{split}
\end{equation}

\vspace{.5cm}

Finally, notice that by Lemma \ref{GNSinequality}
    \begin{equation}\label{l61-12}
    \begin{aligned}
    & \int_{\mathbb{R}}|u|^3 \phi_{\sigma}\Big(\frac{x}{\mu_1(t)} \Big)dx  \\
    &  \leq \sum_{n \in \mathbb{Z}}\int_{\mathbb{R}}(|u|\,\zeta_n)^3 \phi_{\sigma}\Big(\frac{x}{\mu_1(t)} \Big)dx  \\
    & \leq \sum_{n \in \mathbb{Z}}\|u\, \zeta_n\|^3_{L^3}\Big(\sup_{x \in [n, n+ 1]} \phi_{\sigma}\Big(\frac{x}{\mu_1(t)} \Big)\Big)  \\
    & \lesssim \sum_{n \in \mathbb{Z}}\|u\,\zeta_n\|^2_{L^2}\|D^{1/2}_x(u\,\zeta_n)\|_{L^2}\Big(\sup_{x \in [n, n+ 1]} \phi_{\sigma}\Big(\frac{x}{\mu_1(t)} \Big)\Big).  
    \end{aligned}
    \end{equation}
   
Moreover, by Lemma \ref{lem1} and hypothesis,
    \begin{equation}\label{l61-13}
    \begin{aligned}
     \|D^{1/2}_x(u\zeta_n)\|_{L^2}& \lesssim \left\|D^{1/2}_xu(t)\right\|_{L^2}\|\zeta_n\|_{L^{\infty}} + \|u(t)\|_{L^2}\|D^{1/2}_x\zeta_n\|_{L^{\infty}} \\
     & \lesssim \|u(t)\|_{H^{1/2}(\mathbb{R})} \lesssim \|u\|_{L_t^{\infty}H^{1/2}}.
     \end{aligned}
     \end{equation}
  
Combining these estimates we deduce that
\begin{equation*}\label{l61-14}
 \int_{\mathbb{R}}|u(x,t)|^3 \phi_{\sigma}\Big(\frac{x}{\mu_1(t)} \Big)dx\\
  \lesssim  \sum_{n \in \mathbb{Z}}\|u\,\zeta_n\|^2_{L^2}\Big(\sup_{x \in [n, n+ 1]} \phi_{\sigma}\Big(\frac{x}{\mu_1(t)} \Big)\Big).
\end{equation*}
A similar analysis to that given in Lemma 4.1 in  \cite{MMPP} (see also \cite{KM}) yields
\begin{equation}\label{l61-15}
 \int_{\mathbb{R}}|u(x,t)|^3 \phi_{\sigma}\Big(\frac{x}{\mu_1(t)} \Big)dx \lesssim \int_{\mathbb{R}}|u(x,t)|^2 \phi_{\sigma}\Big(\frac{x}{\mu_1(t)} \Big)\,dx .
\end{equation}

Using the properties of the function $\phi$ in \eqref{weight} for suitable $\lambda$ and $\sigma$  we can apply Lemma \ref{intl2bounded} to deduce
that $B_3(t) \in L^1(\{t \gg 1\})$. 

\medskip

Collection the information in \eqref{l61-4}, \eqref{l61-5}, \eqref{est-b13}, \eqref{est-b12} and \eqref{l61-15}
we deduce that
\begin{equation*}
\frac{1}{t\log t}\int_{B_{t^b}} |q(\partial_x)u(x,t)|^2\,dxdt\le \frac{d}{dt}\mathcal{J}(t)+g(t),
\end{equation*}
where  $\mathcal{J}(t)$ is bounded and $g(t)\in L^1(\{t\gg 1\})$.

A similar analysis as the one implemented in the proof of Theorem \ref{L2ILW} yields the desired result.  
    
\end{proof}

The reminder of the proof of Theorem \ref{energy-ILW} uses a similar argument as the proof of Theorem \ref{L2ILW}, so we will omit it.

\vspace{1cm}

\subsection{Sketch Proof of Corollary \ref{corL2ILW}}

\begin{proof}[Sketch Proof of Corollary \ref{corL2ILW}]

The proof of this result follows closely the argument employed to establish Theorem \ref{L2ILW}. Thus we will
describe only the new elements in the proof. 

We consider the functional 
\begin{equation*}
\mathcal{I}_{\rho}(t)=\frac{1}{\mu(t)}\int u(x,t)\,\psi_{\sigma}\left(\frac{x-\rho(t)}{\mu_1(t)}\right)\phi_{\lambda}\left(\frac{x-\rho(t)}{\mu_1^q(t)}\right)\,dx
\end{equation*}
where $\rho(t)=\pm t^m$, $m$ as in the statement of the corollary, $\mu(t)$ and $\mu_1(t)$ defined as in \eqref{parameters}, and
$\psi_{\sigma}$ and $\phi_{\lambda}$ defined as above.

\medskip 

Same analysis as in the proof of Lemma \ref{Functionalbounded} yields
\begin{equation*}
\sup_{t\gg 1} | \mathcal{I}_{\rho}(t)| <\infty.
\end{equation*}

We then get a similar inequality as \eqref{NC-2} in Lemma \ref{BoundL1}, that is,
\begin{equation*}
\begin{aligned}
& \frac{1}{\mu_1(t) \mu(t)}\int u^2(x,t)\,\psi_{\sigma}'\left(\frac{x-\rho(t)}{\mu_1(t)}\right)\phi_{\sigma}\left(\frac{x-\rho(t)}{\mu_1^q(t)}\right)\,dx
\\
&\qquad  \le 4\frac{d}{dt}\mathcal{I}_{\rho}(t)+h_{\rho}(t),
\end{aligned} 
\end{equation*}
where $h_{\rho}(t)\in L^1(\{t\gg1\})$. In addition to the terms previously handle in the proof of Lemma  \ref{BoundL1}, we must estimate two new extra terms, 
\[
- \frac{\rho'(t)}{\mu_1(t)\mu(t)}\int_{\mathbb{R}} u(x,t) \psi_{\sigma}'\left( \frac{x-\rho(t)}{\mu_1(t)} \right) \phi_{\lambda}\left( \frac{x-\rho(t)}{\mu_1^q(t)} \right)\,dx=A(t)
\]
and 
\[
- \frac{\rho'(t)}{\mu_1^q(t)\mu(t)}\int_{\mathbb{R}} u(x,t) \psi_{\sigma}\left( \frac{x-\rho(t)}{\mu_1(t)} \right) \phi_{\lambda}'\left( \frac{x-\rho(t)}{\mu_1^q(t)} \right)\,dx=B(t).
\]
This can be done by using the Cauchy-Schwarz inequality and the mass conservation to lead to
\begin{equation*}
\begin{split}
|A(t)+B(t)| &\le \left|\frac{\rho'(t)}{\mu_1^{1/2}(t)\mu(t)} \right| \|u_0\|_{L^2}\|\psi_{\sigma}'\|_{L^2}  \|\phi_{\lambda}\|_{L^\infty}\\
&\hskip10pt + \left|\frac{\rho'(t)}{\mu_1^q(t)\mu(t)}\right| \|u_0\|_{L^2} \|\psi_{\sigma}\|_{L^{\infty}}  \|\phi_{\lambda}'\|_{L^2}\\
& \lesssim_{\sigma,\lambda, m} \frac{1}{t^{(4-2m -b)/2 }\log^{3/2}t}+\frac{1}{t^{(4 -2b -2m + bq)/2}\log^{(4-q)/2}t}.
\end{split}
\end{equation*}

We note that the last terms are in $L^1(\{t\gg 1\})$ since for the first term $m \leq 1 - \frac{b}{2}$ and for the second one $m \leq 1 - \frac{b}{2} < 1 - \frac{b(1 - q)}{2}$, respectively.

\medskip

From this point on the utilised argument to establish Theorem \ref{L2ILW} can be applied to complete the proof of Corollary \ref{corL2ILW}.

\end{proof}

\vspace{0.5cm}
\noindent{\bf Acknowledgements.} The authors are grateful to Argenis Mendez for his valuable comments and suggestions which improve the presentation of this work. F.L. was partially supported by CNPq grant 310329/2023-0 and FAPERJ grant E-26/200.465/2023. 
\medskip

\section*{conflict of interest statement}

On behalf of all authors, the corresponding author states that there is no conflict of interest. 

\section*{Data availability}

We do not analyze or generate any datasets, because our work proceeds within a theoretical and mathematical approach.

\end{document}